\pgfplotsset{compat=1.18}
\newcommand\bb{\mathbb}
\newcommand\NN{\bb{N}}
\newtheorem{theorem}{Theorem}
\newtheorem{definition}{Definition}
\newtheorem{lemma}{Lemma}
\newtheorem{proposition}{Proposition}
\newtheorem{claim}{Claim}
\newtheorem{problem}{Problem}
\newtheorem{corollary}{Corollary}
\newtheorem{conjecture}{Conjecture}
\title{A sharp spectral extremal result for\\
general non-bipartite graphs}
\author{John Byrne\thanks{University of Delaware, Department of Mathematical Sciences, \texttt{jpbyrne@udel.edu}}}
\date{\today}
\begin{document}

\maketitle

\begin{abstract}
    For a graph family $\mathcal F$, let $\mathrm{ex}(n,\mathcal F)$ and $\mathrm{spex}(n,\mathcal F)$ denote the maximum number of edges and maximum spectral radius of an $n$-vertex $\mathcal F$-free graph, respectively, and let $\mathrm{EX}(n,\mathcal F)$ and $\mathrm{SPEX}(n,\mathcal F)$ denote the corresponding sets of extremal graphs. Wang, Kang, and Xue showed that if $r\ge 2$ and $\mathrm{ex}(n,F)=e(T_{n,r})+O(1)$ then $\mathrm{SPEX}(n,\mathcal F)\subseteq\mathrm{EX}(n,\mathcal F)$ for $n$ large enough. Fang, Tait, and Zhai extended this result by showing if $e(T_{n,r})\le\mathrm{ex}(n,\mathcal F)<e(T_{n,r})+\lfloor n/2r\rfloor$ then $\mathrm{SPEX}(n,\mathcal F)\subseteq\mathrm{EX}(n,\mathcal F)$ for $n$ large enough, and asked for the maximum constant $c(r)$ such that $\mathrm{ex}(n,\mathcal F)\le e(T_{n,r})+(c(r)-\varepsilon)n$ guarantees such containment. In this paper we determine $c(r)$ exactly for all $r\ge 3$.
\end{abstract}

\hspace{.3in} \textbf{Keywords:} Spectral Tur\'an problem, Tur\'an graph

\hspace{.3in} \textbf{2020 MSC:} 05C35, 05C50

\section{Introduction}

Let $\mathcal F$ be a family of graphs. For $n\in\mathbb{N}$, let $\mathrm{ex}(n,\mathcal F)$ ($\mathrm{spex}(n,\mathcal F)$) denote the maximum number of edges (spectral radius of adjacency matrix) of an $n$-vertex graph with no subgraph isomorphic to a graph in $\mathcal F$. Let $\mathrm{EX}(n,\mathcal F)$ ($\mathrm{SPEX}(n,\mathcal F)$) be the set of graphs attaining the maximum number of edges (spectral radius) among $\mathcal F$-free graphs. The problem of determining $\mathrm{ex}(n,\mathcal F)$ is called the \textit{Tur\'an problem} and the problem of determining $\mathrm{spex}(n,\mathcal F)$ is called the \textit{spectral Tur\'an problem} for $\mathcal F$. The latter problem, introduced systematically by Nikiforov \cite{nikiforov2010spectral}, has since grown quite popular; for an excellent survey on this topic we refer the reader to \cite{li2021survey}. In particular there have been many papers addressing the question of when we can guarantee that $\mathrm{SPEX}(n,\mathcal F)=\mathrm{EX}(n,\mathcal F)$. Some early results in this direction are the spectral Tur\'an theorem \cite{nikiforov2007bounds} which shows that $\mathrm{SPEX}(n,K_{r+1})=\mathrm{EX}(n,K_{r+1})$ and the work on odd cycles \cite{nikiforov2008spectral} which shows that $\mathrm{SPEX}(n,C_{2k+1})=\mathrm{EX}(n,C_{2k+1})$ for large enough $n$. However, this phenomenon does not always occur; for example $\mathrm{SPEX}(n,C_{2k})\cap\mathrm{EX}(n,C_{2k})=\emptyset$ and in fact the extremal families are very different structurally \cite{cioabua2022spectral2,nikiforov2010spectral,zhai2020spectral,zhai2012proof}. Recently Liu and Ning \cite{liu2023spectral} proposed the problem of determining all graphs $F$ for which $\mathrm{SPEX}(n,F)\subseteq\mathrm{EX}(n,F)$ for large enough $n$.

In this paper we ask for a condition on the value of $\mathrm{ex}(n,\mathcal F)$ which implies some relation between $\mathrm{EX}(n,\mathcal F)$ and $\mathrm{SPEX}(n,\mathcal F)$. In \cite{byrne2024general} this question was addressed when $\mathcal F$ contains a bipartite graph, so here we focus on the case where all graphs in $\mathcal F$ are non-bipartite. The first positive result in this direction was proved by Wang, Kang, and Xue \cite{wang2023conjecture}, answering a conjecture of Cioab\u a, Desai, and Tait \cite{cioabua2022spectral}. Recently, the result of \cite{wang2023conjecture} was extended by Fang, Tait, and Zhai \cite{fang2024turan}. Below, $\chi(\mathcal F)$ denotes $\min\{\chi(F):F\in\mathcal F)\}$.

\begin{theorem}[Fang, Tait, Zhai \cite{fang2024turan}] \label{non bipartite paper result}
    Let $n$ be sufficiently large and let $\mathcal F$ be a finite graph family satisfying $\chi(\mathcal F)=r+1\ge 3$ and $\mathrm{ex}(n,\mathcal F)<e(T_{n,r})+\left\lfloor\frac{n}{2r}\right\rfloor$. Then $\mathrm{SPEX}(n,\mathcal F)\subseteq\mathrm{EX}(n,\mathcal F)$.
\end{theorem}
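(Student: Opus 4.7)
Let $G \in \mathrm{SPEX}(n,\mathcal F)$ attain the spectral maximum, with Perron eigenvector $\mathbf{x} \ge 0$ normalised so that $\max_v x_v = 1$, and set $\lambda := \lambda(G)$. Because $\chi(T_{n,r}) = r < r+1 = \chi(\mathcal F)$, the Turán graph $T_{n,r}$ is itself $\mathcal F$-free, so
\[ \lambda \ \ge\ \lambda(T_{n,r}) \ \ge\ \left(1-\tfrac{1}{r}\right)n - 1. \]
The plan is to force $G$ into the shape ``balanced $r$-partite graph plus a bounded number of within-part edges'' and then observe that the edge hypothesis pins down the number of those extra edges exactly.

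I would first invoke a spectral stability theorem of Nikiforov type (as refined in \cite{wang2023conjecture}) to obtain an $r$-partition $V_1 \cup \cdots \cup V_r$ of $V(G)$, chosen to minimise the number of within-part edges, with $|V_i| = n/r + o(n)$ and $\sum_i e(G[V_i]) = o(n^2)$. Next I would bootstrap using the eigenvalue equation $\lambda x_v = \sum_{u \sim v} x_u$ together with the sharp lower bound on $\lambda$: this promotes the $o$-estimates to $O(1)$ bounds, giving $\big||V_i| - n/r\big| = O(1)$, $a := \sum_i e(G[V_i]) = O(1)$, and $b := \#\{\text{missing cross-part edges}\} = O(1)$, and forcing all but $O(1)$ ``typical'' vertices to satisfy $x_v = 1 - O(1/n)$.

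Next, combining $e(G) = e(T_{|V_1|,\ldots,|V_r|}) + a - b$ with the hypothesis $e(G) \le \mathrm{ex}(n,\mathcal F) < e(T_{n,r}) + \lfloor n/(2r)\rfloor$ and a further eigenvector argument pinning the partition as balanced to within $\pm 1$ yields $a - b < \lfloor n/(2r)\rfloor$. A Rayleigh-quotient computation on near-Turán graphs shows that restoring a missing cross-part edge between two typical vertices strictly increases $\lambda$ (by approximately $2x_u x_v / \|\mathbf{x}\|_2^2$, of order $1/n$); provided one verifies the restoration preserves $\mathcal F$-freeness, extremality forces $b = 0$. Hence $G = T_{|V_1|,\ldots,|V_r|} + M$ for a within-part edge set $M$ with $|M| = a < \lfloor n/(2r)\rfloor$, and since $\lambda$ is strictly increasing in $|M|$ among graphs of this shape, $|M|$ must be the largest value keeping $G$ $\mathcal F$-free, namely $\mathrm{ex}(n,\mathcal F) - e(T_{n,r})$. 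This gives $e(G) = \mathrm{ex}(n,\mathcal F)$.

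\textbf{Main obstacle.} The delicate steps are the joint upgrade of $a$ and $b$ to $O(1)$ and the subsequent elimination of $b$. A priori, restoring a missing cross-part edge could create a forbidden $F \in \mathcal F$; ruling this out requires using $\chi(\mathcal F) = r+1$ together with the clean near-Turán structure to confine any such $F$ to the exceptional low-weight vertex set, whose smallness -- forced by the hypothesis $\mathrm{ex}(n,\mathcal F) < e(T_{n,r}) + \lfloor n/(2r)\rfloor$ -- precludes embedding $F$. Handling the low-weight vertices, which individually contribute negligibly to $\lambda$ but could collectively host hidden edges, is where the real work of the eigenvector bootstrap lies.
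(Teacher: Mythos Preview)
This theorem is quoted from \cite{fang2024turan} and is not proved in the present paper; it is used as a black box (the case $k(\mathcal F)=1$ of the main theorem is explicitly deferred to it). So there is no in-paper proof to compare against directly. That said, the machinery of Section~2.2 is essentially a refinement of the argument in \cite{wang2023conjecture,fang2024turan}, and it is worth flagging where your sketch departs from that machinery in ways that would not go through.

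There are two genuine gaps. First, the eigenvector bootstrap by itself cannot promote $a=\sum_i e(G[V_i])$ to $O(1)$. The eigenvalue equation will give you that every vertex has degree $(1-1/r)n+O(1)$ and that within-part degrees are bounded, but a matching of size $\Theta(n)$ inside one part is fully compatible with all those estimates. What actually forces $a=O(1)$ is a combinatorial consequence of the hypothesis: $\mathrm{ex}(n,\mathcal F)<e(T_{n,r})+\lfloor n/(2r)\rfloor$ implies that $M_m+T_{m(r-1),r-1}$ contains some $F\in\mathcal F$ for large $m$ (else embedding a near-perfect matching in one part of $T_{n,r}$ would violate the bound). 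One then shows, via a common-neighbourhood count, that any large matching inside some $V_i$ of $G$ extends to such a configuration. This is exactly the role of the ``bad graphs'' and ``automatic joining'' lemmas in the paper; without that step the bootstrap stalls well short of $a=O(1)$.

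Second, your mechanism for eliminating the missing cross edges --- confining any created copy of $F$ to the exceptional low-weight set --- is not the correct picture. When you restore a cross edge $uv$ between two typical vertices and a copy of $F$ appears, that copy lives almost entirely among typical vertices. The argument that works is a \emph{replacement} trick: since $u\in V_i$ is typical and $N_F(u)\subseteq V\setminus V_i$ has bounded size, the common neighbourhood $N_G(N_F(u))\cap V_i$ is almost all of $V_i$, so some $u'\ne u$ in it can be swapped for $u$ to exhibit $F\subseteq G$ already, a contradiction. This is the content of the ``ordinary vertex'' lemmas in the paper, and it is what legitimises adding back cross edges. Your final step (``$\lambda$ is strictly increasing in $|M|$'') also needs care: you are comparing two different bounded edge sets $M$ and $M'$, not $M\subset M'$, so you need the uniform estimate $x_v=1+O(n^{-1})$ to conclude $\lambda$ tracks $e(G)$ up to $O(n^{-2})$, as in the paper's bounded modification lemma.
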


Motivated by this result, the authors of \cite{fang2024turan} posed the following problem.

\begin{problem} \label{problem 1}
    Determine the maximum constant $c=c(r)$ such that for any finite graph family $\mathcal F$ with $\chi(\mathcal F)=r+1\ge 3$, for all positive $\varepsilon<c$ and sufficiently large $n$, $\mathrm{ex}(n,\mathcal F)\le e(T_{n,r})+(c-\varepsilon)n$ implies $\mathrm{SPEX}(n,\mathcal F)\subseteq\mathrm{EX}(n,\mathcal F)$.
\end{problem}

Thus, \autoref{non bipartite paper result} shows that $c(r)\ge 1/2r$ for every $r$. In this paper we determine $c(r)$ exactly for every $r\ge 3$. When $r=2$, we prove that $c(2)\le 1/3$ and conjecture that this is the correct value. These results also improve the upper bound $c(2)\le 3/2$ obtained by considering the odd wheel $W_{15}$ \cite{yuan2020extremal,cioabua2022spectral} (we are not aware of existing upper bounds for larger $r$).

The authors of \cite{fang2024turan} also asked the following.

\begin{problem} \label{problem 2}
    For $r\ge 2$ and $n$ sufficiently large, if a finite graph family $\mathcal F$ satisfies that $e(G)=e(T_{n,r})+\Theta(n)$ and $T_{n,r}\subseteq G$ for every $G\in\mathrm{EX}(n,\mathcal F)$, determine whether or not we have $\mathrm{SPEX}(n,\mathcal F)\subseteq\mathrm{EX}(n,\mathcal F)$.
\end{problem}

We answer this question in the negative for all $r\ge 3$ (see the proof of \autoref{counterexample 1}).

\subsection{Notation and definitions} \label{subsection notation}

We write $K_n$ for the complete graph on $n$ vertices; $K_{V_1,\ldots,V_r}$ for the complete multipartite graph with partite sets $V_1,\ldots,V_r$; $T_{n,r}$ for the $r$-partite Tur\'an graph on $n$ vertices; $P_\ell$ for a path on $\ell$ vertices; and $M_n$ for a matching on $n$ vertices (or almost matching if $n$ is odd). For a graph family $\mathcal F$, let $\chi(\mathcal F)=\min\{\chi(F):F\in\mathcal F\}$. Given a graph $G$, we write $V(G)$ and $E(G)$ for its vertex set and edge set, respectively; given a set $S\subseteq V(G)$, we write $N_G(S):=\{v\in V(G):v\sim u\text{ for all }u\in S\}$, or just $N(S)$ when the underlying graph is understood; $G[U]$ for the induced subgraph on $U$, $E[U]$ for $E(G[U])$ and $e(U)$ for $|E(U)|$; $G[U,W]$ for the bipartite subgraph induced by $U$ and $W$; $E(U,W)$ for the set of edges with one endpoint in $U$ and one endpoint in $W$ (where possibly $U\cap W\neq\emptyset$); $A(G)$ for the adjacency matrix of $G$ and $\lambda(G)$ for the spectral radius of $A(G)$. For a vertex $v\in V(G)$ and a set $S\subseteq V(G)$, we write $N_S(v):=N(v)\cap S$ and $d_S(v)$ for $|N_S(v)|$; we write $G-v$ for the graph obtained by deleting the vertex $v$; and $d'(v)$ for the number of walks of length two starting from $v$. For $n\in\NN$, let $U(G,n)$ be a graph on $n$ vertices consisting of as many disjoint copies of $G$ as possible with isolated vertices in the remainder. We write $G\cup H$ and $G+H$ for the disjoint union and join, respectively, of $G$ and $H$ and $k\cdot G$ for the union of $k$ disjoint copies of $G$.

\subsection{Main results} \label{section main results}

Our main result extends \autoref{non bipartite paper result} in the case $r\ge 3$. Let $\alpha$ be the largest root of the quadratic
$$f(x)=-\frac{1}{r-1}x^2+\left(2+\frac{5}{r-1}-\frac{4}{r}\right)x+\left(\frac{4}{r}-\frac{6}{r-1}\right),$$
and let $c_1(r)=\frac{\alpha-1}{\alpha r},$ i.e.
$$c_1(r)=\frac{1}{r}\left(1-\left[\frac{r-1}{2}\left(2+\frac{5}{r-1}-\frac{4}{r}+\sqrt{\left(2+\frac{5}{r-1}-\frac{4}{r}\right)^2-\frac{4}{r-1}\left(\frac{6}{r-1}-\frac{4}{r}\right)}\right)\right]^{-1}\right).$$

\begin{theorem} \label{main theorem}
Let $\mathcal F$ be a finite graph family satisfying $\chi(\mathcal F)=r+1\ge 4$ and $\mathrm{ex}(n,\mathcal F)\le e(T_{n,r})+Qn$ for large enough $n$, where $Q<c_1(r)$.
Then we have $\mathrm{SPEX}(n,\mathcal F)\subseteq\mathrm{EX}(n,\mathcal F)$ for large enough $n$.
\end{theorem}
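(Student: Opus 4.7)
\textbf{Proof plan for \autoref{main theorem}.}

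The overall strategy is to take a spectral extremal graph $G^*\in\mathrm{SPEX}(n,\mathcal F)$, perform a stability-plus-cleanup argument to reduce it to a ``near-Tur\'an'' graph, and then compare $\lambda(G^*)$ against the spectral radius of a genuine edge-extremal graph in $\mathrm{EX}(n,\mathcal F)$. The constant $c_1(r)$ is designed to be exactly the threshold above which this spectral comparison can fail.

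First I would set up the basic quantitative stability. Let $x$ be the Perron eigenvector of $G^*$ normalised by $\max x_v = 1$ and let $u^*$ attain the maximum. Since $T_{n,r}$ is $\mathcal F$-free we have $\lambda(G^*)\ge\lambda(T_{n,r})=(1-1/r)n-O(1)$, so by standard spectral counting (à la Nikiforov) $G^*$ has $e(T_{n,r})+O(n)$ edges, hence by Erd\H{o}s--Simonovits stability and the $\chi(\mathcal F)=r+1$ hypothesis, $G^*$ differs from $T_{n,r}$ in $o(n^2)$ edges. Choose an $r$-partition $V(G^*)=V_1\cup\cdots\cup V_r$ maximising the number of cross edges; then $|V_i|=n/r+o(n)$ for each $i$.

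Next I would carry out a cleanup, modelled on the approach of Wang--Kang--Xue and Fang--Tait--Zhai. Call a vertex $v\in V_i$ \emph{bad} if it either has many non-neighbours in some $V_j$, $j\ne i$, or many neighbours in $V_i$; using the Perron equation $\lambda\, x_v = \sum_{w\sim v} x_w$ one shows bad vertices must have eigenvector weight noticeably below the cross-part average, and a weight-transfer / rank-one perturbation argument (add all missing cross-edges at $v$, delete all same-part edges at $v$; or delete $v$ entirely and duplicate $u^*$) forces only $O(1)$ bad vertices. Deleting these, the resulting graph $G'$ is $\mathcal F$-free (because any forbidden subgraph in $G'+$(added cross-edges) would already appear in $G^*$), sits in a near-Tur\'an partition, has $\lambda(G')\ge\lambda(G^*)-o(1)$, and the total number of within-part edges of $G'$ is bounded by $\mathrm{ex}(n,\mathcal F)-e(T_{n,r})\le Qn$.

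Finally I would quantify how much a within-part edge can raise the spectral radius. Let $W_1,\dots,W_r$ denote the shrunken parts of $G'$ (of sizes $n_i$) and let $H$ be obtained from the blow-up of the complete $r$-partite graph by adding within-part edges forming some graph on $W_1\cup\cdots\cup W_r$. By the symmetry of the Perron eigenvector on $T_{n,r}$, within-part edges contribute at rate roughly $x_u x_v / (x_{u^*}\sum_w x_w)$ per edge, and the worst case is to place all extra edges in a single part, concentrated on a small set of heavy vertices; this reduces the problem to bounding $\lambda(K_{n_1,\dots,n_r}+ K_m)$ for a suitable $m=\Theta(Qn/\text{part size})$. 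Writing the characteristic equation of the quotient matrix of this graph (two vertex classes inside the heavy part and one class for each other part) yields a quadratic in the ratio $x_{\text{heavy}}/x_{\text{light}}$; the gain over $\lambda(T_{n,r})$ grows at a rate whose leading term in $Q$ is exactly the bracketed expression in $c_1(r)$. Comparing this with the spectral radius of a graph in $\mathrm{SPEX}(n,\mathcal F)\cap\mathrm{EX}(n,\mathcal F)$ (which exists up to the stability conclusion, being a Tur\'an graph plus $\le Qn$ edges placed in the extremal way) shows that whenever $Q<c_1(r)$ the edge-extremal side wins, forcing $G^*\in\mathrm{EX}(n,\mathcal F)$.

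The hard step is the last one: one must (i) identify the \emph{worst-case} placement of the extra within-part edges from the spectral point of view, (ii) compare it with the corresponding extremal edge graph, and (iii) solve the resulting quadratic eigenvalue equation sharply to recover exactly $c_1(r)$ rather than a weaker constant. The cleanup and stability pieces are relatively standard once the extremal comparison is isolated; the non-routine calculation is the Perron-eigenvector analysis of $K_{n_1,\dots,n_r}$ plus a dense cluster in one part, which is where the square root in the formula for $c_1(r)$ originates.
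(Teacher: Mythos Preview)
Your stability-and-cleanup outline is roughly the right shape, but the final and decisive step misidentifies the extremal structure, and consequently your explanation of where $c_1(r)$ comes from is not correct.

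The paper does \emph{not} reduce to a dense cluster $K_m$ inside one part. Instead it introduces an integer parameter $k=k(\mathcal F)$, defined as the largest order of a tree $T_k$ for which $U(T_k,m)+T_{m(r-1),r-1}$ stays $\mathcal F$-free, and proves (both for edge-extremal and for spectral-extremal graphs) that the graph is $K_{W_1,\dots,W_r}$ with a disjoint union of trees of order exactly $k$ embedded in one part $W_1$, plus $O(1)$ edge edits and with $|w_i-w_j|\le 1$. The spectral comparison is then between two such tree-embedded graphs, and the key lemma is a \emph{tree comparison lemma}: among all embeddings of order-$k$ trees, stars maximise and paths minimise $\lambda$, with
\[
\lambda(G_{\mathrm{star}})-\lambda(G_{\mathrm{path}})=\Bigl[\tfrac{k-5+6/k}{r-1}\Bigr]\tfrac{1}{n}+O(n^{-2}),
\]
the numerator $k-5+6/k$ being $(\,k(k-1)-(4k-6)\,)/k$, i.e.\ the per-vertex difference in the number of $2$-walks between $K_{1,k-1}$ and $P_k$. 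A second ingredient is the spectral cost of moving one vertex between parts, which contributes a term $\tfrac{4(k-1)}{kr}\cdot\tfrac{1}{n}$. The inequality that must hold for the edge-extremal graph $H$ to beat $G^*$ spectrally is
\[
2-\frac{k-5+6/k}{r-1}-\frac{4(k-1)}{kr}>0,
\]
and solving this quadratic in $k$ is exactly what produces the square root in $c_1(r)$. Your proposed quotient-matrix computation for $K_{n_1,\dots,n_r}+K_m$ is a different calculation and would yield a different constant; moreover a large clique in one part would typically contain graphs in $\mathcal F$, so it is not even a candidate structure for $G^*$.

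Two smaller issues: your cleanup is too coarse (the paper does not delete bad vertices but re-assigns them to the correct part and then proves fine balance lemmas $|w_i-w_j|\le 1$ via careful $O(n^{-2})$ eigenvector estimates), and your final sentence compares with ``a graph in $\mathrm{SPEX}(n,\mathcal F)\cap\mathrm{EX}(n,\mathcal F)$'', which is circular since that intersection being nonempty is what you are trying to prove; the correct comparison is with an $H\in\mathrm{EX}(n,\mathcal F)$ whose structure is pinned down by the edge-extremal structure lemma.
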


The appearance of the constant $c_1(r)$ is explained as follows. The `worst case scenario' in the proof of \autoref{main theorem} occurs when an edge extremal graph $H$ is obtained from $T_{n,r}$ by embedding paths $P_k$ in a \textit{large} part along with one extra edge, while a spectral extremal graph $G$ is obtained from $T_{n,r}$ by embedding stars $K_{1,k-1}$ in a \textit{small} part. (The $k$-vertex trees with smallest and largest spectral radius are $P_k$ and $K_{1,k-1}$, respectively, so intuitively they should also give the extremal contributions to the spectral radius of a larger graph in which they are embedded.) We will show that
$$\lambda(H)-\lambda(G)=\left[2-\frac{k-5+6/k}{r-1}-\frac{4(k-1)}{kr}\right]\frac{1}{n}+O(n^{-2}).$$
The first term inside the brackets accounts for the extra edge; the second term comes from the difference between embedding stars versus paths; and the last term comes from the difference between embedding the trees in a small part versus a large part. In order to avoid this potential counterexample to \autoref{main theorem}, we require that the bracketed expression be positive, which is equivalent to $k<\alpha$. The number of edges embedded in the large part of $H$ is $\frac{k-1}{kr}n+O(1)<c_1(r)n$. Regarding the other possibility $k>\alpha$, we can always construct an $\mathcal F$ which gives rise to this `worst case scenario,' which proves that the constant $c_1(r)$ in \autoref{main theorem} is sharp up to some integrality considerations:

\begin{proposition} \label{counterexample 1}
    Suppose that $r\ge 3$, $k\in\NN$, and $k>\alpha$.
    Then there is a finite family of graphs $\mathcal F$ such that $\chi(F)=r+1$, $\mathrm{ex}(n,\mathcal F)\le e(T_{n,r})+\frac{k-1}{kr}n+O(1)$, and $\mathrm{SPEX}(n,\mathcal F)\cap\mathrm{EX}(n,\mathcal F)=\emptyset$ for an infinite sequence of $n$. Thus, $c(r)\le\frac{k-1}{kr}.$
\end{proposition}

Combining \autoref{main theorem} and \autoref{counterexample 1} allows us to determine $c(r)$ for $r\ge 3$:

\begin{corollary} \label{corollary} For every $r\ge 3$, we have
$$c(r)=\frac{\lceil\alpha\rceil -1}{\lceil\alpha\rceil r}.$$
\end{corollary}

As a consequence of \autoref{corollary}, we have $c(r)\sim1/r$ as $r\to\infty$.

The assumption in \autoref{main theorem} that $r\ge 3$ is required, as shown by the following result.

\begin{proposition} \label{counterexample 2}
    There exists a finite graph family $\mathcal F$ such that $\chi(F)=3$, $\mathrm{ex}(n,\mathcal F)\le e(T_{n,2})+\frac{1}{3}n$, and $\mathrm{SPEX}(n,\mathcal F)\cap\mathrm{EX}(n,\mathcal F)=\emptyset$ for an infinite sequence of $n$.
\end{proposition}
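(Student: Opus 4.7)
The plan is to mirror the construction in the proof of \autoref{counterexample 1}, specialized to $r=2$ and $k=3$. Assume $n\equiv 0\pmod 6$ and set
\[
G^\star \;:=\; T_{n,2} \cup U(P_3,n/2),
\]
where the $n/6$ disjoint copies of $P_3$ are placed inside one part of $T_{n,2}$. Then $\chi(G^\star)=3$ and $e(G^\star)=e(T_{n,2})+n/3$. I would then design a finite family $\mathcal F$ of $3$-chromatic graphs together with an $\mathcal F$-free graph $G^{\star\star}$ such that (i) $G^\star\in\mathrm{EX}(n,\mathcal F)$ and $\mathrm{ex}(n,\mathcal F)\le e(T_{n,2})+n/3$, and (ii) $e(G^{\star\star})<e(G^\star)$ but $\lambda(G^{\star\star})>\lambda(G^\star)$, so any spectral maximizer lies outside $\mathrm{EX}(n,\mathcal F)$.

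\textbf{Choice of $\mathcal F$.} By analogy with \autoref{counterexample 1}, take $\mathcal F$ to be a finite collection of $3$-chromatic ``blocker'' graphs whose absence forces the following structure theorem: any $\mathcal F$-free graph $G$ with $e(G)\ge e(T_{n,2})+n/3-O(1)$ admits a nearly balanced bipartition $V(G)=A\sqcup B$ for which the in-part graph on $A$ is a subgraph of a disjoint union of $P_3$'s, and hence has at most $2|A|/3= n/3$ edges. Concretely, one forbidden graph would cap each vertex's in-part degree (ruling out $K_{1,c}$ inside a part for a large fixed constant $c$), and a second would forbid $P_4$-subgraphs inside a part. Together these force the in-part graph to be a disjoint union of copies of $P_1,P_2,P_3$, giving $\mathrm{ex}(n,\mathcal F)=e(T_{n,2})+n/3+O(1)$ with $G^\star$ extremal.

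\textbf{Spectral competitor and comparison.} The competitor $G^{\star\star}$ concentrates a few of the extra edges on a single high-degree vertex while staying under the $\mathcal F$-imposed cap. As in \autoref{counterexample 1}, one natural recipe is to delete a bounded number of $P_3$'s from $G^\star$ and replace them, inside the same part, by a star $K_{1,t}$ of fixed size $t$ (with $3\le t<c$) together with isolated vertices; a parallel recipe is to shift one vertex across the bipartition and plant a short star on it. Writing $G=T_{n,2}+H$ with $H$ supported in one part, standard eigenvector-equation analysis gives the expansion
\[
\lambda(G) \;=\; \tfrac{n}{2} \,+\, \tfrac{2e(H)}{n} \,+\, \tfrac{2\sum_{v}d_H(v)^2}{n^2} \,+\, O\!\left(n^{-3}\right),
\]
so the second-order correction is governed by $\sum_v d_H(v)^2$. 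The star-like replacement raises $\sum_v d_H(v)^2$ by a margin that outweighs the small loss in $e(H)$ after multiplication by $1/n$, so $\lambda(G^{\star\star})-\lambda(G^\star)>0$ for all sufficiently large $n$ in the residue class $0\pmod 6$. Since $e(G^{\star\star})<e(G^\star)$, the graph $G^{\star\star}$ is $\mathcal F$-free but not edge-extremal; meanwhile the spectral-extremal $\mathcal F$-free graph has spectral radius at least $\lambda(G^{\star\star})>\lambda(G^\star)$, so by the structure theorem from Step~2 it cannot coincide with any edge-extremal graph. This yields $\mathrm{SPEX}(n,\mathcal F)\cap\mathrm{EX}(n,\mathcal F)=\emptyset$.

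\textbf{Main obstacle.} The delicate point is the design of $\mathcal F$: it must be restrictive enough to pin $\mathrm{ex}(n,\mathcal F)=e(T_{n,2})+n/3+O(1)$ on the nose (with $G^\star$ the essentially unique extremal graph) yet permissive enough to admit $G^{\star\star}$. This calls for an Erd\H{o}s--Simonovits style stability argument classifying near-extremal $\mathcal F$-free graphs, paralleling the case analysis in \autoref{counterexample 1}; the bound $c(r)\sim 1/r$ exploited there degenerates at $r=2$, so the slack available in that proof is not present and each inequality must be verified directly for $r=2$ without it. Once $\mathcal F$ is fixed, the spectral comparison is a routine perturbation calculation from the expansion above.
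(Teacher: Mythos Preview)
Your plan has two genuine gaps. First, the family you describe does not do what you claim: forbidding $K_{1,c}$ (for a large constant $c$) and $P_4$ inside a part does \emph{not} force the in-part components to lie in $\{P_1,P_2,P_3\}$, since every star $K_{1,t}$ with $3\le t<c$ is $P_4$-free. Packing one part with disjoint copies of $K_{1,c-1}$ then gives $(1-1/c)\,n/2$ in-part edges, so $\mathrm{ex}(n,\mathcal F)>e(T_{n,2})+n/3$, contradicting the hypothesis you need. Conversely, any family that genuinely caps the in-part graph at disjoint $P_3$'s (e.g.\ by forbidding every $4$-vertex tree in a part) automatically excludes your competitor $G^{\star\star}$ as well; the tension you flag as the ``main obstacle'' is not a technicality but an actual contradiction in the design. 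Second, the spectral comparison is at the wrong order of magnitude: in your own expansion, $e(H)$ enters at order $n^{-1}$ while $\sum_v d_H(v)^2$ enters at order $n^{-2}$, so if $e(G^{\star\star})<e(G^\star)$ then $\lambda(G^{\star\star})-\lambda(G^\star)=-2\bigl(e(G^\star)-e(G^{\star\star})\bigr)/n+O(n^{-2})<0$ for large $n$, regardless of how the degrees are redistributed. A bounded star cannot beat the $P_3$-packing spectrally while losing even one edge.

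The paper's construction avoids both issues. It takes $\mathcal F=\{K_4\}\cup\{T_4+(m\cdot K_1):T_4\text{ a tree on }4\text{ vertices}\}$, so every $4$-vertex tree in a part is forbidden and the in-part components genuinely are $P_1,P_2,P_3$. Crucially it chooses $n=2p$ with $p\equiv 1\pmod 3$ (not $p\equiv 0\pmod 3$ as you do): then the balanced $K_{p,p}$ wastes one vertex in its $P_3$-packing, and the competitor $G$ is the \emph{unbalanced} $K_{p-1,p+1}$ with $P_3$'s packed perfectly in the smaller part of size $p-1\equiv 0\pmod 3$, giving $e(G)=e(H)-1$. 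The spectral gain comes from the change of part sizes, not from the in-part degree sequence, and is verified by direct substitution into the characteristic polynomials of the $3\times 3$ and $4\times 4$ quotient matrices of the natural equitable partitions of $G$, $H$, $H'$. Your ``parallel recipe'' of shifting a vertex across the bipartition is the right instinct, but it only works with the residue $p\equiv 1\pmod 3$ and with the competitor built from $P_3$'s, not stars.
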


Given \autoref{non bipartite paper result} and \autoref{counterexample 2} one can show that $c(2)\in\{1/4,1/3\}$. We conjecture that $c(2)=1/3$:

\begin{conjecture} \label{conjecture}
    Let $n$ be sufficiently large and let $\mathcal F$ be a finite graph family satisfying $\chi(F)=3$ and $\mathrm{ex}(n,\mathcal F)\le e(T_{n,2})+Qn$, where $Q<\frac{1}{3}$. Then for $n$ large enough we have $\mathrm{SPEX}(n,\mathcal F)\subseteq\mathrm{EX}(n,\mathcal F)$.
\end{conjecture}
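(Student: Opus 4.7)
Let $G^*\in\mathrm{SPEX}(n,\mathcal F)$ and let $\bo x$ be a positive Perron eigenvector of $A(G^*)$, normalized so that $\max_u x_u=1$. Since $T_{n,2}$ is $\mathcal F$-free (as $\chi(\mathcal F)=3$), we have $\lambda(G^*)\ge\lambda(T_{n,2})\ge n/2-1$. Erd\H{o}s--Simonovits stability then yields a partition $V(G^*)=V_1\cup V_2$ with $||V_i|-n/2|=o(n)$ and $G^*$ differing from $K_{V_1,V_2}$ in $o(n^2)$ edges. The plan is to adapt the framework developed for the precursor \autoref{non bipartite paper result} and for \autoref{main theorem}, pushing the admissible value of $Q$ up to the sharp threshold $1/3$ identified by \autoref{counterexample 2}.

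\textbf{Step 1: bounded defects.} The first phase shows there is an absolute constant $C=C(\mathcal F,Q)$ such that every vertex has at most $C$ neighbors inside its own part, and only $O(1)$ vertices are missing $C$ or more neighbors in the opposite part. The key tool is the standard Perron-vector swap: if $u\in V_i$ satisfies $d_{V_i}(u)>C$ (and $x_u$ is not too small), deleting all edges from $u$ inside $V_i$ and joining $u$ to all of $V_{3-i}$ strictly increases $\bo x^T A(G^*)\bo x/\bo x^T\bo x$, and by a careful choice of swap one checks the resulting graph remains $\mathcal F$-free using $\chi(\mathcal F)=3$. A second, similar swap controls the number of severely cross-deficient vertices.

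\textbf{Step 2: sharp edge/spectrum tradeoff.} Write $e(G^*)=|V_1||V_2|-m+t$, where $m$ counts missing cross edges and $t=e(G^*[V_1])+e(G^*[V_2])$ counts same-part edges, so the hypothesis $e(G^*)\le e(T_{n,2})+Qn$ becomes $t-m\le Qn+O(1)$. Comparing $\lambda(G^*)$ to $\lambda(G_0)$ for $G_0\in\mathrm{EX}(n,\mathcal F)$ via the Rayleigh quotient, and using the structural control from Step~1, one obtains a quadratic inequality relating $t$, $m$, and the eigenvector entries at the defected vertices. Each same-part edge contributes roughly $+2x_ux_v$ to $\bo x^T A(G^*)\bo x$ while each missing cross edge costs roughly $-2x_ux_v$; in the extremal regime these contributions balance at the ratio realized by the counterexample in \autoref{counterexample 2}, and the coefficient of $n$ in the resulting inequality becomes favorable precisely when $Q<1/3$.

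\textbf{Step 3: conclusion and main obstacle.} Assume for contradiction that $e(G^*)<\mathrm{ex}(n,\mathcal F)$. Then the structural description forces $G^*\subseteq T_{n,2}+H$ for some bounded graph $H$, so there is room to add an $\mathcal F$-free edge to $G^*$; one shows this addition strictly increases $\lambda$, contradicting $G^*\in\mathrm{SPEX}(n,\mathcal F)$. The main obstacle, and what distinguishes $r=2$ from $r\ge 3$, is that with only two parts every eigenvector swap has a unique target, so the rearrangement arguments used in \autoref{main theorem} become very tight: the same-part defect $t$ and the cross defect $m$ must be tracked jointly rather than bounded separately, and the borderline configurations that arise as $Q\to 1/3^{-}$ (those close to the graphs in \autoref{counterexample 2}) must be ruled out by a delicate analysis of $\bo x$ on the handful of exceptional vertices produced in Step~1. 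This last piece is where I expect the real work to lie.
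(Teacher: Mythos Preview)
The statement you are attempting to prove is a \emph{conjecture}: the paper does not prove it, and in the concluding remarks explicitly says that a solution ``potentially requires stronger techniques than those of this paper and those of \cite{wang2023conjecture}.'' So there is no paper proof to compare against; the relevant question is whether your outline supplies a genuine new idea or merely repackages the $r\ge 3$ argument.

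Your plan is essentially the same framework the paper uses to prove \autoref{main theorem}: spectral stability, Perron-vector swaps to bound defects, then a Rayleigh-quotient comparison to an edge-extremal $H$. The paper carries this out in detail for $r\ge 3$ and identifies exactly why it stalls at $r=2$. After the structure lemmas, everything comes down to comparing $\lambda(G)$ with $\lambda(H)$ for two nearby graphs built from $T_{n,r}$ plus embedded trees, and for $r\ge 3$ the difference is $\Theta(n^{-1})$ with a sign that can be read off from an explicit expression in $k$ and $r$. The paper notes (end of the concluding remarks) that in the borderline case $r=2$, $k(\mathcal F)=2$ the analogous computation gives $\lambda(H)-\lambda(G)=\Theta(n^{-2})$, so the first-order Rayleigh bookkeeping you describe in Step~2 yields $0$ to leading order and decides nothing. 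Your Step~2 claim that ``the coefficient of $n$ in the resulting inequality becomes favorable precisely when $Q<1/3$'' is therefore not something you can expect to hold: at the threshold configurations the coefficient of $n$ (equivalently, of $n^{-1}$ in $\lambda$) vanishes, and one must control the next-order term.

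Concretely, the missing idea is a mechanism to resolve these $\Theta(n^{-2})$ comparisons---either a second-order eigenvector expansion sharp enough to determine the sign uniformly over all admissible $\mathcal F$, or an entirely different argument that avoids the head-to-head $\lambda$ comparison. Your Step~3 correctly flags that the borderline configurations near the \autoref{counterexample 2} family are the crux, but ``delicate analysis of $\bo x$ on the handful of exceptional vertices'' does not go beyond what the paper already does; the exceptional-vertex estimates in the paper are exactly what produce the $O(n^{-2})$ error terms that swamp the signal here. Until you specify how to extract and control the $n^{-2}$ term, the plan as written is the $r\ge 3$ proof with the hard case deferred, not a proof of the conjecture.
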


\subsection{Applications} \label{subsection applications}

\autoref{main theorem} unifies several new and existing results. Taking $Q=\frac{1}{2r}$ in \autoref{main theorem}, one may obtain the case $r\ge 3$ of the previous results \cite{wang2023conjecture} and \cite{fang2024turan}. Using this, one can prove that $\mathrm{SPEX}(n,F)\subseteq\mathrm{EX}(n,F)$ for $n$ large enough when $F$ is a $(k,r)$\textit{-fan} obtained by intersecting $k$ copies of $K_r$ in a single vertex (where $r\ge 4$) \cite{cioaba2019spectral,desai2022spectral}. 

To obtain new applications of \autoref{main theorem}, we are interested in families $\mathcal F$ such that $e(T_{n,r})+\left\lfloor\frac{n}{2r}\right\rfloor\le\mathrm{ex}(n,\mathcal F)\le e(T_{n,r})+Qn$. One example is the graph $P_{2p+1}^p$ obtained by joining all vertices at distance at most $p$ in a path of length $2p+1$, where $p\ge 3$. Then Theorem 1.2 of \cite{yuan2022extremal} gives that $\mathrm{EX}(n,\mathcal F)$ contains a graph obtained from $T_{n,p}$ with a matching embedded in one part, hence \autoref{main theorem} implies that $\mathrm{SPEX}(n,P_{2p+1}^p)\subseteq\mathrm{EX}(n,P_{2p+1}^p)$ for $n$ large enough.

We are not aware of any other existing extremal results in this category. However, with the goal of constructing graphs $F$ for which we can show $\mathrm{ex}(n,F)=e(T_{n,r})+\frac{n}{2r}+O(1)$ we began exploring specific small graphs. Other than the path power discussed above, the simplest such $F$ we found is described as follows. The graph $F_1$ consists of two paths on four vertices $bcde$, $fghi$, with additional edges $bf,bg,cf,ch,dg,di,eh,ei$ and additionally one dominating vertex $a$.

\begin{figure}[!ht]
\centering
\resizebox{0.5\textwidth}{!}{%
\begin{circuitikz}
\tikzstyle{every node}=[font=\Large]
\draw [ fill={rgb,255:red,0; green,0; blue,0} ] (11.25,10.25) circle (0.25cm);
\draw [ fill={rgb,255:red,0; green,0; blue,0} ] (8.75,10.25) circle (0.25cm);
\draw [ fill={rgb,255:red,0; green,0; blue,0} ] (8.75,12.75) circle (0.25cm);
\draw [ fill={rgb,255:red,0; green,0; blue,0} ] (13.75,12.75) circle (0.25cm);
\draw [ fill={rgb,255:red,0; green,0; blue,0} ] (11.25,12.75) circle (0.25cm);
\draw [ fill={rgb,255:red,0; green,0; blue,0} ] (13.75,10.25) circle (0.25cm);
\node [font=\LARGE] at (13,9.75) {};
\node [font=\LARGE] at (13,9.75) {};
\node [font=\LARGE] at (13.5,9.25) {};
\draw [ fill={rgb,255:red,0; green,0; blue,0} , line width=1pt ] (12.5,15.25) circle (0.25cm);
\draw [ fill={rgb,255:red,0; green,0; blue,0} , line width=1pt ] (16.25,10.25) circle (0.25cm);
\draw [ fill={rgb,255:red,0; green,0; blue,0} , line width=1pt ] (16.25,12.75) circle (0.25cm);
\draw [line width=1pt, short] (8.75,12.75) -- (12.5,15.25);
\draw [line width=1pt, short] (12.5,15.25) -- (11.25,12.75);
\draw [line width=1pt, short] (12.5,15.25) -- (8.75,10.25);
\draw [line width=1pt, short] (12.5,15.25) -- (11.25,10.25);
\draw [line width=1pt, short] (12.5,15.25) -- (13.75,10.25);
\draw [line width=1pt, short] (12.5,15.25) -- (13.75,12.75);
\draw [line width=1pt, short] (12.5,15.25) -- (16.25,12.75);
\draw [line width=1pt, short] (12.5,15.25) -- (16.25,10.25);
\draw [line width=1pt, short] (8.75,12.75) -- (8.75,10.25);
\draw [line width=1pt, short] (8.75,10.25) -- (11.25,12.75);
\draw [line width=1pt, short] (8.75,12.75) -- (11.25,12.75);
\draw [line width=1pt, short] (8.75,12.75) -- (11.25,10.25);
\draw [line width=1pt, short] (8.75,10.25) -- (11.25,10.25);
\draw [line width=1pt, short] (11.25,12.75) -- (13.75,12.75);
\draw [line width=1pt, short] (11.25,10.25) -- (13.75,10.25);
\draw [line width=1pt, short] (11.25,12.75) -- (13.75,10.25);
\draw [line width=1pt, short] (11.25,10.25) -- (13.75,12.75);
\draw [line width=1pt, short] (13.75,12.75) -- (16.25,12.75);
\draw [line width=1pt, short] (16.25,12.75) -- (16.25,10.25);
\draw [line width=1pt, short] (16.25,10.25) -- (13.75,10.25);
\draw [line width=1pt, short] (13.75,12.75) -- (16.25,10.25);
\draw [line width=1pt, short] (13.75,10.25) -- (16.25,12.75);
\node [font=\Large] at (12.5,15.75) {\textit{a}};
\node [font=\Large] at (8.75,13.25) {\textit{b}};
\node [font=\Large] at (11.25,12.25) {\textit{c}};
\node [font=\Large] at (13.75,12.25) {\textit{d}};
\node [font=\Large] at (16.25,13.25) {\textit{e}};
\node [font=\Large] at (8.75,9.75) {\textit{f}};
\node [font=\Large] at (11.25,9.75) {\textit{g}};
\node [font=\Large] at (13.75,9.75) {\textit{h}};
\node [font=\Large] at (16.25,9.75) {\textit{i}};
\end{circuitikz}
}%
\caption{The graph $F_1$.}
\label{fig:my_label}
\end{figure}
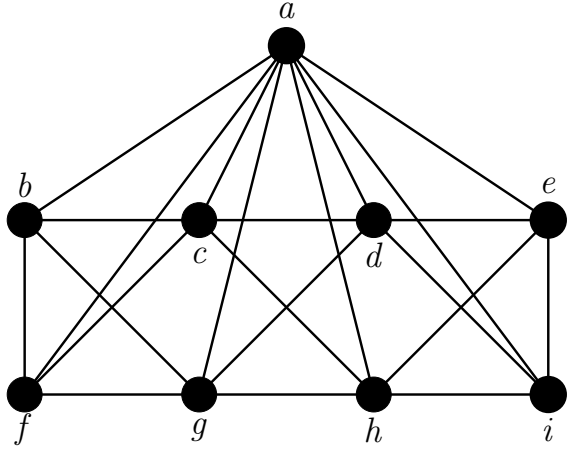

\begin{claim} \label{F1 properties}
We have the following properties of $F_1$.
\begin{itemize}
    \item[(i)] $\chi(F_1)=4$.
    \item[(ii)] The graph obtained from $T_{n,3}$ by adding an edge to two different parts contains a copy of $F_1$.
    \item[(iii)] The graph obtained from $T_{n,3}$ by embedding $P_3\cup P_2$ in part contains a copy of $F_1$.
    \item[(iv)] The graph obtained from $T_{n,3}$ by embedding a matching in one part does not contain a copy of $F_1$.
\end{itemize}
\end{claim}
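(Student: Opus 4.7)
The plan is to exploit the structure of $F_1$: an apex $a$ joined to three blocks of four vertices $\{b,c,f,g\}$, $\{c,d,g,h\}$, $\{d,e,h,i\}$ inducing $K_4-cg$, $C_4$, $K_4-dh$ respectively (the missing ``vertical'' edges being $cg$ and $dh$). For (i), $\{a,b,c,f\}$ already induces a $K_4$, giving $\chi(F_1)\ge 4$; the assignment $a\mapsto 1$, $b,d,h\mapsto 2$, $c,e,g\mapsto 3$, $f,i\mapsto 4$ is readily checked to be a proper $4$-coloring, so $\chi(F_1)=4$.

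For (ii) and (iii) I place $a$ in one part of $T_{n,3}$ and distribute the remaining eight vertices between the other two. For (ii) I use the bipartition $A=\{b,d,f,h\}$ versus $B=\{c,e,g,i\}$; one checks that $bf$ is the only intra edge of $A$, $ei$ the only intra edge of $B$, and all other bottom edges cross. Placing $A$ in the part carrying the added edge $u_1u_2$ (identified with $bf$), $B$ in the part carrying $v_1v_2$ (identified with $ei$), and $a$ in the third part supplies every edge of $F_1$ from the complete bipartite structure together with the two added edges. For (iii) I instead use $A=\{b,d,f,h,i\}$ versus $B=\{c,e,g\}$: the set $B$ is independent in $F_1$, while $A$ induces exactly the edges $bf$, $di$, $hi$, which form a $P_2\cup P_3$ (with $i$ the degree-two center) that can be matched to the embedded $P_3\cup P_2$ after relabeling.

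The main work is in (iv). Suppose $F_1\subseteq T_{n,3}+M$ with $M$ a matching in $V_1$, and let $B_i$ collect the vertices of $F_1$ mapped to $V_i$, so $B_2,B_3$ are independent in $F_1$ and $F_1[B_1]$ is a matching. I split on where $a$ lies. Because $a$ is adjacent to every other vertex of $F_1$, if $a\in B_2$ (symmetrically $B_3$) then $B_2=\{a\}$, and the bottom eight split into $B_1$ (matching) and $B_3$ (independent). I claim $c,g\in B_3$: if $c\in B_1$, its four bottom-neighbors $\{b,d,f,h\}$ contribute at most one to $B_1$, hence at least three to $B_3$; the edge $bf$ rules out $\{b,d,f,h\}\subseteq B_3$, so the triple is $\{b,d,h\}$ or $\{d,f,h\}$, and in either case tracking the intra-$B_1$ degree of the remaining member of $\{b,f\}$ forces $g\in B_3$, which places $bg$ or $dg$ inside $B_3$, a contradiction; the argument for $g$ is symmetric. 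With $c,g\in B_3$ we must have $d,h\in B_1$ (else $cd$ or $ch$ would lie in $B_3$); the only candidates for their intra-$B_1$ matching partners lie in $\{e,i\}$, but $ei$ is an edge so at most one of $e,i$ sits in $B_3$, and whichever of $e,i$ lies in $B_1$ becomes a common intra-$B_1$ neighbor of both $d$ and $h$, violating the matching condition. Finally, if $a\in B_1$, then adjacency of $a$ with every other vertex of $B_1$ combined with the matching condition forces $|B_1|\le 2$, so the bottom structure minus at most one vertex must be bipartite; but no single vertex meets all four triangles $\{b,c,f\}$, $\{b,f,g\}$, $\{d,e,i\}$, $\{e,h,i\}$, so this fails. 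The most delicate step is the bookkeeping in the first subcase, where the non-edges $cg$ and $dh$ must be tracked carefully against the matching constraint.
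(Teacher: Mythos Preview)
Your proof is correct. Parts (i)--(iii) are essentially the paper's argument (your bipartition in (iii), $\{b,d,f,h,i\}$ versus $\{c,e,g\}$, differs from the paper's $\{g,b,c,e,i\}$ versus $\{d,f,h\}$, but both work for the same reason).

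For (iv) the paper takes a shorter route: since $\{a,b,c,f\}$ induces a $K_4$, any embedding into $T_{n,3}$ plus a matching in $V_1$ must put exactly two of these four into $V_1$ as a matching edge; after symmetry this lets one assume $a\in V_3$ and $b\in V_1$, and then only two cases on the location of $f$ remain, each dispatched in a line (if $f\in V_1$ one forces $c,g\in V_2$, then $d,h\in V_1$, then $e,i\in V_2$, contradicting $e\sim i$; if $f\in V_2$ one gets the $P_3$ $g\!-\!b\!-\!c$ inside $V_1$). Your organization---branching first on the location of $a$, handling $a\in B_1$ via the two disjoint triangles $\{b,c,f\}$ and $\{d,e,i\}$, and for $a\in B_2$ arguing $c,g\in B_3$ by degree-counting---is a bit longer but has the virtue of making the case $a\in V_1$ explicit, which the paper's ``without loss of generality'' leaves to the reader. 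Both arguments ultimately pivot on the same structural facts (the missing edges $cg,dh$ and the two $K_4^-$ blocks at the ends), so the difference is organizational rather than conceptual.
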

\begin{proof}
    \begin{itemize}
        \item[(i)] One can check that $\chi(F_1-a)=3$ so $\chi(F_1)=4$.
        \item[(ii)] If the added edges are $u_1v_2$ and $v_1v_2$ then we find a copy of $F_1$ by identifying $bf$ with $u_1u_2$ and $ei$ with $v_1v_2$.
        \item[(iii)] If the added paths are $u_1u_2u_3$ and $v_1v_2$ then we find a copy of $F_1$ by identifying $gbc$ with $u_1u_2u_3$ and $ei$ with $v_1v_2$. 
        \item[(iv)] Suppose $F_1$ appears in this graph. Assume without loss of generality that the matching is embedded in $V_1$, $a\in V_3$, and $b\in V_1$. If $f\in V_1$ then $c,g\in V_2$ so $d,h\in V_1$ and so $e,i\in V_2$, contradicting $e\sim i$. If $f\in V_2$ then $c,g\in V_1$ so $gbc$ is a $P_3$ in $V_1$, a contradiction.
    \end{itemize}
\end{proof}

Facts (i), (ii), and (iii) give that the assumptions of \autoref{structure lemma 1} below are satisfied, hence the conclusion of \autoref{structure lemma 1} and (iv) give that $\mathrm{EX}(n,F_1)$ is obtained by embedding a matching in one part of $T_{n,3}$. This gives a value of $Q=1/2r$ in \autoref{main theorem}, and we conclude that $\mathrm{SPEX}(n,F_1)=\mathrm{EX}(n,F_1)$.

\section{Proof of \autoref{main theorem}}

Our overall strategy is as follows. In \autoref{section structure of edge extremal} we prove \autoref{structure lemma 1}, which provides general structural information about the graphs in $\mathrm{EX}(n,\mathcal F)$. In \autoref{section strucutre of spectral extremal}, we prove that the same general structural description also applies to graphs in $\mathrm{SPEX}(n,\mathcal F)$. The difference in spectral radius of two graphs $H,G$ satisfying this structural description is equal to $2(e(H)-e(G))/n+O(n^{-2})$, plus an error term which depends on what trees are embedded in $H$ and $G$ and in which part of their $r$-partition. We show that this error term is at most $\left[-\frac{k-5+6/k}{r-1}-\frac{4(k-1)}{kr}\right]\frac{1}{n}+O(n^{-2})$. If $e(G)=e(H)$ then $\mathrm{SPEX}(n,\mathcal F)\subseteq\mathrm{EX}(n,\mathcal F)$, while otherwise $e(G)<e(H)$ and we have
$$\lambda(H)-\lambda(G)\ge\left[2-\frac{k-5+6/k}{r-1}-\frac{4(k-1)}{kr}\right]\frac{1}{n}+O(n^{-2}).$$
The assumption $Q<c_1(r)$ in \autoref{main theorem} guarantees that the bracketed term is positive, giving a contradiction.

\begin{definition} \label{k value}
Given a graph family $\mathcal F$ with $\chi(F)=r+1\ge 3$ and $\mathrm{ex}(n,\mathcal F)\le e(T_{n,r})+Qn$ for large enough $n$ where $Q<1/r$, let $k=k(\mathcal F)$ be the maximum integer such that for some tree $T_k$ of order $k$, for all $m$, $U(T_k,m)+T_{m(r-1),r-1}$ is $\mathcal F$-free. 
\end{definition}

Since $Q<1/r$, $k$ is finite; more precisely, whenever $k|m$ we have $m\frac{k-1}{k}\le Qmr$, hence $k\le\lfloor 1/(1-Qr)\rfloor$.

\subsection{Structure of edge extremal graphs} \label{section structure of edge extremal}

In this subsection we prove the following lemma. Note that if $\mathcal F$ is as in \autoref{main theorem}, then $\mathcal F$ satisfies the assumptions of \autoref{structure lemma 1}. Our reason for using this lemma which is slightly more general than what is needed to prove \autoref{main theorem} is that it also helps us quickly determine the extremal graphs for the forbidden subgraphs appearing in our applications and counterexamples.

\begin{lemma} \label{structure lemma 1}
    Suppose $\mathcal F$ is a finite family of graphs, $\chi(\mathcal F)=r+1\ge 3$, and for $n$ large enough, any graph obtained by adding more than $Qnr$ edges to $T_{nr,r}$ is not $\mathcal F$-free, where $Q<1/r$. Then for $n$ large enough, $\mathrm{EX}(n,\mathcal F)$ contains a graph $G$ obtained from $T_{n,r}$ by embedding disjoint trees of some fixed order $k$ in one part, then making a bounded number of edge deletions and additions to the whole graph.
\end{lemma}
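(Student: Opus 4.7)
My plan is to combine a stability argument for $\chi(\mathcal F)=r+1$ families with a structural analysis driven by the maximality of $k$ from \autoref{k value}. First I would pick any $G^*\in\mathrm{EX}(n,\mathcal F)$, so that $e(T_{n,r})\le e(G^*)\le e(T_{n,r})+Qn$ with $Q<1/r$. The Erd\H{o}s--Simonovits stability theorem then supplies an $r$-partition $V(G^*)=V_1\cup\cdots\cup V_r$ with $o(n^2)$ intra-part edges and $o(n^2)$ missing crossing edges. I would refine this choice to minimize $\sum_i e(G^*[V_i])$, after which every $v\in V_i$ satisfies $d_{V_i}(v)\le d_{V_j}(v)$ for $j\ne i$, and the condition $Q<1/r$ balances the parts so that $|V_i|=n/r+O(1)$. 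Next I would upgrade this to nearly complete bipartite pieces: via a standard vertex-swap argument (replacing a degree-deficient vertex with a clone of a typical high-degree vertex strictly gains edges and contradicts extremality), there is a bounded set $B\subseteq V(G^*)$ such that every $v\in V_i\setminus B$ has $d_{V_j}(v)\ge|V_j|-C$ for all $j\ne i$, where $C$ depends only on $\mathcal F$.

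The crux of the proof is to control the structure of $G^*[V_i\setminus B]$ using the definition of $k$. Suppose for contradiction that some part, say $V_1$, contains an unboundedly growing number of connected components of size at least $k+1$. Each such component contains some tree of order $k+1$, and since there are only finitely many isomorphism types of trees on $k+1$ vertices, pigeonhole produces $m\to\infty$ disjoint copies of a single tree $T_{k+1}$ inside $V_1$. Combined with the nearly complete crossing structure, this embeds $U(T_{k+1},m)+T_{m(r-1),r-1}$ into $G^*$; by the maximality of $k$, this embedded graph contains some $F\in\mathcal F$, a contradiction. The same pigeonhole, applied to connected subgraphs of order at most $k$ that contain a cycle, rules out all but $O(1)$ non-tree components. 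Hence each $G^*[V_i\setminus B]$ is a forest of trees of order at most $k$, apart from $O(1)$ exceptional components.

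Finally I would consolidate: a swapping argument moves all but $O(1)$ non-crossing edges into a single part without decreasing the edge count or breaking $\mathcal F$-freeness. Since a tree of order $j$ contributes $(j-1)/j$ edges per vertex, a ratio maximized at $j=k$, edge-extremality then forces all but $O(1)$ components in that part to be trees of order exactly $k$; the bounded deviation is absorbed by the ``bounded number of edge deletions and additions'' allowed in the lemma statement. I expect the main obstacle to be the crux step above: invoking the maximality of $k$ requires not merely a single tree of order $k+1$ but $m$ disjoint copies together with a balanced complete $(r-1)$-partite graph on a disjoint vertex set, so one must choose the copies and partition carefully to ensure the leftover vertices genuinely support $T_{m(r-1),r-1}$ while simultaneously managing the bad set $B$.
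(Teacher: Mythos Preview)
Your plan tracks the paper's proof closely: stability, refine the partition, bound intra-part degrees, use pigeonhole on trees of order $k+1$ together with the nearly complete crossing structure to invoke the maximality of $k$, then force components to be trees of order exactly $k$. The crux you flagged is exactly the step the paper handles via an ``automatic joining'' lemma (any bounded subgraph in one part extends to $K+T_{m(r-1),r-1}$), so your instincts are right there.

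There is one genuine gap. Your ``swapping argument moves all but $O(1)$ non-crossing edges into a single part without \ldots breaking $\mathcal F$-freeness'' is doing real work you have not justified. You cannot simply relocate intra-part edges from $V_2$ into $V_1$ and assert $\mathcal F$-freeness; nor does your edge-density count $(j-1)/j$ rule out two parts each carrying $\Theta(n)$ edges in trees of order $k$. The paper handles this not by swapping but by a direct forbidden-subgraph argument: if two parts each contain unboundedly many edges, then (since intra-part degrees are bounded) each contains a matching of size $m$, and the nearly complete crossing structure then embeds $M_m+M_m+T_{m(r-2),r-2}$. This graph is obtained from $T_{mr,r}$ by adding $m$ edges, which violates the hypothesis $Q<1/r$ for large $m$. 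You will need this (or an equivalent) step before any consolidation makes sense.

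A smaller issue: your early claim that ``$Q<1/r$ balances the parts so that $|V_i|=n/r+O(1)$'' is premature. Stability only gives $|V_i|=n/r+o(n)$; the $O(1)$ balance is obtained at the end of the paper's argument by transferring ordinary (leaf or isolated) vertices between parts and checking that such transfers preserve $\mathcal F$-freeness and do not lose edges. Since the lemma only asks that \emph{some} extremal graph have the stated form, this final rebalancing is how one actually lands on $T_{n,r}$.
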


Let $k=k(\mathcal F)$ as in \autoref{k value}.

\begin{lemma} \label{bad graphs}
    For some large enough constant $m$, the following graphs are not $\mathcal F$-free:
    \begin{itemize}
        \item $K_{1,m}+T_{m(r-1),r-1}$
        \item $M_m+M_m+T_{m(r-2),r-2}$
        \item $(T_1\cup\cdots\cup T_m)+T_{m(r-1),r-1}$, for any trees $T_i$ of order $k+1$
        \item $(C_1\cup\cdots\cup C_m)+T_{m(r-1),r-1}$, for any cycles $C_i$ of order at most $k$
    \end{itemize}
\end{lemma}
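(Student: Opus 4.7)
I would split the four bullets into two groups. Bullets 1, 2, and 4 all admit an essentially identical edge-counting argument: each claimed graph contains a subset of exactly $mr$ vertices whose induced subgraph equals $T_{mr,r}$ plus at least $m-1$ extra edges embedded inside parts. Since $Qr<1$, the number of extras exceeds $Qmr$ for $m$ large, and the hypothesis on $\mathrm{ex}(n,\mathcal F)$ (taken with $n=m$) then supplies a copy of some $F\in\mathcal F$. Bullet 3 is the main obstacle and requires a separate appeal to the maximality of $k$, because trees of order $k+1$ are too sparse for edge counting to succeed.

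\textbf{The edge-counting bullets (1, 2, 4).} For bullet 1, delete any one leaf of the star in $K_{1,m}+T_{m(r-1),r-1}$; the resulting graph on $mr$ vertices is $T_{mr,r}$ with the $m-1$ surviving star edges inside a single part. For bullet 2, the graph already has $mr$ vertices and is directly $T_{mr,r}$ augmented by two matchings totalling at least $m-1$ edges embedded in two distinct parts. For bullet 4, I would exploit that cycles are $2$-regular: greedily take whole cycles from among $C_1,\ldots,C_m$ until the cumulative vertex count first reaches or exceeds $m$ (possible since $\sum_i|C_i|\ge 3m$), and if the overshoot is $j\ge 1$, delete $j$ consecutive vertices from the last partial cycle. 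This leaves exactly $m$ cycle vertices inducing $m-1$ edges, since the whole cycles contribute one edge per vertex while the partial cycle becomes a path with one fewer edge than vertices. Together with all of $T_{m(r-1),r-1}$, these vertices form the required $T_{mr,r}+E'$, to which the hypothesis applies.

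\textbf{The hard bullet (3).} Edge counting falls short here because $m/(k+1)$ whole trees of order $k+1$ yield only $mk/(k+1)$ edges on $m$ vertices, which can be strictly less than $Qmr$ when $Q$ is close to $1/r$. Instead I would appeal directly to the maximality of $k$. Let $N$ denote the number of isomorphism types of trees of order $k+1$; by pigeonhole at least $\lceil m/N\rceil$ of the $T_i$ are isomorphic to a single tree $T^*$. Restricting to these copies and using $T_{m(r-1),r-1}\supseteq T_{\lceil m/N\rceil(r-1),r-1}$, the full graph contains $U(T^*,\lceil m/N\rceil)+T_{\lceil m/N\rceil(r-1),r-1}$ as a subgraph, and by the very definition of $k$ this structure is not $\mathcal F$-free once $m$ is large enough. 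A brief monotonicity observation, namely that $U(T^*,m'+1)+T_{(m'+1)(r-1),r-1}$ contains $U(T^*,m')+T_{m'(r-1),r-1}$, upgrades the negation-of-maximality statement from ``for arbitrarily large $m'$'' to ``for all sufficiently large $m'$,'' which is what we need.
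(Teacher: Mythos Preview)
Your proposal is correct and follows essentially the same strategy as the paper: edge-counting against the standing hypothesis for bullets 1, 2, and 4, and a pigeonhole appeal to the maximality of $k$ for bullet 3. Your handling of bullet 4 is in fact more careful than the paper's (which asserts the graph equals $T_{m'r,r}$ plus $m'$ edges even though the other $r-1$ parts only have $m<m'$ vertices each), and your explicit monotonicity remark for bullet 3 cleanly justifies the passage from ``arbitrarily large $m'$'' to ``all sufficiently large $m'$'' that the paper leaves implicit.
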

\begin{proof}
    Assume for a contradiction one of the graphs above is $\mathcal F$-free.
    \begin{itemize}
        \item $K_{1,m}+T_{m(r-1),r-1}$ contains a graph obtained by adding $m-1$ edges to $T_{m(r-1),r-1}$, so $Q(m-1)r\ge m$ by assumption and $Q\ge\left(1-\frac{1}{m}\right)/r$, which contradicts $Q<1/r$ for large enough $m$.
        \item $M_m+M_m+T_{m(r-2),(r-2)}$ is obtained by adding $m$ edges to $T_{mr,r}$, so $Qmr\ge m$, hence $Q\ge 1/r$, a contradiction.
        \item By applying Cayley's formula and the pigeonhole principle, $(T_1\cup\cdots\cup T_m)+T_{m(r-1),r-1}$ contains a graph obtained by embedding $U(T,m/(k+1)^k)$ in one part of $T_{mr/(k+1)^{k},r}$ for some tree $T$ of order $k+1$. If $m$ is large enough, this violates the definition of $k(\mathcal F)$.
        \item Let $m'$ be the sum of the orders of the $C_i$, then $m'\ge m$ and $(C_1\cup\cdots\cup C_m)+T_{m'(r-1),r-1}$ is obtained from $T_{m'r,r}$ by adding $m'$ edges to one part, which contradicts $Q<1/r$.
    \end{itemize}
\end{proof}

Assume that $G=G_n$ is any sequence of graphs in $\mathrm{EX}(n,\mathcal F)$. Let $m$ be a large enough even constant satisfying \autoref{bad graphs}. Let $M=\max\{|V(F)|:F\in\mathcal F\}$. Choose constants $\varepsilon,\varepsilon_1,\varepsilon_2,\varepsilon_3$ satisfying the following inequalities (we do not attempt to avoid redundancy or optimize the constants).
\begin{center}
\begin{tabular}{p{0.3\textwidth}p{0.3\textwidth}}
\begin{itemize}
    \item[(i)] $M\sqrt\varepsilon<1/r$
    \item[(ii)] $\sqrt\varepsilon+2\varepsilon_1<1/r$
    \item[(iii)] $\sqrt\varepsilon+\varepsilon_1<\varepsilon_2$
    \item[(iv)] $\sqrt\varepsilon<\varepsilon_1$
    \item[(v)] $(2mr+1)\sqrt\varepsilon<\varepsilon_1$
\end{itemize} &
\begin{itemize}
    \item[(vi)] $\varepsilon_2+2\sqrt\varepsilon<\varepsilon_3$
    \item[(vii)] $\sqrt\varepsilon+m(k+1)\varepsilon_3<1/r$
    \item[(viii)] $\sqrt\varepsilon+m(k+r)\varepsilon_3<1/r$
    \item[(ix)] $\sqrt\varepsilon+M\varepsilon_3<1/r$
    \item[(x)] $\sqrt\varepsilon+2mr\varepsilon_3<1/r$
\end{itemize}
\end{tabular}
\end{center}
This is possible, as seen by choosing them in the order $\varepsilon_3,\varepsilon_2,\varepsilon_1,\varepsilon$.

We will rely on the following stability theorem of Erd\H{o}s and Simonovits.

\begin{lemma}[Erd\H{o}s \cite{erdos1967some,erdos1966some}, Simonovits \cite{simonovits1968method}] \label{strong version stability}
Suppose that $\mathcal F$ is a finite family of graphs with $\chi(\mathcal F)=r+1.$ For all $\varepsilon>0$ there exists $\delta>0$ such that the following holds: if $n$ is large enough, then any $n$-vertex $\mathcal F$-free graph $G$ with more than $e(T_{n,r})-\delta n^2$ edges can be obtained from $T_{n,r}$ by deleting and adding at most $\varepsilon n^2$ edges.
\end{lemma}

With respect to our graph $G\in\mathrm{EX}(n,\mathcal F)$, and the constant $\varepsilon$ chosen above, this implies that $G$ can be obtained from $T_{n,r}$ by adding and deleting at most $\varepsilon n^2$ edges. Let $V(G)=V=V_1\sqcup\cdots\sqcup V_r$ where $|V_i|\in\{\lfloor n/r\rfloor,\lceil n/r\rceil\}$, and $G$ is obtained from $K_{V_1,\ldots,V_r}$ by adding a graph $A$ and removing a graph $B$ with $e(A)+e(B)\le\varepsilon n^2$. Let $X=\{v\in V:d_B(x)\ge 2\sqrt\varepsilon n\}$, and note that we have
$$2|X|\sqrt{\varepsilon}n\le\sum_{v\in X}d_B(v)= 2e(B)\le 2\varepsilon n^2\Longrightarrow |X|\le\sqrt{\varepsilon}n.$$

For $v\in V_i$, we say that $v$ is $V$-\textit{ordinary} if $N(v)=\bigcup_{j\ne i}V_j-X$.

\begin{lemma} \label{ordinary vertex replacement}
    If $v\in X$, then there exists some $i$ such that $d_{V_i}(v)\le\varepsilon_1n$ and $d_{V_j}(v)\ge n/r-\varepsilon_2n$ for every $j\ne i$.
\end{lemma}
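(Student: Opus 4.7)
The plan is to argue by contradiction using the extremality of $G$, combining Lemma \ref{bad graphs} with an ordinary-vertex swap. Fix $v \in X$ and suppose the conclusion fails: for every $i$, either $d_{V_i}(v) > \varepsilon_1 n$, or some $j \ne i$ satisfies $d_{V_j}(v) < n/r - \varepsilon_2 n$. Let $i^*$ minimize $d_{V_i}(v)$. As a preliminary, call $w \in V$ \emph{almost-ordinary} if $w \notin X$ and $d_A(w) < \sqrt\varepsilon n$; the bounds $|X| \le \sqrt\varepsilon n$ and $e(A) \le \varepsilon n^2$ (plus Markov) imply each $V_\ell$ contains at most $3\sqrt\varepsilon n$ non-almost-ordinary vertices. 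The argument splits on whether $d_{V_{i^*}}(v) > \varepsilon_1 n$ (Case A) or $d_{V_{i^*}}(v) \le \varepsilon_1 n$ with some $j^* \ne i^*$ witnessing $d_{V_{j^*}}(v) < n/r - \varepsilon_2 n$ (Case B).

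In Case A, $v$ has at least $\varepsilon_1 n$ neighbors in every part, and my goal is to embed $K_{1,m}+T_{m(r-1),r-1}$ into $G$ with $v$ as the star center, contradicting Lemma \ref{bad graphs}. Fix any part $V_{i_0}$; among the $\ge \varepsilon_1 n$ vertices of $N(v) \cap V_{i_0}$, at most $3\sqrt\varepsilon n$ are non-almost-ordinary, and each almost-ordinary vertex has $d_A < \sqrt\varepsilon n$, so a greedy selection (valid by inequality (v)) picks $m$ pairwise non-adjacent almost-ordinary leaves $\ell_1, \ldots, \ell_m \in N(v) \cap V_{i_0}$. For each remaining part $V_k$, using that each $\ell_i$ satisfies $|V_k| - d_{V_k}(\ell_i) \le d_B(\ell_i) < 2\sqrt\varepsilon n$,
\[
\Bigl| N(v) \cap V_k \cap \bigcap_{i=1}^m N(\ell_i) \Bigr| \ge d_{V_k}(v) - \sum_{i=1}^m \bigl(|V_k| - d_{V_k}(\ell_i)\bigr) \ge \varepsilon_1 n - 2m\sqrt\varepsilon n \ge m,
\]
again by inequality (v). Picking $m$ vertices from each common neighborhood in the $r-1$ other parts produces the desired Turán subgraph.

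In Case B, summing degrees gives $d(v) \le (r-1)n/r - (\varepsilon_2 - \varepsilon_1)n + O(1)$, whereas a fresh ordinary vertex $v'$ inserted into $V_{i^*}$ (with $N(v') = \bigcup_{j \ne i^*} V_j - X$) has $d(v') \ge (r-1)n/r - \sqrt\varepsilon n - O(1)$; inequality (iii) gives $\varepsilon_2 - \varepsilon_1 > \sqrt\varepsilon$, so $d(v') > d(v)$ for large $n$. Thus $G' := G - v + v'$ satisfies $e(G') > e(G)$. If some $F \in \mathcal F$ embedded into $G'$, the embedding would have to use $v'$, mapping $v'$'s $F$-neighbors to a set $S \subseteq \bigcup_{j \ne i^*} V_j - X$ of size $\le M$; each $s \in S$ has $d_B(s) < 2\sqrt\varepsilon n$, so at most $M \cdot 2\sqrt\varepsilon n$ vertices of $V_{i^*}$ fail to be adjacent to all of $S$. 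Choosing $u \in V_{i^*}$ outside this bad set and outside $V(F) - \{v'\}$ (possible since $|V_{i^*}| \ge n/r - 1$) and substituting $v' \mapsto u$ in the embedding yields $F \subseteq G$, contradicting the $\mathcal F$-freeness of $G$. Hence $G'$ is $\mathcal F$-free with more edges than $G$, contradicting $G \in \mathrm{EX}(n,\mathcal F)$.

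The key difficulty is Case A: one must produce $m$ non-adjacent leaves and simultaneously retain common-neighborhood size $\ge m$ across the $r-1$ other parts, despite each leaf potentially losing up to $2\sqrt\varepsilon n$ from its full part-neighborhood. Inequality (v), $(2mr+1)\sqrt\varepsilon < \varepsilon_1$, is tailored precisely to absorb both the greedy-selection losses and the $m$-fold common-neighborhood deficits. Case B is by comparison routine once the almost-ordinary count is established, though one must be careful that the swap-in vertex $u$ may be chosen to avoid the finite set $V(F)$.
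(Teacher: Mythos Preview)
Your overall strategy matches the paper's: split into (A) every part has many neighbours of $v$ and find a forbidden $K_{1,m}+T_{m(r-1),r-1}$, or (B) some part has few neighbours and a second part is deficient, whence an ordinary-vertex swap gains edges. Your Case~B in fact merges the paper's two swap sub-cases into one clean bound $d(v)\le(1-1/r+\varepsilon_1-\varepsilon_2)n+O(1)$, which is fine since inequality (iii) still makes the swap strictly profitable; the $\mathcal F$-freeness of $G'$ is argued exactly as in the paper.

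There is, however, a genuine gap in Case~A. Having chosen the leaves $\ell_1,\dots,\ell_m$ in $V_{i_0}$, you then pick, \emph{independently for each} remaining part $V_k$, a set of $m$ vertices in $N(v)\cap V_k\cap\bigcap_i N(\ell_i)$, and assert that this yields $K_{1,m}+T_{m(r-1),r-1}$. But the Tur\'an piece $T_{m(r-1),r-1}$ requires the chosen vertices in $V_k$ and $V_{k'}$ (for distinct $k,k'\ne i_0$) to be \emph{mutually adjacent}, and nothing in your selection enforces that: you only intersected with the neighbourhoods of $v$ and of the leaves, not with the neighbourhoods of the vertices already picked in other parts. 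For $r=2$ this is vacuous, but for $r\ge 3$ the construction as written can fail.

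The fix is exactly the iterative argument the paper uses: after choosing the vertices $v_{11},\dots,v_{im}$ in parts $V_1,\dots,V_i$, pick the next batch inside $N(\{v,v_{11},\dots,v_{im}\})\cap(V_{i+1}-X)$, which has size at least $(\varepsilon_1-(2im+1)\sqrt\varepsilon)n\ge m$ by inequality (v). This guarantees every newly chosen vertex is adjacent to all previously chosen ones, so the cross-part edges of $T_{m(r-1),r-1}$ are present. (Incidentally, your insistence that the leaves be pairwise non-adjacent is harmless but unnecessary: extra edges among the leaves do not obstruct containing $K_{1,m}$ as a subgraph.)
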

\begin{proof}
    Suppose not. Note than a $V$-ordinary vertex $u$ satisfies $d(u)\ge(1-1/r-\sqrt\varepsilon)n+O(1)$. 
    \begin{claim} \label{ordinary vertex replacement claim 1}
        If $G'$ is obtained from $G$ by deleting some vertex and adding a $V$-ordinary vertex $u\in V_i$, then $G'$ is $\mathcal F$-free.
    \end{claim}
    \begin{proof}
        Suppose $G'$ contains some $F\in\mathcal F$. Then $u\in V(F)$, and $|N_F(u)|\le M$, $N_F(u)\subseteq V-V_i-X$, so 
        $$|N_G(N_F(u))|\ge(1/r-M\sqrt\varepsilon)n+O(1)>0$$
        since $M\sqrt\varepsilon<1/r$, (i). Thus replacing $u$ by a vertex in $N_G(N_F(u))$ gives a copy of $F$ in $G$, a contradiction.
    \end{proof}
    If there are two different indices $i,i'$ such that $d_{V_i}(v),d_{V_{i'}}(v)\le\varepsilon_1n$ then we have
    $$d(v)\le(1-2/r+2\varepsilon_1)n+O(1).$$
    If we delete $v$ and add a $V$-ordinary vertex then we obtain a graph $G'$ such that
    $$e(G')-e(G)=(1/r-\sqrt\varepsilon-2\varepsilon_1)n+O(1)>0$$
    since $\sqrt\varepsilon+2\varepsilon_1<1/r$ (ii), which contradicts $G\in\mathrm{EX}(n,\mathcal F)$ by \autoref{ordinary vertex replacement claim 1}. If there is a unique index $i$ such that $d_{V_i}(v)\le\varepsilon_1n$ then by assumption there exists some $j\ne i$ such that $d_{V_j}(v)\le n/r-\varepsilon_2n$. Thus
    $$d(v)\le(1-1/r+\varepsilon_1-\varepsilon_2)n+O(1)$$
    and if we delete $v$ and add an $V$-ordinary vertex then we obtain a graph $G'$ such that
    $$e(G')-e(G)=(\varepsilon_2-\varepsilon_1-\sqrt\varepsilon)n+O(1)>0$$
    since $\sqrt\varepsilon+\varepsilon_1<\varepsilon_2$ (iii), which contradicts $G\in\mathrm{EX}(n,\mathcal F)$ by \autoref{ordinary vertex replacement claim 1}. Finally, suppose there is no index $i$ for which $d_{V_i}(v)\le\varepsilon_1n$, and assume without loss of generality $v\in V_1$. Now
    $$|N(v)\cap(V_1-X)|\ge(\varepsilon_1-\sqrt\varepsilon)n+O(1)\ge m$$
    since $\sqrt\varepsilon<\varepsilon_1$ (iv), so there are some $m$ vertices $v_{11},\ldots,v_{1m}\in N(v)\cap(V_1-X)$. Having found $v_{11},\ldots,v_{1m},$ $\ldots,v_{i1},\ldots,v_{im}$, we have
    $$|N(\{v,v_{11},\ldots,v_{im}\})\cap V_{i+1}-X|\ge(\varepsilon_1-2im\sqrt\varepsilon-\sqrt\varepsilon)n+O(1)\ge m$$
    since $(2mr+1)\sqrt\varepsilon<\varepsilon_1$ (v), so we can continue this process until we have selected $v_{11},\ldots,v_{rm}$. Then $G[\{v,v_{11},\ldots,v_{rm}\}]\supseteq K_{1,m}+T_{m(r-1),r-1}$, contradicting \autoref{bad graphs}.
\end{proof}

By \autoref{ordinary vertex replacement}, we may define a new partition $V=W_1\sqcup\cdots\sqcup W_r$ as follows. If $v\in V_i-X$ then let $v\in W_i$. If $v\in X$ then let $v\in W_i$ where $i$ is chosen such that $d_{V_i}(v)\le\varepsilon_1 n$ and $d_{V_j}(v)\ge n/r-\varepsilon_2n$ for every $j\ne i$. Let $w_i:=|W_i|$. Note that $w_i\in(n/r-\sqrt\varepsilon n+O(1),n/r+\sqrt\varepsilon n+O(1))$. Thus for each $v\in W_i$, for each $j\ne i$, $d_{W_j}(v)\ge w_j-(\varepsilon_2+2\sqrt\varepsilon)n+O(1)\ge w_j-\varepsilon_3n$, since $\varepsilon_2+2\sqrt\varepsilon<\varepsilon_3$, (vi). Recall that $m$ is a large even integer satisfying \autoref{bad graphs}.

\begin{lemma} \label{automatic joining}
    Let $K$ be a graph of order at most $m(k+1)$. For $n$ large enough, if $G[W_i]\supseteq K$ for some $i$ then $G\supseteq K+T_{m(r-1),r-1}$.
\end{lemma}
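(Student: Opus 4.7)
The plan is a simple greedy embedding leveraging the density information established for the refined partition $V = W_1 \sqcup \cdots \sqcup W_r$. Recall that every vertex $v \in W_j$ satisfies $d_{W_{j'}}(v) \ge w_{j'} - \varepsilon_3 n$ for every $j' \ne j$, and that $w_{j'} \ge n/r - \sqrt\varepsilon n + O(1)$. Applying a union bound over $V(K) \subseteq W_i$ with $|V(K)| \le m(k+1)$ yields, for each $j \ne i$,
\begin{equation*}
|N_G(V(K)) \cap W_j| \;\ge\; w_j - m(k+1)\varepsilon_3 n \;\ge\; \bigl(\tfrac{1}{r} - \sqrt\varepsilon - m(k+1)\varepsilon_3\bigr)n + O(1),
\end{equation*}
which, by condition (vii), is positive and in fact grows linearly with $n$.

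Next I would greedily extend $V(K)$ by choosing $m$ vertices from each $W_j$, $j \ne i$, maintaining the invariant that every chosen vertex is adjacent to all of $V(K)$ and to all previously chosen vertices lying in different parts. When selecting a new vertex in some $W_{j_0}$, the earlier choices in the other $r-2$ parts of $\bigcup_{j \ne i}W_j$ number at most $m(r-2)$, so adjacency to $V(K)$ together with adjacency to those earlier choices imposes at most $m(k+1) + m(r-2) \le m(k+r)$ constraints, each excluding at most $\varepsilon_3 n$ vertices of $W_{j_0}$. By condition (viii), the set of valid candidates in $W_{j_0}$ still has size at least $\bigl(\tfrac{1}{r} - \sqrt\varepsilon - m(k+r)\varepsilon_3\bigr)n + O(1) > 0$ for $n$ large, so a valid choice always exists.

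Iterating this process produces $m$ vertices in each of the $r-1$ parts $W_j$ ($j\neq i$), pairwise adjacent across parts and each adjacent to all of $V(K)$. These $m(r-1)$ vertices induce a complete $(r-1)$-partite graph with parts of size $m$, i.e.\ a copy of $T_{m(r-1),r-1}$, joined to the given copy of $K$; hence $G \supseteq K + T_{m(r-1),r-1}$. There is no real combinatorial obstacle beyond verifying that the budget $\sqrt\varepsilon + m(k+r)\varepsilon_3$ stays below $1/r$, which is precisely the role of condition (viii); conditions (vii) and (viii) are calibrated so as to accommodate exactly the two sizes $m(k+1)$ (the vertex set of $K$ alone) and $m(k+r)$ (after all greedy choices have been made) that arise in this argument.
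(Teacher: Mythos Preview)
Your proposal is correct and follows essentially the same argument as the paper: both use the codegree bound $d_{W_{j'}}(v)\ge w_{j'}-\varepsilon_3 n$ to greedily embed $m$ common neighbours in each part $W_j$ ($j\ne i$), invoking condition (vii) for the initial step and condition (viii) for the inductive step. The only cosmetic difference is that the paper processes the parts $W_2,\ldots,W_r$ in order and picks all $m$ vertices in each part simultaneously, whereas you phrase the selection vertex-by-vertex; the constraint count $m(k+1)+m(r-2)\le m(k+r)$ is the same in both.
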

\begin{proof}
    Assume without loss of generality $i=1$, then
    $$|N(V(K))\cap W_2|\ge w_2-|V(K)|\varepsilon_3n+O(1)\ge(1/r-\sqrt\varepsilon-m(k+1)\varepsilon_3)n+O(1)\ge m$$
    since $\sqrt\varepsilon+m(k+1)\varepsilon_3<1/r$, (vii), so assume $v_{21},\ldots,v_{2m}\in N(V(K))\cap W_2$. Having found $v_{21},\ldots,v_{im}$, we have
    $$\begin{aligned}|N(V(K)\cup\{v_{21},\ldots,v_{im}\})\cap W_{i+1}|&\ge w_{i+1}-(|V(K)|+(i-1)m)\varepsilon_3n+O(1)\\
    &\ge(1/r-\sqrt\varepsilon-m(k+r)\varepsilon_3)n+O(1)\ge m
    \end{aligned}$$
    since $\sqrt\varepsilon+m(k+r)\varepsilon_3<1/r$, (viii). Thus, we can continue the process until we have found $v_{21},\ldots,v_{rm}$, and $G[V(K)\cup\{v_{21},\ldots,v_{rm}\}]\supseteq K+T_{m(r-1),r-1}$.
\end{proof}

Call a vertex $v\in W_i$ $W$-\textit{ordinary} if $N(v)=V-W_i$.

\begin{lemma} \label{ordinary vertex replacement 2}
    Let $G'$ be obtained from $G$ by deleting some vertex and adding a $W$-ordinary vertex $u\in W_i$. Then $G'$ is $\mathcal F$-free.
\end{lemma}
\begin{proof}
    Suppose $G'$ contains some $F\in\mathcal F$. Then $u\in V(F)$, $|N_F(u)|\le M$ and $N_F(u)\subseteq V-W_i$. Hence
    $$|N_G(N_F(u))|\ge w_i-M\varepsilon_3n+O(1)\ge(1/r-\sqrt\varepsilon-M\varepsilon_3)n+O(1)>0$$
    since $\sqrt\varepsilon+M\varepsilon_3<1/r$, (ix), so replacing $u$ by some vertex in $N_G(N_F(u))$ gives a copy of $F$ in $G$, a contradiction.
\end{proof}

\begin{lemma} \label{easy improvement}
    For each $W_i$, for each $v\in W_i$, we have $d_{W_i}(v)\le m$.
\end{lemma}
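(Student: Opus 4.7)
The plan is to argue by contradiction using \autoref{automatic joining} and \autoref{bad graphs}. Suppose some $v\in W_i$ satisfies $d_{W_i}(v)\ge m+1$. Then $v$ together with $m$ of its neighbors in $W_i$ spans a star $K_{1,m}$ inside $G[W_i]$. Since $K_{1,m}$ has order $m+1\le m(k+1)$, the hypothesis of \autoref{automatic joining} is met with $K=K_{1,m}$, and we conclude that
\[
G\supseteq K_{1,m}+T_{m(r-1),r-1}.
\]
But this is precisely one of the forbidden configurations in \autoref{bad graphs}, so we get a contradiction.

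The only thing to be careful about is verifying that the order bound needed to invoke \autoref{automatic joining} (namely $|V(K)|\le m(k+1)$) is satisfied; since $k\ge 1$ we have $m+1\le 2m\le m(k+1)$, so this is trivial. No further work is needed; the lemma follows immediately from the two prior lemmas and requires no appeal to the constants $\varepsilon,\varepsilon_1,\varepsilon_2,\varepsilon_3$ beyond what is already baked into \autoref{automatic joining}. Thus the main obstacle is really just noticing that a large monochromatic-part degree immediately embeds a small star, which together with the robust joining lemma produces the forbidden graph from \autoref{bad graphs}.
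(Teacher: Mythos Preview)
Your proof is correct and is exactly the argument the paper intends: the paper's proof simply states that the lemma ``follows immediately from \autoref{automatic joining} and \autoref{bad graphs},'' and you have spelled out precisely this implication, including the easy check that $|V(K_{1,m})|=m+1\le m(k+1)$.
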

\begin{proof}
    This follows immediately from \autoref{automatic joining} and \autoref{bad graphs}.
\end{proof}

\begin{lemma} \label{easy improvement 2}
    For each $v\in W_i$, for each $j\ne i$, we have $d_{W_j}(v)\ge w_j-m$.
\end{lemma}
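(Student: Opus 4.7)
My plan is to argue by contradiction, combining \autoref{easy improvement} (which bounds same-part degrees) with \autoref{ordinary vertex replacement 2} (which permits swapping in an ordinary vertex while preserving $\mathcal F$-freeness). Suppose for contradiction that some $v\in W_i$ satisfies $d_{W_j}(v)\le w_j-m-1$ for some $j\ne i$, i.e., $v$ has more than $m$ non-neighbors in $W_j$.

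Using \autoref{easy improvement} to bound $d_{W_i}(v)\le m$ and the trivial bound $d_{W_\ell}(v)\le w_\ell$ for the remaining indices $\ell\notin\{i,j\}$, I would sum to obtain
$$d_G(v)\le m+(w_j-m-1)+\sum_{\ell\ne i,j}w_\ell=n-w_i-1.$$
On the other hand, any ordinary vertex $u\in W_i$ has degree exactly $n-w_i$, since by definition its neighborhood is $V\setminus W_i$ and $|W_i|$ is unchanged under a vertex swap within $W_i$. So letting $G'$ be the graph obtained from $G$ by deleting $v$ and inserting a new ordinary vertex $u\in W_i$, we get
$$e(G')-e(G)=d_{G'}(u)-d_G(v)\ge 1.$$
But \autoref{ordinary vertex replacement 2} guarantees that $G'$ is $\mathcal F$-free, contradicting $G\in\mathrm{EX}(n,\mathcal F)$.

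I do not expect any genuine obstacle here: all the heavy lifting (building and refining the partition $W_1,\ldots,W_r$, controlling within-part degrees via \autoref{easy improvement}, and certifying that ordinary-vertex insertions preserve $\mathcal F$-freeness via \autoref{ordinary vertex replacement 2}) has already been carried out in the preceding lemmas. The only point worth verifying carefully is the edge-count identity $e(G')-e(G)=d_{G'}(u)-d_G(v)$, which is exact because the swap removes precisely the $d_G(v)$ edges incident to $v$ and adds precisely the $d_{G'}(u)$ edges incident to the freshly inserted $u$, with no overlap. Conceptually, the lemma is simply the observation that the same-part bound $d_{W_i}(v)\le m$ of \autoref{easy improvement} leaves no slack in the extremality budget, so any $v$ missing even one extra edge across parts can be profitably traded for an ordinary vertex.
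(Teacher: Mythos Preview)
Your proof is correct and follows exactly the same approach as the paper's own proof, which is a terse one-liner invoking \autoref{easy improvement} and \autoref{ordinary vertex replacement 2} to derive a contradiction from $G\in\mathrm{EX}(n,\mathcal F)$. You have simply expanded the edge-count computation in full detail.
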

\begin{proof}
    If $d_{W_j}(v)\le w_j-m-1$ then by \autoref{easy improvement} replacing $v$ with a $W$-ordinary vertex increases the number of edges, and by \autoref{ordinary vertex replacement 2} and $G\in\mathrm{EX}(n,\mathcal F$) this is a contradiction.
\end{proof}

\begin{lemma} \label{strong one part}
    For some constant $C$, there is at most one $i$ such that $e(W_i)\ge C$.
\end{lemma}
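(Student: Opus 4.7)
The plan is to argue by contradiction: suppose there are two distinct indices $i \ne j$, without loss of generality $i=1$ and $j=2$, with $e(W_1), e(W_2) \ge C$ for some constant $C = C(m,r)$ chosen large enough. The goal is to exhibit a copy of $M_m + M_m + T_{m(r-2), r-2}$ in $G$, contradicting \autoref{bad graphs}.

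First I will extract a matching on $m$ vertices inside each of $W_1$ and $W_2$. By \autoref{easy improvement}, both $G[W_1]$ and $G[W_2]$ have maximum degree at most $m$, and a standard greedy deletion argument shows that any graph with $\ell$ edges and maximum degree at most $m$ contains a matching of at least $\ell/(2m-1)$ edges. Thus for $C \ge \tfrac{m(2m-1)}{2}$ we obtain a matching $M^{(1)}$ on $m$ vertices inside $W_1$. To secure a matching $M^{(2)}$ in $W_2$ whose vertex set is completely joined to $V(M^{(1)})$, observe that by \autoref{easy improvement 2} each vertex of $V(M^{(1)})$ has at most $m$ non-neighbors in $W_2$, so the set $B \subseteq W_2$ of vertices failing to be adjacent to some vertex of $V(M^{(1)})$ has $|B| \le m^2$. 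Deleting $B$ from $G[W_2]$ removes at most $m \cdot m^2 = m^3$ edges, so $G[W_2 \setminus B]$ still has at least $C - m^3$ edges. Enlarging $C$ so that $C - m^3 \ge \tfrac{m(2m-1)}{2}$, the same greedy argument yields $M^{(2)}$ on $m$ vertices inside $W_2 \setminus B$. Consequently $G[V(M^{(1)}) \cup V(M^{(2)})] \supseteq M_m + M_m$.

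Finally, I extend across the remaining parts $W_3, \ldots, W_r$ in the same greedy manner as in \autoref{automatic joining}, using the stronger bound from \autoref{easy improvement 2} in place of the weaker $\varepsilon_3 n$ bound. At each stage the set $S$ of already chosen vertices has $|S| \le rm$, and by \autoref{easy improvement 2} each vertex of $S$ has at most $m$ non-neighbors in any $W_l$ with $l \ge 3$, so the common non-neighbors of $S$ in $W_l$ number at most $rm^2$. Since $|W_l| \ge n/r - \sqrt{\varepsilon}\,n + O(1)$, for $n$ large enough we may pick $m$ common neighbors of $S$ in each such $W_l$, completing a copy of $M_m + M_m + T_{m(r-2), r-2}$ in $G$. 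This contradicts \autoref{bad graphs}, and the result follows. The only mild obstacle is the quantitative choice of $C$ needed to make both matching extractions work after removing the set of non-neighbors, which is handled by the explicit constants above.
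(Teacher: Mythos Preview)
Your proof is correct and follows essentially the same approach as the paper: extract matchings in two parts with many edges (using the bounded-degree fact from \autoref{easy improvement}), ensure they are completely joined via \autoref{easy improvement 2}, then greedily extend through the remaining parts to build $M_m+M_m+T_{m(r-2),r-2}$ and contradict \autoref{bad graphs}. The only cosmetic differences are that the paper first takes an oversized matching in $W_2$ and then prunes edges not fully joined to $M_1$, whereas you first delete the small set $B$ of non-neighbors and then extract the matching; and for the extension step the paper appeals to the $\varepsilon$-bounds (inequality (x)) while you use the sharper constant bound from \autoref{easy improvement 2}, which is a mild simplification.
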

\begin{proof}
    Assume for a contradiction that $e(W_1),e(W_2)$ are both arbitrarily large. By \autoref{easy improvement}, $G[W_1],G[W_2]$ have bounded maximum degree, so this implies we can find a matching $M_1$ in $W_1$ of order $m$ and a matching $M_2$ in $W_2$ of order $2(m+1)m$. For each $v\in M_1$, by \autoref{easy improvement 2}, there are at most $m$ edges in $M_2$ either of whose endpoints are not adjacent to $v$, which implies that there are some $(m+1)m-m^2=m$ edges in $M_2$ inducing a matching $M_2'$ such that $G[M_1,M_2']\simeq K_{m,m}$. Similarly to lemmas proved above, having found $v_{31},\ldots,v_{im}$, we have
    $$\begin{aligned}|N(M_1\cup M_2'\cup\{v_{31},\ldots,v_{im}\})\cap W_{i+1}|&\ge w_{i+1}-(2m+(i-2)m)\varepsilon_3n+O(1)\\
    &\ge(1/r-\sqrt\varepsilon-2mr\varepsilon_3)n+O(1)\ge m
    \end{aligned}$$
    since $\sqrt\varepsilon+2mr\varepsilon_3<1/r$, (x). Thus, we find $M_1,M_2',v_{31},\ldots,v_{rm}$ whose union induces an $M_m+M_m+T_{(m-2)r,r}$, which contradicts \autoref{bad graphs}.
\end{proof}

Assume without loss of generality that $W_1$ is the part for which $e(W_1)\to\infty$ is possible, i.e. for all $i\ne 1$, we have $e(W_i)=O(1)$. For each $i\ne 1$, we may write $W_i=W_i^o\sqcup W_i^m$, where $e(W_i^o)=0$ and $|W_i^m|=O(1)$. 

\begin{lemma} \label{ordinary vertices}
    For each $i\ne 1$, all vertices in $W_i^o$ are $W$-ordinary.    
\end{lemma}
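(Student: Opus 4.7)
The plan is to apply the replacement swap of \autoref{ordinary vertex replacement 2} to vertices $v \in W_i^o$ that fail to be ordinary, and derive a contradiction with the extremality of $G$.

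First I would pin down the decomposition $W_i = W_i^o \sqcup W_i^m$ concretely, since the conditions $e(W_i^o) = 0$ and $|W_i^m| = O(1)$ leave freedom in the choice. I would take $W_i^m$ to be exactly the set of vertices of $W_i$ incident to at least one edge of $G[W_i]$, and $W_i^o$ to be the rest. Since $e(W_i) = O(1)$ for every $i \ne 1$ by \autoref{strong one part}, we have $|W_i^m| \le 2 e(W_i) = O(1)$, and by construction every $v \in W_i^o$ satisfies $d_{W_i}(v) = 0$.

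With this choice in hand, I would argue by contradiction. Suppose $v \in W_i^o$ is not ordinary, i.e.\ $N(v) \ne V - W_i$. Since $d_{W_i}(v) = 0$, the only way this can happen is if $v$ is missing at least one neighbor in $V - W_i$, so $d(v) \le n - w_i - 1$. Let $u$ be a fresh ordinary vertex of $W_i$ and form $G'$ from $G$ by deleting $v$ and adding $u$. By \autoref{ordinary vertex replacement 2}, $G'$ is $\mathcal F$-free. Since $d(u) = n - w_i$, we obtain $e(G') - e(G) = d(u) - d(v) \ge 1$, contradicting $G \in \mathrm{EX}(n,\mathcal F)$.

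I do not expect any significant obstacle here: this is essentially the same single-vertex swap used to prove \autoref{easy improvement 2}, and the only genuinely new ingredient is the observation that once $W_i^m$ is chosen maximally, every $v \in W_i^o$ is completely disconnected from $W_i$, which immediately forces any non-ordinary vertex of $W_i^o$ to miss a neighbor outside $W_i$. Had one instead allowed $W_i^o$ to contain vertices with edges into $W_i^m$, one would have to run an iterative cleanup using \autoref{easy improvement}; but the maximal choice of $W_i^m$ sidesteps that complication entirely.
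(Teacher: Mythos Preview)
Your proposal is correct and follows essentially the same route as the paper: assume $v\in W_i^o$ is not ordinary, use $d_{W_i}(v)=0$ to see that $v$ must be missing a neighbour in $V-W_i$, then replace $v$ by an ordinary vertex via \autoref{ordinary vertex replacement 2} to strictly increase the edge count and contradict $G\in\mathrm{EX}(n,\mathcal F)$. Your explicit choice of $W_i^m$ as the support of $E(G[W_i])$ is a useful clarification, since the paper's phrasing ``$e(W_i^o)=0$'' only forbids edges inside $W_i^o$, whereas the proof needs $d_{W_i}(v)=0$; your maximal choice of $W_i^m$ makes that step airtight.
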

\begin{proof}
    Let $v\in W_i^o$ not be $W$-ordinary. Since $d_{W_i}(v)=0$, replacing $v$ with a $W$-ordinary vertex increases the number of edges, which by \autoref{ordinary vertex replacement 2} and $G\in\mathrm{EX}(n,\mathcal F)$ is a contradiction.
\end{proof}

\begin{lemma} \label{structure of w1}
    We can write $W_1=W_1^o\sqcup W_1^m$, where $G[W_1^o]$ is a disjoint union of trees of order $k$, each tree component of $G[W_1^o]$ occurs more than $M(m+1)$ times, and $|W_1^m|=O(1)$.
\end{lemma}
\begin{proof}
    We prove the result in a series of a claims.
    \begin{claim} \label{structure of w1 claim 1}
        The number of components of $G[W_1]$ whose order is at least $k+1$ is at most $m$.
    \end{claim}
    \begin{proof}
        If not, then we can find trees $T_1,\ldots,T_m$ of order $k+1$ such that $T_1\cup\cdots\cup T_m\subseteq G[W_1]$. Then by \autoref{automatic joining}, we have $(T_1\cup\cdots\cup T_m)+T_{m(r-1),r-1}\subseteq G$, contradicting \autoref{bad graphs}.
    \end{proof}
    \begin{claim} \label{structure of w1 claim 2}
        The graph $G[W_1]$ contains at most $m$ disjoint cycles of order at most $k$.
        \begin{proof}
            If not, then let the $m$ cycles be called $C_1,\ldots,C_m$. By \autoref{automatic joining}, we have $(C_1\cup\cdots\cup C_m)+T_{m(r-1),r-1}\subseteq G$, contradicting \autoref{bad graphs}.
        \end{proof}
    \end{claim}
    \begin{claim} \label{structure of w1 claim 3}
        The graph $G[W_1]$ has at least $\sqrt n$ isomorphic tree components of order $k$.
    \end{claim}
    \begin{proof}
        Suppose not. Then by \autoref{structure of w1 claim 1}, \autoref{structure of w1 claim 2}, and Cayley's formula, all but $k^{k-2}\sqrt n+O(1)$ of the components of $G[W_1]$ are smaller trees. By the definition of $k(\mathcal F)$, it is easy to check that the graph $G'$ obtained by deleting $E(W_i)$ for all $i$ and then embedding $U(T_k,w_1)$ in $W_1$ for some $k$-vertex tree $T_k$ is $\mathcal F$-free. Then we have
        $$e(G')-e(G)\ge w_1\frac{k-1}{k}+O(1)-k^{k-2}\sqrt n-w_1\frac{k-2}{k-1}>0$$
        which contradicts $G\in\mathrm{EX}(n,\mathcal F)$.
    \end{proof}
    Let $T'$ be a tree occurring at least $\sqrt n$ times as guaranteed by \autoref{structure of w1 claim 3}.
    \begin{claim} \label{structure of w1 claim 4}
        For any set $S\subseteq W_1$ with $|S|=k$, the graph $G'$ obtained by deleting $E(S,W_1)$ and adding a copy of $T'$ on $S$ is $\mathcal F$-free.
    \end{claim}
    \begin{proof}
        Assume for a contradiction that $G'\supseteq F\in\mathcal F$. Then $S':=V(F)\cap S\ne\emptyset$. Note there are at least $\sqrt n/2$ copies of $T'$ which do not intersect $F$ in any vertex. Now we have
        $$|N(N_F(S'))\cap W_1|\ge w_1-Mm\ge w_1-\sqrt n/4$$
        using \autoref{easy improvement 2}, so there is a copy of $T'$ such that $V(T')\cap V(F)=\emptyset$ and $V(T')\subseteq N(N_F(S'))$. Replacing $S'$ with the corresponding vertices in $T'$ gives a copy of $F$ in $G$, a contradiction.
    \end{proof}
    \begin{claim} \label{structure of w1 claim 5}
        If the tree components of $G[W_1]$ of order at most $k-1$ are $T_1,\ldots,T_j$, then we have
        $$\sum_{i=1}^j|V(T_i)|\le (k-1)^2k.$$
    \end{claim}
    \begin{proof}
        If not, then for some $\ell\le k-1$, there are $k$ tree components $T_1,\ldots,T_k$ of order $\ell$. By \autoref{structure of w1 claim 4}, if $G'$ is obtained from $G$ by replacing $T_1\cup\cdots\cup T_k$ with $\ell\cdot T'$, then $G'$ is $\mathcal F$-free. Also, $e(G')>e(G)$ which contradicts $G\in\mathrm{EX}(n,\mathcal F)$.
    \end{proof}
    Combining \autoref{structure of w1 claim 1}, \autoref{structure of w1 claim 2}, and \autoref{structure of w1 claim 5} gives the result (we put any trees of order $k$ which occur at most $M(m+1)$ times in $W_1^m$).
\end{proof}

\begin{lemma} \label{structure of w1 2}
    Every vertex in $W_1^o$ is adjacent to all vertices in $V-W_1$.
\end{lemma}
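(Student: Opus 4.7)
The plan is to assume for contradiction that some $v \in W_1^o$ has a non-neighbor $u \in V \setminus W_1$, and form $G^* := G + vu$. Since $e(G^*) > e(G)$ and $G \in \mathrm{EX}(n,\mathcal F)$, there must be some $F \in \mathcal F$ with $F \subseteq G^*$; any such copy uses the new edge $vu$, forcing $v \in V(F)$. The goal is to locate $F$ inside $G$ via a replacement argument modeled on \autoref{structure of w1 claim 4}, which will contradict $G \in \mathrm{EX}(n,\mathcal F)$.

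Let $T$ be the tree component of $G[W_1^o]$ containing $v$ and set $S := V(F) \cap V(T)$, so $v \in S$. By \autoref{structure of w1} the isomorphism type of $T$ occurs $\omega(1)$ times in $W_1^o$, so I can pick a copy $T''$ of $T$ disjoint from $V(F)$ together with an isomorphism $\phi: T \to T''$. Substituting $s \mapsto \phi(s)$ in the embedding of $F$ gives the desired copy in $G$ provided that for each $s \in S$ and each $w \in N_F(s) \setminus S$, we have $\phi(s) w \in E(G)$. Since $s \in W_1^o$ has its $W_1^o$-neighborhood confined to $V(T)$, any such $w$ must lie in $V \setminus W_1$ or in $W_1^m$. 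For $w \in V \setminus W_1$, \autoref{easy improvement 2} bounds the non-neighbors of $w$ in $W_1$ by $m$, excluding only $O(1)$ candidate tree components. For $w \in W_1^m$, one uses $|W_1^m| = O(1)$ together with \autoref{easy improvement}: at most $O(1)$ tree components of $G[W_1^o]$ contain a vertex adjacent to $W_1^m$, and a preliminary step absorbing these $O(1)$ trees into $W_1^m$ (preserving both $|W_1^m| = O(1)$ and the conclusions of \autoref{structure of w1}) renders this case vacuous.

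Edges of $F$ disjoint from $S$ avoid both $v$ and $u$, hence already lie in $E(G)$; edges inside $S$ are tree edges of $T$ that $\phi$ carries to tree edges of $T'' \subseteq G$. With $\omega(1)$ candidate copies of $T$ and only $O(1)$ forbidden, a valid $(T'', \phi)$ exists, producing $F \subseteq G$, the desired contradiction. I expect the main obstacle to be the treatment of the $W_1^m$ case: a naive substitution requires $\phi(s)$ to hit a prescribed small neighborhood for each $W_1^m$-target rather than simply avoiding a small forbidden set, which is more delicate. The cleanup step above side-steps this by eliminating all $W_1^o$-to-$W_1^m$ adjacencies at the cost of a bounded enlargement of $W_1^m$, after which the argument parallels \autoref{structure of w1 claim 4} directly.
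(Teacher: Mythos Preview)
Your proof is correct and follows the same replacement strategy as the paper's. Note, however, that your $W_1^m$ case is vacuous and the cleanup step unnecessary: by the construction in the proof of \autoref{structure of w1}, $W_1^o$ is a union of \emph{full connected components} of $G[W_1]$ (namely the tree components of order exactly $k$ that occur $\omega(1)$ times), so there are no edges of $G$ between $W_1^o$ and $W_1^m$ at all, and every $w\in N_F(s)\setminus S$ automatically lies in $V\setminus W_1$.
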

\begin{proof}
    Assume for a contradiction that $v\in W_1^o$ and  $N(v)\not\supseteq V-W_1$, so there is some $u\not\in W_1$ with $v\not\sim u$. Then the graph $G'$ obtained by adding the edge $uv$ contains some $F\in\mathcal F$. Let the component of $G[W_1]$ containing $v$ be $T$. By \autoref{structure of w1}, there are more than $M(m+1)$ disjoint copies of $T$ in $G[W_1]$, and more than $Mm$ of these do not intersect $F$ in any vertex. Then from \autoref{easy improvement 2} we have
    $$|N(N_F(v))\cap W_1|\ge w_1-Mm$$
    so some copy of $T$ is contained in $N(N_F(v))$. This implies $F\subseteq G$, a contradiction.
\end{proof}

At this point we know that $G$ can be obtained from $K_{W_1,\ldots,W_r}$ by embedding disjoint trees of order $k$ in $W_1$, then deleting and adding $O(1)$ edges. 

\begin{lemma} \label{OVXT edge}
    Suppose $H$ is any graph obtained from $K_{W_1,\ldots,W_r}$ by embedding disjoint trees in $W_1$, then deleting and adding $O(1)$ edges. Let $W_i^o,W_i^m$ be defined as above, and assume that $|W_i^o|\to\infty$, $|W_i^m|=O(1)$ for each $i$. For any $i,j$, let $H_{ij}$ be defined by deleting a vertex $u$ in $W_i^o$ such that $d_{W_i}(u)\le 1$ and adding a vertex $v$ whose neighborhood is $V-W_j-u$. If $H$ is $\mathcal F$-free then so is $H_{ij}$. Moreover, $e(H_{ij})-e(G)=|W_i|-|W_j|-1$ if $i\ne 1$ and $e(H_{ij})-e(G)\ge |W_i|-|W_j|-2$ if $i=1$.
\end{lemma}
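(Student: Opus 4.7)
The plan is a direct vertex-substitution argument in the spirit of the earlier ordinary-vertex replacement lemmas such as \autoref{ordinary vertex replacement 2} and \autoref{structure of w1 2}, followed by a routine edge count.

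To see that $H_{ij}$ is $\mathcal F$-free, I would argue by contradiction: suppose some $F\in\mathcal F$ embeds into $H_{ij}$. Since $H_{ij}-v$ equals $H-u$, which is a subgraph of $H$, the new vertex $v$ must appear in the copy of $F$. Then $N_F(v)\subseteq N_{H_{ij}}(v)=V-W_j-\{u\}$, so in particular $N_F(v)$ is disjoint from $W_j$ and avoids $u$. Because $|W_j^o|\to\infty$ while $|V(F)|\le M$ is bounded, I can pick an ordinary vertex $w\in W_j^o\setminus(V(F)\cup\{u\})$. By the definition of ``ordinary'' when $j\ne 1$, or by \autoref{structure of w1 2} when $j=1$, we have $V-W_j\subseteq N_H(w)$, and hence $N_F(v)\subseteq V-W_j-\{u\}\subseteq N_H(w)$. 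Substituting $w$ for $v$ inside the copy of $F$ produces a copy of $F$ in $H$, contradicting the $\mathcal F$-freeness of $H$.

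For the edge count I would just compute $e(H_{ij})-e(H)=-d_H(u)+d_{H_{ij}}(v)$. Under the natural reading where $i\ne j$ (so $u\notin W_j$), the new vertex has $d_{H_{ij}}(v)=|V-W_j-\{u\}|=n-|W_j|-1$. When $i\ne 1$, the vertex $u\in W_i^o$ is ordinary so $d_H(u)=n-|W_i|$, giving exactly $|W_i|-|W_j|-1$. When $i=1$, we have $N_H(u)\supseteq V-W_1$ with at most one additional neighbor in $W_1$ by the hypothesis $d_{W_1}(u)\le 1$, so $d_H(u)\le n-|W_1|+1$, yielding the lower bound $|W_1|-|W_j|-2$.

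The only real subtlety is verifying that the substitute vertex $w$ can indeed be chosen avoiding $V(F)\cup\{u\}$, which is immediate from the asymptotic assumption $|W_j^o|\to\infty$ and the finite bound $M$ on the order of any forbidden graph; after that, the whole argument reduces to checking a single inclusion of neighborhoods and computing two degrees.
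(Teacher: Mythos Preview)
Your proposal is correct and follows essentially the same approach as the paper: the paper's proof also assumes a copy of some $F\in\mathcal F$ in $H_{ij}$, notes that $v\in V(F)$, picks $v'\in W_j^o\setminus V(F)$ using $|W_j^o|\to\infty$, observes $N_H(v')\supseteq N_{H_{ij}}(v)$, and substitutes; the paper then dismisses the edge count as ``obvious,'' whereas you spell it out explicitly (and correctly, under the natural reading $i\ne j$, which is the only case ever used).
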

\begin{proof}
    Suppose $H_{ij}$ has a copy of some $F\in\mathcal F$. If $H$ is $\mathcal F$-free, we must have that $v\in V(F)$. Since $|W_j|\to\infty$ and $|V(F)|\le M$, there is some $v'\in W_j^o-V(F)$, and $N_H(v')\supseteq N_{H_{ij}}(v)$. Thus, replacing $v$ by $v'$ gives a copy of $F$ in $H$, a contradiction. The second claim is obvious.
\end{proof}

Below we refer to $H_{ij}$ for various graphs $H$ without restating the full setup when it is clear from context. We will make statements such as ``$H$ satisfies $|w_i-w_j|\le a$ for all $i,j$," where the $w_i$ are assumed to be the values of $|W_i|$ which satisfy the assumptions of \autoref{OVXT edge}.

\begin{lemma} \label{balance 1}
    For any $i,j\in\{2,\ldots,r\}$, we have $|w_i-w_j|\le 1$.
\end{lemma}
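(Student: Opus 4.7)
The plan is to derive Lemma \ref{balance 1} as an immediate consequence of Lemma \ref{OVXT edge} via an extremality argument. Suppose for contradiction that $w_i \ge w_j + 2$ for some $i, j \in \{2, \ldots, r\}$. By Lemma \ref{ordinary vertices}, every vertex of $W_i^o$ is ordinary, so any $u \in W_i^o$ satisfies $d_{W_i}(u) = 0 \le 1$. Such a $u$ exists because $|W_i^o| = |W_i| - |W_i^m| = n/r + o(n)$ tends to infinity.

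Next I would apply Lemma \ref{OVXT edge} with $H = G$ and the chosen vertex $u$. Before doing so, one should note that $G$ meets the hypotheses of that lemma: the paragraph preceding Lemma \ref{OVXT edge} states that $G$ arises from $K_{W_1, \ldots, W_r}$ by embedding trees in $W_1$ followed by $O(1)$ edge modifications, while Lemma \ref{structure of w1} (for $i = 1$) and the paragraph preceding Lemma \ref{ordinary vertices} (for $i \ne 1$) ensure that $|W_i^o| \to \infty$ and $|W_i^m| = O(1)$ for all $i$. Since $G \in \mathrm{EX}(n, \mathcal F)$ is $\mathcal F$-free, Lemma \ref{OVXT edge} yields a graph $G_{ij}$, obtained by deleting $u$ and adding a vertex $v$ with $N(v) = V - W_j - \{u\}$, which is also $\mathcal F$-free. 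Applying the edge-count identity from Lemma \ref{OVXT edge} in the case $i \ne 1$ gives
$$e(G_{ij}) - e(G) = w_i - w_j - 1 \ge 1,$$
contradicting $G \in \mathrm{EX}(n, \mathcal F)$. Swapping the roles of $i$ and $j$ yields $w_j - w_i \le 1$ as well, so $|w_i - w_j| \le 1$.

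I do not expect any real obstacle here: the entire content of the lemma is packaged in the swap operation and edge-count formula of Lemma \ref{OVXT edge}, and the only point worth checking is the availability of a suitable vertex $u$, which is immediate from Lemma \ref{ordinary vertices} when $i \in \{2, \ldots, r\}$. The reason the statement restricts to $i, j \ge 2$ is precisely that the analogous identity in Lemma \ref{OVXT edge} loses a unit when $i = 1$, so the same argument would only yield $w_1 - w_j \le 2$ rather than $\le 1$; this is why the part $W_1$ (which may carry the embedded trees) is treated separately in subsequent lemmas.
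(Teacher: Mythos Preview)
Your proposal is correct and follows the same route as the paper: assume $w_i\ge w_j+2$, invoke Lemma \ref{OVXT edge} to see that $G_{ij}$ is $\mathcal F$-free with $e(G_{ij})-e(G)=w_i-w_j-1\ge 1$, contradicting extremality. Your extra care in verifying the hypotheses of Lemma \ref{OVXT edge} (existence of a suitable $u\in W_i^o$ via Lemma \ref{ordinary vertices}) and your closing remark on why $i,j\ge 2$ is needed are both accurate elaborations on what the paper leaves implicit.
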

\begin{proof}
    If not, then there is some $i,j\ge 2$ such that $w_i\ge w_j+2$. By \autoref{OVXT edge}, $G_{ij}$ is $F$-free and $e(G_{ij})-e(G)=w_i-w_j-1\ge 1$, which is a contradiction.
\end{proof}

\begin{lemma} \label{balance 2}
    For any $i\in\{2,\ldots,r\}$, we have $w_1-2\le w_i\le w_1+1$.
\end{lemma}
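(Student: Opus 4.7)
The plan is to apply \autoref{OVXT edge} in both directions, comparing $W_1$ against each $W_i$ ($i\ge 2$), and to use the extremality of $G$ to force the part sizes to be nearly balanced. The structure of the argument is essentially a mirror of the proof of \autoref{balance 1}, but now one of the two indices is fixed to $1$, which is why the bounds become asymmetric.

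For the upper bound $w_i\le w_1+1$: I would argue by contradiction. Supposing $w_i\ge w_1+2$ for some $i\ge 2$, \autoref{OVXT edge} applied with these $i$ and $j=1$ produces an $\mathcal F$-free graph $G_{i1}$ satisfying $e(G_{i1})-e(G)=w_i-w_1-1\ge 1$, contradicting $G\in\mathrm{EX}(n,\mathcal F)$. Here the vertex $u$ deleted from $W_i^o$ is simply any ordinary vertex (it has $d_{W_i}(u)=0$), so the hypothesis of \autoref{OVXT edge} is trivially satisfied.

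For the lower bound $w_i\ge w_1-2$: the argument is parallel but uses the weaker inequality $e(H_{1j})-e(H)\ge w_1-w_j-2$ provided by \autoref{OVXT edge} when $i=1$. Supposing $w_1\ge w_i+3$ for some $i\ge 2$, I would apply the transformation with $i=1$ and $j=i$, selecting $u$ to be a leaf of some $k$-vertex tree component of $G[W_1^o]$ (such a leaf exists by \autoref{structure of w1} and satisfies $d_{W_1}(u)\le 1$ as required). This yields an $\mathcal F$-free graph $G_{1i}$ with $e(G_{1i})-e(G)\ge w_1-w_i-2\ge 1$, again contradicting extremality.

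The asymmetry in the statement (namely $-2$ against $+1$) is forced precisely by the asymmetry in \autoref{OVXT edge}: deleting a leaf of a tree component in $W_1^o$ loses an extra intra-part edge compared to deleting a truly isolated ordinary vertex from $W_i^o$ for $i\ge 2$. There is no real obstacle here; the entire lemma reduces to two one-line applications of \autoref{OVXT edge}, and combining the two bounds gives the claim.
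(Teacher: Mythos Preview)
Your proposal is correct and follows essentially the same two-line argument as the paper: contradict extremality via \autoref{OVXT edge} applied as $G_{i1}$ when $w_i\ge w_1+2$ and as $G_{1i}$ when $w_1\ge w_i+3$. Your added remarks about which vertex $u$ to delete and the source of the asymmetry are accurate elaborations of points the paper leaves implicit.
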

\begin{proof}
    If $w_1-3\ge w_i$ then by \autoref{OVXT edge} $G_{1i}$ is $\mathcal F$-free and $e(G_{1i})-e(G)\ge w_1-w_i-2\ge 1$, a contradiction. If $w_i\ge w_1+2$ then we have that $G_{i1}$ is $\mathcal F$-free and $e(G_{i1})-e(G)= w_i-w_1-1\ge 1$, a contradiction.
\end{proof}

\begin{proof}[Proof of \autoref{structure lemma 1}] By the above, we are done unless $w_1=w_i+2$ for some $i$. Then by \autoref{OVXT edge}, $G_{1i}$ is $\mathcal F$-free and $e(G_{1i})-e(G)\ge 0$, i.e. $G_{1i}\in\mathrm{EX}(n,\mathcal F)$. One can check that $G_{1i}$ satisfies $|w_i-w_j|\le 1$ for all $i,j$.
\end{proof}

\subsection{Structure of spectral extremal graphs} \label{section strucutre of spectral extremal}

We now prove \autoref{main theorem}. Let $\mathcal F$ be as in \autoref{main theorem} and let $k=k(\mathcal F)$ as in \autoref{k value}. If $k=1$, then the conclusion of \autoref{main theorem} follows from \autoref{non bipartite paper result}, so we will assume that $k\ge 2$. Similarly to the previous subsection, we have the following lemma.

\begin{lemma} \label{bad graphs spectral}
    For some large enough even constant $m$, the following graphs are not $\mathcal F$-free.
    \begin{itemize}
        \item $K_{1,m}+T_{m(r-1),r-1}$
        \item $M_m+M_m+T_{m(r-2),r-2}$
        \item $(T_1\cup\cdots\cup T_m)+T_{m(r-1),r-1}$, for any trees $T_i$ of order $k+1$
        \item $(C_1\cup\cdots\cup C_m)+T_{m(r-1),r-1}$, for any cycles $C_i$ of order at most $k$
    \end{itemize}
\end{lemma}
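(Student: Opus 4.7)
The statement of \autoref{bad graphs spectral} is identical to that of \autoref{bad graphs} proved earlier; the only reason it is being restated is to make the spectral subsection self-contained. The plan is therefore to repeat the same argument, since the hypotheses we need — namely $Q<1/r$ and the definition of $k=k(\mathcal F)$ — are still in force (indeed $Q<c_1(r)<1/r$ here). I would handle each of the four items by producing, inside the alleged $\mathcal F$-free graph, a structure that violates either the Turán bound $\mathrm{ex}(n,\mathcal F)\le e(T_{n,r})+Qn$ or the maximality of $k$.

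For $K_{1,m}+T_{m(r-1),r-1}$: this graph, viewed as a graph on $m(r-1)+1$ vertices, is obtainable from a Turán graph on $m(r-1)+1$ vertices by adding roughly $m-1$ edges inside one part, so $\mathcal F$-freeness would force $Q(m-1)r\ge m-O(1)$, contradicting $Q<1/r$ for $m$ large. For $M_m+M_m+T_{m(r-2),r-2}$: this is $T_{mr,r}$ plus $m$ extra edges across two parts, again contradicting $Q<1/r$. For the disjoint trees case, use Cayley's formula (there are only $k^{k-1}$ labeled trees of order $k+1$) plus pigeonhole to find a single tree $T$ of order $k+1$ repeated $\Theta(m)$ times among the $T_i$; after restricting to these copies and to the corresponding portion of the Turán graph we obtain $U(T,m')+T_{m'(r-1),r-1}$ inside an $\mathcal F$-free graph for arbitrarily large $m'$, directly contradicting the maximality in \autoref{k value}. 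For the disjoint cycles case, setting $m'=\sum|V(C_i)|\ge m$, the graph is $T_{m'r,r}$ plus $m'$ edges inside one part, again violating $Q<1/r$.

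There is essentially no new obstacle: the only point that needs any care is checking that the hypothesis $Q<1/r$, used implicitly throughout the edge-extremal proof, still holds in the spectral setting; this is immediate from $Q<c_1(r)$ and the fact that $c_1(r)<1/r$ (which follows from the definition of $c_1(r)$ for all $r\ge 3$). The even-ness of $m$ (needed, e.g., for $M_m$ to be a genuine perfect matching on $m$ vertices) is handled exactly as in \autoref{bad graphs}. Since the argument is literally the one given earlier, the proof can also be discharged by writing ``the proof is identical to that of \autoref{bad graphs}.''
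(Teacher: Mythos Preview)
Your proposal is correct and follows exactly the paper's approach: the paper does not give a separate proof of \autoref{bad graphs spectral} but simply notes it is the same as \autoref{bad graphs}, and your sketch reproduces that argument item by item. A couple of inconsequential slips (the vertex count for $K_{1,m}+T_{m(r-1),r-1}$ is $mr+1$, not $m(r-1)+1$, and Cayley's formula gives $(k+1)^{k-1}$ labeled trees on $k+1$ vertices) do not affect the reasoning.
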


Let $m$ be a large enough even constant satisfying \autoref{bad graphs spectral}. Let $M=\max\{|V(F)|: F\in\mathcal F\}$. Suppose $G\in\mathrm{SPEX}(n,\mathcal F)$, where $n$ is large enough. Choose positive constants $\varepsilon,\varepsilon_1,\varepsilon_2,\varepsilon_3$,$\varepsilon_4,\varepsilon_5,\varepsilon_6$
satisfying the following inequalities (we do not attempt to avoid redundancy or optimize the constants).
\begin{center}
\begin{tabular}{p{0.4\textwidth}p{0.5\textwidth}}
\begin{itemize}
    \item[(i)] $2(r+1)\sqrt\varepsilon<\min\{\varepsilon_1,\varepsilon_3\}$
    \item[(ii)] $2\varepsilon_1+\sqrt\varepsilon<1/r$
    \item[(iii)] $2(r+1)^2(\sqrt\varepsilon+\varepsilon_1)<\varepsilon_2$
    \item[(iv)] $M\sqrt\varepsilon<1/r$
    \item[(v)] $\sqrt\varepsilon+\varepsilon_2+2\varepsilon_3<1/r$
    \item[(vi)] $\sqrt\varepsilon+\varepsilon_2+\varepsilon_3<\varepsilon_4$
    \item[(vii)] $(2mr+1)\sqrt\varepsilon<\varepsilon_3$
    \item[(viii)] $\max\{\varepsilon_4+\sqrt\varepsilon,3\sqrt\varepsilon\}<\varepsilon_5$
    \item[(ix)] $\sqrt\varepsilon+m(k+1)\varepsilon_5<1/r$
\end{itemize}&
\begin{itemize}
    \item[(x)] $\sqrt\varepsilon+m(k+r)\varepsilon_5<1/r$
    \item[(xi)] $\sqrt\varepsilon+M\varepsilon_5<1/r$
    \item[(xii)] $\frac{r(\varepsilon_1+(r-1)\varepsilon_5)}{r-1}<\varepsilon_6$
    \item[(xiii)] $\frac{r}{r-1}(\varepsilon_5+\varepsilon_1+3\sqrt\varepsilon+\varepsilon_2/r+(r-1)\varepsilon_5)<\varepsilon_6$
    \item[(xiv)] $(m+1)\varepsilon_6<1$
    \item[(xv)] $\sqrt\varepsilon+2mr\varepsilon_5<1/r$
    \item[(xvi)] $2\varepsilon_6<1-\frac{k(k-2)}{(k-1)^2}$
    \item[(xvii)] $2\varepsilon_6<1-\frac{k(k-2)}{(k-1)^2}$.
\end{itemize}
\end{tabular}
\end{center}
This is possible, as seen by choosing them in the order $\varepsilon_6,\varepsilon_5,\varepsilon_4,\varepsilon_3,\varepsilon_2,\varepsilon_1,\varepsilon$.

We will use the spectral version of the stability theorem.

\begin{lemma}[Desai-Kang-Li-Ni-Tait-Wang \cite{desai2022spectral}] \label{spectral stability}
    If $n$ is large enough then $G$ can be obtained from $T_{n,r}$ by adding and deleting at most $\varepsilon n^2$ edges.
\end{lemma}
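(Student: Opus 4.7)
The plan is to reduce this spectral stability statement to the edge version of Erdős-Simonovits stability (the assertion underlying \autoref{strong version stability}) by transferring the spectral information into an edge lower bound. The matching upper bound $e(G)\le\mathrm{ex}(n,\mathcal F)\le e(T_{n,r})+Qn$ comes for free from the hypothesis, so the entire task is to produce a lower bound of the form $e(G)\ge e(T_{n,r})-o(n^2)$.

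I would first establish $\lambda(G)\ge\lambda(T_{n,r})\ge(1-1/r)n-1$, which follows from the spectral extremality of $G$ together with the fact that $\chi(\mathcal F)=r+1$ forces $T_{n,r}$ itself to be $\mathcal F$-free. Next I would pass to a $K_{r+1}$-free spanning subgraph of $G$ via supersaturation: since every graph in $\mathcal F$ has chromatic number $r+1$, the Erdős-Simonovits supersaturation theorem implies that an $n$-vertex $\mathcal F$-free graph contains only $o(n^{r+1})$ copies of $K_{r+1}$; combined with the fact that each edge lies in at most $O(n^{r-1})$ such copies, a maximum-degree deletion argument destroys all copies while removing at most $o(n^2)$ edges, producing a $K_{r+1}$-free spanning subgraph $G^\ast\subseteq G$. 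Writing $A(G)=A(G^\ast)+B$, where $B$ is the adjacency matrix of the deleted-edge subgraph, Weyl's inequality and the crude bound $\lambda(B)\le\sqrt{2e(B)}=o(n)$ give $\lambda(G^\ast)\ge(1-1/r)n-o(n)$. Applying Nikiforov's inequality $\lambda(H)^2\le 2e(H)(1-1/\omega(H))$ to the $K_{r+1}$-free graph $G^\ast$ then yields
$$e(G)\ \ge\ e(G^\ast)\ \ge\ \frac{\lambda(G^\ast)^2}{2(1-1/r)}\ \ge\ \frac{1}{2}\left(1-\frac{1}{r}\right)n^2-o(n^2)\ =\ e(T_{n,r})-o(n^2).$$

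With $G$ now known to be an $\mathcal F$-free graph whose edge count matches the Turán bound up to $o(n^2)$, the edge-stability theorem of Erdős and Simonovits (essentially the statement of \autoref{strong version stability}, which needs only $\chi(\mathcal F)=r+1$) applies verbatim and gives the conclusion: for any prescribed $\varepsilon>0$ and $n$ sufficiently large, $G$ can be obtained from $T_{n,r}$ by adding and deleting at most $\varepsilon n^2$ edges. The main obstacle is the spectral-to-edge transfer: one has to coordinate the supersaturation parameter, the greedy deletion bound, the resulting Weyl loss in $\lambda$, and the associated $o(n^2)$ defect in $e(G)$ so that the combined error ultimately fits inside the prescribed $\varepsilon n^2$ budget. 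This is essentially the line of argument carried out by Desai, Kang, Li, Ni, Tait, and Wang in \cite{desai2022spectral}.
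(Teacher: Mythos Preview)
The paper does not prove this lemma at all: it is quoted directly from \cite{desai2022spectral} and used as a black box. So there is no in-paper argument to compare your sketch against; you have effectively written out a proof where the paper has none.

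Your overall strategy is the right one and matches what is done in the cited source: lower-bound $\lambda(G)$ using the $\mathcal F$-freeness of $T_{n,r}$, pass to a $K_{r+1}$-free subgraph $G^\ast$ with $e(G)-e(G^\ast)=o(n^2)$, push the spectral bound through Weyl's inequality to $G^\ast$, invoke Nikiforov's inequality $\lambda^2\le 2e\,(1-1/\omega)$ to get $e(G^\ast)\ge e(T_{n,r})-o(n^2)$, and finish with classical Erd\H{o}s--Simonovits stability. All of these steps are sound.

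There is, however, one genuine soft spot. Your ``maximum-degree deletion argument'' for producing $G^\ast$ does not work as stated. Knowing that there are $o(n^{r+1})$ copies of $K_{r+1}$ and that every edge lies in $O(n^{r-1})$ of them only lets a greedy deletion shrink the copy count geometrically by a factor $1-c/n^2$ per step, which yields an $O(n^2\log n)$ bound on the number of deletions, not $o(n^2)$. What you actually need here is the $K_{r+1}$ \emph{removal lemma} (a consequence of Szemer\'edi regularity): a graph with $o(n^{r+1})$ copies of $K_{r+1}$ can be made $K_{r+1}$-free by deleting $o(n^2)$ edges. Once you invoke that in place of the greedy step, the rest of your argument goes through cleanly, including the coordination of all the $o(\cdot)$ losses inside the target $\varepsilon n^2$.
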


Therefore, let $V(G)=V_1\sqcup\cdots\sqcup V_r$, where $|V_i|\in\{\lfloor n/r\rfloor,\lceil n/r\rceil\}$ and $G$ is obtained from $K_{V_1,\ldots,V_r}$ by adding a graph $A$ and removing a graph $B$ with $e(A)+e(B)\le\varepsilon n^2$. Let $X=\{v\in V:d_B(x)\ge 2\sqrt\varepsilon n\}$ and note that we immediately have $|X|\le\sqrt{\varepsilon}n$.

The proof of the following lemma follows the lines of Lemma 3.1 of \cite{wang2023conjecture}.

\begin{lemma} \label{connected}
    The graph $G$ is connected.
\end{lemma}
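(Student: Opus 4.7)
The plan is to argue by contradiction, producing an $n$-vertex $\mathcal F$-free graph $G^*$ with $\lambda(G^*)>\lambda(G)$, following the lines of Lemma~3.1 of \cite{wang2023conjecture}. Suppose $G$ is disconnected. Since $\chi(\mathcal F)=r+1$, the Tur\'an graph $T_{n,r}$ is $\mathcal F$-free, so
$$\lambda(G)\ge\lambda(T_{n,r})\ge(1-1/r)n-1.$$
Let $G_1$ be a connected component with $\lambda(G_1)=\lambda(G)$; then $|V(G_1)|\ge\lambda(G_1)+1\ge(1-1/r)n$, so fewer than $n/r$ vertices lie outside $V(G_1)$. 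Pick any $u\in V(G)\setminus V(G_1)$ and any $i_0\in[r]$, and form $G^*$ from $G$ by deleting $u$ and adding a new vertex $v$ with neighborhood $N_{G^*}(v):=V(G_1)\setminus(V_{i_0}\cup X)$; note $|V(G^*)|=n$.

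First I would verify that $G^*$ is $\mathcal F$-free via an ordinary-vertex-replacement argument, as in the proof of \autoref{structure lemma 1}. If some $F\in\mathcal F$ embeds in $G^*$ then $v\in V(F)$, since $G^*-v\subseteq G$. Each $w\in N_F(v)$ lies outside $V_{i_0}\cup X$, so $d_B(w)\le 2\sqrt\varepsilon n$; consequently at most $M\cdot 2\sqrt\varepsilon n$ vertices $v'\in V_{i_0}\setminus X$ fail to satisfy $N_G(v')\supseteq N_F(v)$. Since $|V_{i_0}\setminus X|\ge n/r-\sqrt\varepsilon n-1$, the constant choices (in particular condition (iv)) give some $v'\in(V_{i_0}\setminus X)\setminus V(F)$ with $N_G(v')\supseteq N_F(v)$; replacing $v$ by $v'$ in the copy of $F$ produces $F\subseteq G$, contradicting $\mathcal F$-freeness of $G$.

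For the spectral strict increase, observe
$$|N_{G^*}(v)|\ge|V(G_1)|-|V_{i_0}|-|X|\ge(1-2/r-\sqrt\varepsilon)n-1,$$
which is positive for $r\ge 3$ and large $n$, so $v$ has at least one neighbor in $V(G_1)$. Thus $H:=G^*[V(G_1)\cup\{v\}]$ is connected and strictly contains $G_1$. Extending the unit-norm Perron eigenvector $y$ of $G_1$ by a value $t>0$ at $v$ gives the Rayleigh quotient
$$\frac{\lambda(G_1)+2t\sum_{w\in N_{G^*}(v)}y_w}{1+t^2},$$
which exceeds $\lambda(G_1)$ for sufficiently small $t>0$ because $y>0$ on $V(G_1)$. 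Hence $\lambda(G^*)\ge\lambda(H)>\lambda(G_1)=\lambda(G)$, contradicting $G\in\mathrm{SPEX}(n,\mathcal F)$. The main obstacle is balancing the definition of $N_{G^*}(v)$: it must be large enough to force a spectral gain yet Tur\'an-structured enough for $v$ to be replaceable by an existing vertex of $G$ in any putative copy of $F$; excluding $V_{i_0}\cup X$ achieves both requirements using only the stability partition available at this stage of the argument.
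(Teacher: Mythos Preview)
Your argument is correct, but it differs from the paper's proof in a meaningful way. The paper does not use the stability partition $V_1,\ldots,V_r$ or the set $X$ at all: instead it attaches a single \emph{pendant} vertex $u$ to an arbitrary $v\in V(G_1)$ (together with isolated vertices to keep $n$ fixed). Since this strictly raises the spectral radius, the new graph must contain some $F\in\mathcal F$; but $u$ has only the one neighbor $v$, so if $d_{G_1}(v)>M$ one can swap $u$ for an unused $G_1$-neighbor of $v$ and find $F\subseteq G$. This forces $\Delta(G_1)\le M$, hence $\lambda(G_1)\le M$, contradicting $\lambda(G_1)\ge\lambda(T_{n,r})\to\infty$. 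That argument is shorter, needs no quantitative stability input, and works as soon as $\lambda(G)>M$. Your approach instead builds a high-degree ``ordinary'' vertex using the partition already in place, which dovetails nicely with the replacement lemmas used later in the section; the price is dependence on the stability lemma and on the specific smallness of $\varepsilon$.

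One small quibble: your appeal to condition (iv) is not quite enough as stated. You need roughly $(2M+1)\sqrt\varepsilon<1/r$ to guarantee that $|V_{i_0}\setminus X|-2M\sqrt\varepsilon\,n-M>0$, whereas (iv) only gives $M\sqrt\varepsilon<1/r$. This is trivially repaired (either drop the unnecessary restriction $v'\notin X$, or note that $\varepsilon$ may be taken smaller), but you should cite the correct inequality.
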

\begin{proof}
    Suppose $G$ is not connected. Then $\lambda(G)=\lambda(G_1)$, where $G_1$ is a largest component of $G$. Let $G'$ be obtained from $G_1$ by choosing any vertex $v\in V(G_1)$ and adding a vertex $u$ pendant to $v$, along with $n-|V(G_1)|-1$ isolated vertices. Then $\lambda(G_1)>\lambda(G)\ge\lambda(T_{n,r})\to\infty$ and $G_1$ contains a copy of some $F\in\mathcal F$. On the other hand, we claim that $d_{G_1}(v)\le M$; otherwise we could replace $u$ with some other vertex to show that $G\supseteq F$. Since $v$ was arbitrary, we find that $\Delta(G_1)\le M$ and consequently $\lambda(G_1)\le M$, a contradiction.
\end{proof}

Let $x$ be the principal eigenvector of $G$, normalized so that its maximum entry is $1$. Let $1=x_z=\max\{x_v:v\in V\}$. By \autoref{connected}, the Perron-Frobenius theorem implies that all entries of $x$ are positive.

\begin{lemma} \label{z belongs somewhere}
    There exists a unique $i_1$ such that $d_{V_{i_1}}(z)\le\varepsilon_1n$ and for any $j\ne i_1$, we have $d_{V_j}(z)\ge (1/r-\varepsilon_1)n+O(1)$.
\end{lemma}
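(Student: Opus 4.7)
The plan is to derive everything from the lower bound $d(z)\ge(1-1/r)n-O(1)$, which comes from spectral information. Since $\chi(\mathcal F)=r+1$, the Tur\'an graph $T_{n,r}$ is $\mathcal F$-free, so $\lambda:=\lambda(G)\ge\lambda(T_{n,r})\ge(1-1/r)n-O(1)$. Applying the eigenvalue equation at $z$ (normalized so $x_z=1$) and using that $x_u\le 1$ for all $u$, I get
\[
\lambda=\lambda x_z=\sum_{u\sim z}x_u\le d(z),
\]
which yields the claimed lower bound on $d(z)$ and hence that $z$ has at most $n/r+O(1)$ non-neighbors in $V\setminus\{z\}$.

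For uniqueness of $i_1$, suppose for contradiction that $d_{V_i}(z),d_{V_{i'}}(z)\le\varepsilon_1n$ for some $i\ne i'$. Then
\[
d(z)\le d_{V_i}(z)+d_{V_{i'}}(z)+\sum_{j\ne i,i'}|V_j|\le 2\varepsilon_1n+(r-2)n/r+O(1),
\]
and combining with the lower bound on $d(z)$ above forces $1/r\le 2\varepsilon_1+O(1/n)$, which contradicts condition (ii) for $n$ sufficiently large.

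For existence of $i_1$, I would argue by contradiction: suppose $d_{V_i}(z)>\varepsilon_1n$ for every $i$. Then for each $i$, since $|X\cap V_i|\le\sqrt\varepsilon n$, $z$ has at least $\varepsilon_1n-\sqrt\varepsilon n$ neighbors in $V_i-X$, and each vertex outside $X$ has at most $2\sqrt\varepsilon n$ non-neighbors in the Tur\'an structure. Imitating the last case of the proof of \autoref{ordinary vertex replacement}, I would greedily select vertices $v_{i,1},\ldots,v_{i,m}\in N(z)\cap(V_i-X)$ for each $i$ that are pairwise adjacent across parts, using condition (i) to guarantee that enough candidates remain at each step. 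The resulting set $\{z\}\cup\{v_{i,j}\}$ induces a copy of $K_{1,m}+T_{m(r-1),r-1}$, contradicting \autoref{bad graphs spectral}. I expect this step to be the main obstacle: one must carefully verify that the quantitative slack in condition (i) is sufficient for all $mr$ greedy selections, despite $\varepsilon_1$ being chosen smaller than the analogous parameter in the edge-extremal proof.

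Finally, with the unique $i_1$ in hand, the lower bound on the remaining $d_{V_j}(z)$ is an easy counting consequence. Since $|V_{i_1}|=n/r+O(1)$ and $d_{V_{i_1}}(z)\le\varepsilon_1n$, $z$ has at least $n/r-\varepsilon_1n-O(1)$ non-neighbors lying inside $V_{i_1}$; but the total number of non-neighbors of $z$ is at most $n/r+O(1)$ by the first paragraph, so the non-neighbors of $z$ outside $V_{i_1}$ number at most $\varepsilon_1n+O(1)$. This yields $d_{V_j}(z)\ge|V_j|-\varepsilon_1n-O(1)\ge(1/r-\varepsilon_1)n+O(1)$ for every $j\ne i_1$, as required.
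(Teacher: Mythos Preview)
Your proposal is correct and matches the paper's proof in every step: the degree bound $d(z)\ge\lambda\ge(1-1/r)n+O(1)$, the greedy construction of $K_{1,m}+T_{m(r-1),r-1}$ for existence (contradicting \autoref{bad graphs spectral}), the contradiction with condition (ii) for uniqueness, and the final counting for the lower bound on $d_{V_j}(z)$. Your worry about whether condition (i) suffices for all $rm$ greedy selections is well-placed---in fact the paper's stated inequality $2(r+1)\sqrt\varepsilon<\varepsilon_1$ does not literally cover the last steps (one needs something of strength $(2rm+2)\sqrt\varepsilon<\varepsilon_1$, analogous to condition (v) in the edge-extremal section), so this is a harmless bookkeeping oversight shared by the paper and trivially repaired by adding that constraint.
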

\begin{proof}
    First, note that $d(z)\ge\lambda x_z=\lambda\ge\left(1-\frac{1}{r}\right)n+O(1)$. We claim there is a unique $i\in[r]$ such that $d_{V_i-X}(z)\le(\varepsilon_1-\sqrt\varepsilon)n$ and for any $j\ne i$, $d_{V_j}(z)\ge\frac{1}{r}n-\varepsilon_1n$. Suppose there is no $i$ such that $d_{V_i-X}(z)\le(\varepsilon_1-\sqrt\varepsilon)n$. By (i), there are some $m$ neighbors of $z$ in $V_1-X$, say $v_{11},\ldots,v_{1m}$. Having found $v_{11},\ldots,v_{im}$, we have
    $$|N(\{v,v_{11},\ldots,v_{im}\})\cap (V_{i+1}-X))|\ge(\varepsilon_1-\sqrt\varepsilon-2im\sqrt\varepsilon-\sqrt\varepsilon)n\ge m$$
    since $2(r+1)\sqrt\varepsilon<\varepsilon_1$, (i). Thus, we eventually find a copy of $K_{1,m}+T_{m(r-1),r-1}$ in $G$, contradicting \autoref{bad graphs spectral}. So some such $i_1$ exists. To prove the uniqueness, suppose there are $i\ne j$ such that $d_{V_i-X}(z),d_{V_j-X}(z)\le\varepsilon_1 n$; then we have $d(z)\le\left(1-\frac{2}{r}+2\varepsilon_1\right)n+O(1)$ which is a contradiction by (ii). Hence, there is a unique $i$ such that $d_{V_i-X}(z)\le\varepsilon_1n$. Finally, since $d(z)\ge\left(1-\frac{1}{r}\right)n+O(1)$, we have that for any $j\ne i$, we have $d_{V_j}(z)\ge(1/r-\varepsilon_1)n+O(1)$.
\end{proof}

Note that $\lambda\ge\lambda(T_{n,r})=\frac{r-1}{r}n+O(1)$.

\begin{lemma} \label{eigenweight estimate 1}
    For any $v\in V-X$, we have $x_v\ge 1-\varepsilon_2$.
\end{lemma}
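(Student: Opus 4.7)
The plan is to compare $\lambda x_v$ with $\lambda x_z=\lambda$ via the eigenvalue equation. I set $s_j:=\sum_{u\in V_j}x_u$ and $S:=\sum_{u\in V}x_u$. From $\lambda=\sum_{u\sim z}x_u$, splitting according to whether $u\in V_{i_1}$ and using Lemma~\ref{z belongs somewhere}'s bound $d_{V_{i_1}}(z)\le\varepsilon_1 n$ together with $x_u\le 1$, I get $\lambda\le(S-s_{i_1})+\varepsilon_1 n$. For $v\in V_i-X$, the non-neighbors of $v$ outside $V_i$ number at most $d_B(v)<2\sqrt\varepsilon n$, so discarding the non-negative within-part contribution yields $\lambda x_v\ge(S-s_i)-2\sqrt\varepsilon n$. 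Subtracting,
$$\lambda(1-x_v)\le(s_i-s_{i_1})+(\varepsilon_1+2\sqrt\varepsilon)n.$$

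When $v$ lies in the same part as $z$ (i.e.\ $i=i_1$), the $s_i-s_{i_1}$ term vanishes and, using $\lambda\ge\lambda(T_{n,r})\ge(r-1)n/r+O(1)$, condition (iii) immediately gives $1-x_v<\varepsilon_2$. The substantive case is $i\ne i_1$, where I need to control $s_i-s_{i_1}$. My strategy is to show that all part-sums $s_j$ are nearly equal, reflecting the near-regular structure of $G$.

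To prove this, I will sum the eigenvalue equation over $V_j$ to obtain $\lambda s_j=\sum_{u\in V}x_u\, d_{V_j}(u)$. For $u\in V\setminus V_j$ with $u\notin X$, $d_{V_j}(u)=|V_j|-d_B^{V_j}(u)$, and the aggregate missing-edge error $\sum_u d_B^{V_j}(u)$ is bounded by $e(B)\le\varepsilon n^2$; contributions from $u\in V_j$ are bounded by $2e(G[V_j])\le 2e(A)\le 2\varepsilon n^2$; contributions from $u\in X\setminus V_j$ are bounded by $|X|\cdot|V_j|=O(\sqrt\varepsilon n^2)$. Combining these yields $(\lambda+|V_j|)s_j=|V_j|S+O(\sqrt\varepsilon n^2)$, so $s_j=\tfrac{|V_j|S}{\lambda+|V_j|}+O(\sqrt\varepsilon n)$ after dividing by $\lambda+|V_j|\ge n$. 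Since $\bigl||V_i|-|V_{i_1}|\bigr|\le 1$, the closed-form principal terms differ by at most $S/\lambda=O(1)$, and I conclude $|s_i-s_{i_1}|=O(\sqrt\varepsilon n)$.

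Plugging back gives $\lambda(1-x_v)=O(\sqrt\varepsilon n+\varepsilon_1 n)$, and dividing by $\lambda\ge(r-1)n/r+O(1)$ yields $1-x_v=O\bigl(r(\sqrt\varepsilon+\varepsilon_1)/(r-1)\bigr)$, which fits within the $2(r+1)^2$ allowance in condition (iii), delivering $1-x_v<\varepsilon_2$. The main technical hurdle will be the third paragraph: tracking the constants in the $O(\sqrt\varepsilon n^2)$ error of the averaging identity carefully enough that, after division by $\lambda+|V_j|$, the resulting bound on $|s_i-s_{i_1}|$ is indeed small enough to be absorbed by the $(r+1)^2$ factor.
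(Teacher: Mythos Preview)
Your proposal is correct, and takes a genuinely different route from the paper's argument.

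The paper proceeds by a two-case bootstrap. For $v\in V_{i_1}$ it bounds $|N(z)-N(v)|\le\varepsilon_1 n+2(r-1)\sqrt\varepsilon\,n$ directly and obtains $x_v\ge 1-2r^2\varepsilon_1$ from $\lambda x_v\ge\lambda x_z-|N(z)-N(v)|$. For $v\notin V_{i_1}$ it then \emph{reuses} this freshly proved lower bound on the eigenweights of $V_{i_1}$-vertices to control the gain term $\sum_{u\in N(v)\setminus N(z)}x_u$, since $N(v)\setminus N(z)$ is almost all of $V_{i_1}$. Your approach avoids this bootstrap entirely: instead of comparing neighborhoods vertex-by-vertex, you sum the eigenvalue equation over each $V_j$ to get the averaging identity $(\lambda+|V_j|)s_j=|V_j|S+O(\varepsilon n^2)$, conclude that all part-sums $s_j$ agree up to $O(\sqrt\varepsilon n)$, and then the single inequality $\lambda(1-x_v)\le(s_i-s_{i_1})+(\varepsilon_1+2\sqrt\varepsilon)n$ finishes both cases at once.

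A couple of minor remarks on execution. Your separate treatment of $u\in X\setminus V_j$ in the averaging step is unnecessary: the identity $d_{V_j}(u)=|V_j|-d_B^{V_j}(u)$ holds for \emph{every} $u\notin V_j$, and the aggregate $B$-error is already $\le e(B)\le\varepsilon n^2$; this only simplifies your bookkeeping. Also, strictly speaking $\lambda+|V_j|\ge n-O(1)$ rather than $\ge n$, but this is harmless. When one tracks the constants (which you flag as the main hurdle), for $r\ge 3$ one obtains $1-x_v\le\tfrac{r}{r-1}(C\sqrt\varepsilon+\varepsilon_1)+o(1)$ with $C$ around $10$--$15$, comfortably below the $2(r+1)^2(\sqrt\varepsilon+\varepsilon_1)$ of condition (iii), so the argument closes.

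What each approach buys: the paper's bootstrap is slightly more elementary and makes the constants completely explicit at each step; your averaging argument is more systematic, handles all parts uniformly, and would adapt cleanly to settings where one wants to compare many part-sums simultaneously.
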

\begin{proof}
    First suppose $v\in V_{i_1}$. Since $v\not\in X$, we have
    $$|N(z)-N(v)|\le\varepsilon_1n+2(r-1)\sqrt\varepsilon n.$$
    and so
    $$\begin{aligned}
        \lambda x_v&\ge\lambda x_z-\sum_{u\in N(z)-N(v)}x_u\\
        &\ge\lambda x_z-(\varepsilon_1+2(r-1)\sqrt\varepsilon)n\\
        x_v&\ge 1-(\varepsilon_1+2(r-1)\sqrt\varepsilon)\frac{n}{\lambda}\\
        &\ge 1-\frac{2r-1}{r-1}\varepsilon_1\\
        &\ge 1-\frac{2r^2\varepsilon_1}{r-1}+o(1)\ge 1-2r^2\varepsilon_1.
    \end{aligned}$$
    Now if $v\not\in V_{i_1}$, then we have $$|N(z)-N(v)|\le(1/r+2r\sqrt\varepsilon)n+O(1)$$ and $$|(N(v)-N(z))\cap V_{i_1}|\ge(1/r-2\sqrt\varepsilon-\varepsilon_1)+O(1).$$
    Thus, applying the eigenweight bound above for vertices in $V_{i_1}$, we have
    $$\begin{aligned}
        \lambda x_v&\ge\lambda x_z-\sum_{u\in N(z)-N(v)}x_u+\sum_{u\in N(v)-N(z)}x_u\\
        &\ge \lambda-\frac{n}{r}-r2\sqrt\varepsilon n+\left(\frac{n}{r}-(2\sqrt{\varepsilon}+\varepsilon_1)n\right)(1-2r^2\varepsilon_1)+O(1)\\
        &\ge\lambda-\left(2(r+1)\sqrt\varepsilon+\varepsilon_1+2r\varepsilon_1\right)n+O(1)\\
        &\ge\lambda-2(r+1)(\sqrt\varepsilon+\varepsilon_1)n+O(1)\\
        x_v&\ge 1-2(r+1)(\sqrt\varepsilon+\varepsilon_1)\frac{n}{\lambda}+o(1)\ge1- \frac{2r(r+1)(\sqrt\varepsilon+\varepsilon_1)}{r-1}+o(1)\\
        &\ge 1-2(r+1)^2(\sqrt\varepsilon+\varepsilon_1).
    \end{aligned}$$
    The claim now follows from $2(r+1)^2(\sqrt\varepsilon+\varepsilon_1)<\varepsilon_2$, (iii).
\end{proof}

The next section of the proof follows the lines of the argument in \autoref{section structure of edge extremal}. Define a $V$-\textit{ordinary vertex} $v\in V_i$ to be one such that $N(v)=\bigcup_{j\ne i} V_j-X$.

\begin{lemma} \label{spectral ordinary vertex replacement}
    Let $v\in X$. Then there exists some $i$ such that $d_{V_i}(v)\le\varepsilon_3 n$ and $d_{V_j}(v)\ge n/r-\varepsilon_4 n$ for every $j\ne i$.
\end{lemma}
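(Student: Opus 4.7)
My plan is to follow the architecture of \autoref{ordinary vertex replacement}, replacing its edge-count extremality step with a Rayleigh-quotient argument driven by the principal eigenvector $x$ and the eigenweight estimate \autoref{eigenweight estimate 1}. Suppose for contradiction that no such $i$ exists. I will split into three cases: (A) two distinct indices $i_1,i_2$ satisfy $d_{V_{i_1}}(v),d_{V_{i_2}}(v)\le\varepsilon_3 n$; (B) exactly one index $i$ satisfies $d_{V_i}(v)\le\varepsilon_3 n$, and some $j\ne i$ satisfies $d_{V_j}(v)<n/r-\varepsilon_4 n$; (C) $d_{V_i}(v)>\varepsilon_3 n$ for every $i$.

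Case (C) is purely combinatorial and mirrors the final case of \autoref{ordinary vertex replacement}. I greedily select $m$ vertices in each $V_i-X$ lying in the common neighborhood of $v$ and all previously chosen vertices, using $d_{V_i-X}(v)\ge(\varepsilon_3-\sqrt\varepsilon)n\ge m$, $|X|\le\sqrt\varepsilon n$, and $d_B(u)\le 2\sqrt\varepsilon n$ for $u\notin X$. Condition (vii), $(2mr+1)\sqrt\varepsilon<\varepsilon_3$, makes the induction go through and produces a copy of $K_{1,m}+T_{m(r-1),r-1}$ centered at $v$, contradicting \autoref{bad graphs spectral}.

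In cases (A) and (B), I perform a spectral replacement. Pick $i$ to be $i_1$ in case (A) or the unique such index in case (B), and let $G'$ be obtained from $G$ by deleting $v$ and adding a new vertex $u$ with $N_{G'}(u)=S_i:=\bigcup_{j\ne i}(V_j-X)$. First, $G'$ is $\mathcal F$-free: if some $F\in\mathcal F$ embeds into $G'$ then $u\in V(F)$, $N_F(u)\subseteq V-V_i-X$ has size at most $M$, and by condition (iv) there are more than $M+1$ vertices of $(V_i-X)\cap N_G(N_F(u))$ outside $V(F)\cup\{v\}$, so swapping $u$ for any such vertex yields $F\subseteq G$, a contradiction. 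Second, using the test vector $x'$ with $x'_w=x_w$ for $w\ne u$ and $x'_u=x_v$ (so $\|x'\|^2=\|x\|^2$), the Rayleigh characterization yields
$$\lambda(G')-\lambda(G)\ge\frac{2x_v}{\|x\|^2}\left(\sum_{w\in S_i\setminus N_G(v)}x_w-\sum_{w\in N_G(v)\setminus S_i}x_w\right).$$
The subtracted sum lies in $(V_i\cup X)\cap N_G(v)$ and is at most $(\varepsilon_3+\sqrt\varepsilon)n$ since $x_w\le1$. For the added sum, pick $j\ne i$ with $d_{V_j}(v)<n/r-\varepsilon_4 n$ (available in (B) by hypothesis and in (A) with $j=i_2$, since $\varepsilon_3<1/r-\varepsilon_4$ follows from (vi)). Then $|(V_j-X)\setminus N_G(v)|\ge(\varepsilon_4-\sqrt\varepsilon)n-O(1)$, and by \autoref{eigenweight estimate 1} each such weight is at least $1-\varepsilon_2$, so the added sum is at least $(\varepsilon_4-\sqrt\varepsilon)(1-\varepsilon_2)n-O(1)$. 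Condition (vi), $\sqrt\varepsilon+\varepsilon_2+\varepsilon_3<\varepsilon_4$, then forces the bracket to be strictly positive, giving $\lambda(G')>\lambda(G)$ and contradicting $G\in\mathrm{SPEX}(n,\mathcal F)$.

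The main obstacle I expect is not the structural parallel with \autoref{ordinary vertex replacement}, but the careful bookkeeping required to ensure the Rayleigh difference is sign-definite once the eigenweight factor $1-\varepsilon_2$ is incorporated: conditions (iii), (vi), and (vii) are tuned precisely so that the loss coming from $V_i\cup X$ is dominated by the gain coming from high-eigenweight vertices in $V_j-X$, and verifying this with room to spare is the only place where the constant-choosing setup really bites.
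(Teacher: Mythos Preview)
Your argument is correct and matches the paper's approach: ordinary-vertex replacement combined with a Rayleigh-quotient comparison, with case (C) handled identically; the paper organizes cases (A) and (B) via total-degree bounds, invoking condition (v) for (A) and (vi) for (B), whereas you unify them through the symmetric difference, but this is cosmetic. One minor slip: the inequality $\varepsilon_3<1/r-\varepsilon_4$ you cite in case (A) does not follow from (vi) alone (which only yields $\varepsilon_3<\varepsilon_4$); it does hold, however, via (viii) and (ix), so the argument stands.
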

\begin{proof}
    Suppose not. Note that a $V$-ordinary vertex satisfies $d(u)\ge(1-1/r-\sqrt\varepsilon)n+O(1)$.
    \begin{claim} \label{spectral ordinary vertex replacement claim 1}
        If $G'$ is obtained from $G$ by deleting some vertex and adding a $V$-ordinary vertex $u\in V_i$, then $G'$ is $\mathcal F$-free.
    \end{claim}
    \begin{proof}
    We copy the proof of \autoref{ordinary vertex replacement claim 1}, using $M\sqrt\varepsilon<1/r$, (iv).
    \end{proof}
    If there are two different indices $i,i'$ such that $d_{V_i}(v),d_{V_{i'}}(v)\le\varepsilon_3n$, then we have
    $$d(v)\le(1-2/r+2\varepsilon_3)n+O(1).$$
    By replacing $v$ with a $V$-ordinary vertex, we obtain that
    $$\begin{aligned}\frac{1}{2x_v}(x^TA(G')x-x^TA(G)x)&\ge (1-1/r-\sqrt\varepsilon)(1-\varepsilon_2)n-(1-2/r+2\varepsilon_3)n+O(1)\\
    &\ge(1/r-\sqrt\varepsilon-\varepsilon_2-2\varepsilon_3)n+O(1)>0
    \end{aligned}$$
    since $\sqrt\varepsilon+\varepsilon_2+2\varepsilon_3<1/r$, (v), which by \autoref{spectral ordinary vertex replacement claim 1} contradicts $G\in\mathrm{SPEX}(n,\mathcal F)$. If there is a unique $i$ such that $d_{V_i}(v)\le\varepsilon_3n$, then by assumption there is some $j$ for which $d_{V_j}(v)\le n/r-\varepsilon_4 n$. Thus
    $$d(v)\le(1-1/r+\varepsilon_3-\varepsilon_4)n+O(1).$$
    Then replacing $v$ by a $V$-ordinary vertex gives
    $$\begin{aligned}
        \frac{1}{2x_v}(x^TA(G')x-x^TA(G)x)&\ge(1-1/r-\sqrt\varepsilon)(1-\varepsilon_2)n-(1-1/r+\varepsilon_3-\varepsilon_4)n+O(1)\\
        &\ge (-\sqrt\varepsilon-\varepsilon_2-\varepsilon_3+\varepsilon_4)n+O(1)>0
    \end{aligned}$$
    since $\sqrt\varepsilon+\varepsilon_2+\varepsilon_3<\varepsilon_4$, (vi), which by \autoref{spectral ordinary vertex replacement claim 1} contradicts $G\in\mathrm{SPEX}(n,\mathcal F)$. Finally, suppose there is no index $i$ for which $d_{V_i}(v)\le\varepsilon_3 n$. Then using $(2mr+1)\sqrt\varepsilon<\varepsilon_3$, (vii) and a similar argument to \autoref{ordinary vertex replacement}, we find a $K_{1,m}+T_{m(r-1),r-1}$, which contradicts \autoref{bad graphs spectral}.
\end{proof}

By \autoref{spectral ordinary vertex replacement}, we may define a new partition $V=W_1\sqcup\cdots\sqcup W_r$ as follows. For $v\in V_i-X$, let $v\in W_i$. If $v\in X$, then let $v\in W_i$ where $i$ is chosen so that $d_{V_i}(v)\le\varepsilon_3n$ and $d_{V_j}(v)\ge n/r-\varepsilon_4n$ for each $j\ne i$. Let $w_i:=|W_i|$. Note that $w_i\in(n/r-\sqrt\varepsilon n+O(1),n/r+\sqrt\varepsilon n+O(1))$. Thus for each $v\in W_i$, for each $j\ne i$, either $d_{W_j}(v)\ge w_j-(\varepsilon_4+\sqrt\varepsilon)n+O(1)\ge w_j-\varepsilon_5n$ or $d_{W_j}(v)\ge w_j-(2\sqrt\varepsilon+\sqrt\varepsilon)n+O(1)\ge w_j-\varepsilon_5n$, since $\max\{\varepsilon_4+\sqrt\varepsilon,3\sqrt\varepsilon\}<\varepsilon_5$, (viii). Also, call a vertex $v\in W_i$ $W$-\textit{ordinary} if $N(v)=V-W_i$.

\begin{lemma} \label{automatic joining spectral}
    Let $K$ be a fixed graph of order at most $m(k+1)$. For $n$ large enough, if $G[W_i]\supseteq K$ for some $i$ then $G\supseteq K+T_{m(r-1),r-1}$.
\end{lemma}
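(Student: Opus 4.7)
The plan is to mimic the proof of \autoref{automatic joining} essentially verbatim, using the refined $\varepsilon_5$-estimate established just before the lemma statement (namely, for each $v\in W_i$ and each $j\ne i$, $d_{W_j}(v)\ge w_j-\varepsilon_5 n$) in place of the previous $\varepsilon_3$-estimate. Nothing spectral is actually needed in this step — the lemma is a purely structural counting statement — so the only real difference from the earlier proof is bookkeeping with the new constants, which have already been arranged (conditions (ix) and (x)) to make the greedy argument go through.

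First I would assume without loss of generality that $i=1$ and count common neighbors of $V(K)$ in $W_2$. Since $|V(K)|\le m(k+1)$, each vertex of $V(K)$ misses at most $\varepsilon_5 n$ vertices of $W_2$, so
$$|N(V(K))\cap W_2|\ge w_2-m(k+1)\varepsilon_5 n+O(1)\ge\bigl(1/r-\sqrt\varepsilon-m(k+1)\varepsilon_5\bigr)n+O(1)\ge m,$$
using $w_2\ge n/r-\sqrt\varepsilon n+O(1)$ and inequality (ix). Pick any $m$ such common neighbors $v_{21},\dots,v_{2m}$.

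Having selected $v_{21},\dots,v_{im}$, to extend into $W_{i+1}$ I bound
$$\begin{aligned}
\bigl|N(V(K)\cup\{v_{21},\dots,v_{im}\})\cap W_{i+1}\bigr|&\ge w_{i+1}-\bigl(|V(K)|+(i-1)m\bigr)\varepsilon_5 n+O(1)\\
&\ge\bigl(1/r-\sqrt\varepsilon-m(k+r)\varepsilon_5\bigr)n+O(1)\ge m,
\end{aligned}$$
where the final inequality uses (x) (note that $|V(K)|+(i-1)m\le m(k+1)+(r-1)m=m(k+r)$). Iterating this greedy selection all the way to $W_r$ produces vertices $\{v_{ij}\}_{2\le i\le r,\,1\le j\le m}$ such that $G[V(K)\cup\{v_{ij}\}]\supseteq K+T_{m(r-1),r-1}$, completing the proof.

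The main ``obstacle'' is really no obstacle at all, only a check: I need to confirm that the tolerances (ix) and (x) were chosen with precisely this greedy chain in mind, so that the lower bound on the common-neighborhood size stays bounded below by $m$ at every one of the $r-1$ stages. Since those inequalities were stipulated up front exactly to mirror conditions (vii) and (viii) of the edge-extremal case, the argument is a direct transcription and no new idea is required.
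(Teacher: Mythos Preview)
Your proposal is correct and follows essentially the same approach as the paper: the paper's own proof simply says to copy the proof of \autoref{automatic joining} using inequalities (ix) and (x), which is exactly what you have written out in detail.
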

\begin{proof}
    We copy the proof of \autoref{automatic joining}, using the inequalities $\sqrt\varepsilon+m(k+1)\varepsilon_5<1/r$, (ix), and $\sqrt\varepsilon+m(k+r)\varepsilon_5<1/r$, (x).
\end{proof}

\begin{lemma} \label{ordinary vertex replacement 2 spectral}
    Let $G'$ be obtained from $G$ by deleting some vertex and adding a $W$-ordinary vertex $u\in V_i$. Then $G'$ is $\mathcal F$-free.
\end{lemma}
\begin{proof}
    We copy the proof of \autoref{ordinary vertex replacement 2}, using the inequality $\sqrt\varepsilon+M\varepsilon_5<1/r$, (xi).
\end{proof}

\begin{lemma} \label{spectral improvement}
    For each $W_i$, for each $v\in W_i$, we have $d_{W_i}(v)\le m$.
\end{lemma}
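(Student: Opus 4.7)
The plan is to mirror the one-line proof of \autoref{easy improvement} from the edge-extremal section, simply replacing each of its two inputs with its spectral counterpart that has already been established above. Nothing in this particular lemma requires the eigenvector $x$ or spectral perturbation arguments; we only need the combinatorial structure of $W_i$ together with the ``forbidden configurations'' list.

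First I would argue by contradiction and suppose that some vertex $v \in W_i$ has $d_{W_i}(v) \ge m+1$. Picking any $m$ neighbors $u_1,\ldots,u_m \in N_{W_i}(v)$, the induced subgraph $G[\{v,u_1,\ldots,u_m\}]$ contains a star $K_{1,m}$ sitting entirely inside $W_i$. I would then apply \autoref{automatic joining spectral} with $K = K_{1,m}$. Since $K_{1,m}$ has order $m+1$, and we may assume $k \ge 1$ (as noted immediately before \autoref{bad graphs spectral}, the case $k=1$ is already covered by \autoref{non bipartite paper result}, so in fact $k \ge 2$ whenever needed), the bound $|V(K)| \le m(k+1)$ required by \autoref{automatic joining spectral} holds comfortably. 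The conclusion of that lemma gives $K_{1,m} + T_{m(r-1),r-1} \subseteq G$.

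Finally, I would invoke \autoref{bad graphs spectral}, whose first bullet asserts precisely that $K_{1,m} + T_{m(r-1),r-1}$ is not $\mathcal F$-free. Combined with the previous step, this produces a copy of some $F \in \mathcal F$ inside $G$, contradicting $G \in \mathrm{SPEX}(n,\mathcal F)$ (and in particular the fact that $G$ must be $\mathcal F$-free).

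There is no real obstacle to this argument: the heavy lifting has already been done in \autoref{automatic joining spectral} (which took care of converting a small structure inside one $W_i$ into the full join against the other parts, using the choice of constants $\varepsilon_5$ and (ix)--(x)) and in \autoref{bad graphs spectral} (which listed the forbidden configurations). The present lemma is just the bookkeeping consequence of chaining these two facts, exactly as in its edge-extremal analogue.
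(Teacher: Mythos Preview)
Your proposal is correct and matches the paper's approach essentially line for line: assume $d_{W_i}(v)\ge m+1$, find a $K_{1,m}$ inside $G[W_i]$, apply \autoref{automatic joining spectral} to join it to a $T_{m(r-1),r-1}$, and contradict \autoref{bad graphs spectral}. The paper's one-line proof additionally cites \autoref{ordinary vertex replacement 2 spectral}, but that reference is not actually needed here (it is used in the \emph{next} lemma, \autoref{spectral improvement 2}); your two-ingredient version, mirroring the edge-extremal \autoref{easy improvement} exactly, is already complete.
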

\begin{proof}
    This follows immediately from \autoref{automatic joining spectral} and \autoref{bad graphs spectral}.
\end{proof}

\begin{lemma} \label{eigenweight estimate 2}
    For every $v\in V$, we have $x_v\ge 1-\varepsilon_6$.
\end{lemma}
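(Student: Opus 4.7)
The plan is to split on whether $v$ lies in the same part $W_{i_1}$ as the maximum-eigenweight vertex $z$. Recall we have $z \in W_{i_1}$, and from the definition of the $W$-partition every $v \in W_i$ satisfies $d_{W_j}(v) \ge w_j - \varepsilon_5 n$ for each $j \ne i$, while Lemma \ref{spectral improvement} gives $d_{W_i}(v) \le m$. We also have $\lambda \ge \lambda(T_{n,r}) = \frac{r-1}{r}n+O(1)$ and, since $\lambda \le \Delta(G)$ and every vertex's degree is at most $n - w_i + m$, the matching upper bound $\lambda \le \frac{r-1}{r}n + \sqrt{\varepsilon}n + O(1)$.

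\emph{Case 1: $v \in W_{i_1}$.} Starting from $\lambda x_z = \sum_{u \sim z} x_u$ and $\lambda x_v = \sum_{u \sim v} x_u$, subtract and use $x_u \le 1$ to obtain
\[ \lambda(1 - x_v) \le |N(z) \setminus N(v)|. \]
To bound the right-hand side, a vertex in $N(z) \setminus N(v)$ either lies in $W_{i_1}$, contributing at most $d_{W_{i_1}}(z) \le m$, or lies in some $W_j$ with $j \ne i_1$, contributing at most $|W_j \setminus N(v)| \le \varepsilon_5 n$. Hence $|N(z) \setminus N(v)| \le m + (r-1)\varepsilon_5 n$, and dividing by $\lambda \ge \frac{r-1}{r}n - O(1)$ yields $1 - x_v \le r\varepsilon_5 + o(1)$, which is $< \varepsilon_6$ by (xii).

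\emph{Case 2: $v \in W_i$ with $i \ne i_1$.} Direct comparison with $z$ now fails: $N(z) \setminus N(v)$ contains roughly all of $W_i$, a set of size $\sim n/r$. Instead, lower bound $\lambda x_v$ directly using Lemma \ref{eigenweight estimate 1}:
\[ \lambda x_v \ge \sum_{u \in N(v) \setminus X} x_u \ge (d(v) - |X|)(1 - \varepsilon_2). \]
From $d(v) \ge \sum_{j \ne i}(w_j - \varepsilon_5 n) = n - w_i - (r-1)\varepsilon_5 n \ge \frac{r-1}{r}n - \sqrt{\varepsilon}n - (r-1)\varepsilon_5 n + O(1)$ and $|X| \le \sqrt{\varepsilon}n$, combined with the upper bound on $\lambda$ above, a routine expansion of $(a-b)(1-c)/(a+d)$ for small $b,c,d$ gives $x_v \ge 1 - \varepsilon_2 - \frac{3r\sqrt{\varepsilon}}{r-1} - r\varepsilon_5 - o(1)$, which is $\ge 1 - \varepsilon_6$ by (xiii) (using $\varepsilon_5 > \varepsilon_2$ from (vi) and (viii) to verify that (xiii) dominates the required inequality).

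The main obstacle is the asymptotic bookkeeping in Case 2: one must track the $\sqrt{\varepsilon}n$ slack in both $w_i$ and the upper bound on $\lambda$, and then verify that the resulting numerical constant is absorbed by (xiii). The reason (xii) and (xiii) look different is precisely that Case 1 admits the sharper comparison-to-$z$ argument with no $\varepsilon_2$ loss, whereas Case 2 must incur the $(1-\varepsilon_2)$ factor from applying Lemma \ref{eigenweight estimate 1} to the many neighbors of $v$ lying outside $X$.
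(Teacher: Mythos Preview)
Your proof is correct. Case 1 is essentially the paper's argument, even a bit sharper: the paper bounds $d_{W_{i_1}}(z)$ by roughly $\varepsilon_1 n$ via Lemma~\ref{z belongs somewhere}, whereas you use the constant bound $m$ from Lemma~\ref{spectral improvement}.

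Case 2 takes a genuinely different route. The paper still compares $\lambda x_v$ to $\lambda x_z$: it accepts the large loss $|N(z)\setminus N(v)|\approx n/r$ (all of $W_i$) but then recovers it by noting that $|(N(v)\setminus N(z))\setminus X|\approx n/r$ as well (most of $W_{i_1}$), and applies Lemma~\ref{eigenweight estimate 1} to that recovered set. Your argument instead lower-bounds $\lambda x_v$ directly by $(d(v)-|X|)(1-\varepsilon_2)$ and compensates not with $N(v)\setminus N(z)$ but with the matching \emph{upper} bound $\lambda\le \frac{r-1}{r}n+\sqrt\varepsilon\, n+O(1)$ obtained from Lemma~\ref{spectral improvement}. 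The trade-off: the paper's approach never needs a sharp upper bound on $\lambda$, while yours avoids the ``give-and-take'' bookkeeping of two symmetric-difference pieces. Both land within the slack afforded by (xiii) once one observes $\varepsilon_2<\varepsilon_5$ from (vi) and (viii), as you note.
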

\begin{proof}
    Let $z\in W_{i_1}$. If $v\in W_{i_1}$, then we have
    $$|N(z)-N(v)|\le(\varepsilon_1+(r-1)\varepsilon_5)n$$
    so
    $$\begin{aligned}
        \lambda x_v&\ge\lambda x_z-\sum_{u\in N(z)-N(v)}x_u\\
        &\ge\lambda-(\varepsilon_1+(r-1)\varepsilon_5)n\\
        x_v&\ge 1-\frac{(\varepsilon_1+(r-1)\varepsilon_5)n}{\lambda}\\
        &\ge 1-\frac{r(\varepsilon_1+(r-1)\varepsilon_5)}{r-1}+o(1).\\
    \end{aligned}$$
    If $v\in W_j\ne W_{i_1}$, then we have
    $$|N(z)-N(v)|\le w_{j}+(r-1)\varepsilon_5n\le(1/r+\sqrt\varepsilon+(r-1)\varepsilon_5)n+O(1)$$
    and
    $$|(N(v)-N(z))-X|\ge w_{i_1}-\varepsilon_5n-(\varepsilon_1+\sqrt\varepsilon)n-\sqrt\varepsilon n+O(1)\\
    \ge(1/r-\varepsilon_5-\varepsilon_1-2\sqrt\varepsilon)n+O(1).$$
    Thus using \autoref{eigenweight estimate 1} we have
    $$\begin{aligned}
        \lambda x_v&=\lambda x_z+\sum_{u\in N(v)-N(z)}x_u-\sum_{u\in N(z)-N(v)}x_u\\
        &\ge \lambda+(1/r-\varepsilon_5-\varepsilon_1-2\sqrt\varepsilon)(1-\varepsilon_2)n-(1/r+\sqrt\varepsilon+(r-1)\varepsilon_5)n+O(1)\\
        &\ge\lambda-(\varepsilon_5+\varepsilon_1+3\sqrt\varepsilon+\varepsilon_2/r+(r-1)\varepsilon_5)n+O(1)\\
        x_v&\ge 1-\frac{r}{r-1}(\varepsilon_5+\varepsilon_1+3\sqrt\varepsilon+\varepsilon_2/r+(r-1)\varepsilon_5)n+o(1)
    \end{aligned}$$
    and the conclusion follows from $\frac{r(\varepsilon_1+(r-1)\varepsilon_5)}{r-1}<\varepsilon_6$, (xii) and $\frac{r}{r-1}(\varepsilon_5+\varepsilon_1+3\sqrt\varepsilon+\varepsilon_2/r+(r-1)\varepsilon_5)<\varepsilon_6$, (xiii).
\end{proof}

\begin{lemma} \label{spectral improvement 2}
    For each $v\in W_i$, for each $j\ne i$, we have $d_{W_j}(v)\ge w_j-m$.
\end{lemma}
\begin{proof}
    Suppose not. By \autoref{ordinary vertex replacement 2 spectral}, the graph $G'$ obtained by replacing $v$ with a $W$-ordinary vertex is $\mathcal F$-free, and by \autoref{spectral improvement} and \autoref{eigenweight estimate 2} we have
    $$\begin{aligned}
        \frac{1}{2x_v}(x^TA(G')x-x^TA(G)x)&\ge(m+1)(1-\varepsilon_6)-m=1-(m+1)\varepsilon_6>0
    \end{aligned}$$
    since $(m+1)\varepsilon_6<1$, (xiv), a contradiction.
\end{proof}

\begin{lemma} \label{spectral one part}
    For some constant $C$, there is at most one $i$ such that $e(W_i)\ge C$.
\end{lemma}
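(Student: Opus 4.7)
The plan is to mirror, almost verbatim, the argument used for \autoref{strong one part} in the edge-extremal setting, with \autoref{easy improvement} replaced by \autoref{spectral improvement}, \autoref{easy improvement 2} replaced by \autoref{spectral improvement 2}, and \autoref{bad graphs} replaced by \autoref{bad graphs spectral}. The key point is that once we have bounded degree inside each $W_i$ and almost-complete bipartite behavior between the $W_i$'s, the existence of two parts carrying many internal edges already forces a forbidden configuration, and this is a purely combinatorial statement that does not involve the eigenvector $x$ at all.

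In detail, suppose for contradiction that no such constant $C$ exists, so we may pass to a subsequence where $e(W_1), e(W_2) \to \infty$. By \autoref{spectral improvement}, the induced graphs $G[W_1]$ and $G[W_2]$ have maximum degree at most $m$. A standard greedy argument shows that a graph with maximum degree at most $m$ and at least $\binom{m}{2}\cdot t + 1$ edges contains a matching of size $t$; thus for $n$ large enough we may extract a matching $M_1 \subseteq G[W_1]$ of size $m$ and a matching $M_2 \subseteq G[W_2]$ of size $2(m+1)m$. By \autoref{spectral improvement 2}, each of the $2m$ vertices of $V(M_1)$ is non-adjacent to at most $m$ vertices of $W_2$, so at most $2m^2$ edges of $M_2$ have an endpoint missed by some vertex of $M_1$; discarding these leaves a sub-matching $M_2'$ of size $m$ with $G[V(M_1), V(M_2')] \simeq K_{2m,2m}$.

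Having built $M_1 \cup M_2'$, we now extend into the remaining parts $W_3,\ldots,W_r$ one at a time. Suppose we have already selected $v_{31},\ldots,v_{im}$ completely joined (inside $G$) to $V(M_1)\cup V(M_2')\cup\{v_{31},\ldots,v_{im}\}$. Then
\[
|N(V(M_1)\cup V(M_2')\cup\{v_{31},\ldots,v_{im}\})\cap W_{i+1}|
\ge w_{i+1} - (2m + (i-2)m)\varepsilon_5 n + O(1)
\ge (1/r - \sqrt\varepsilon - 2mr\varepsilon_5)n + O(1),
\]
which is at least $m$ by inequality (xv). Pick $m$ such vertices as $v_{i+1,1},\ldots,v_{i+1,m}$ and continue until $i+1=r$. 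The resulting induced subgraph on $V(M_1)\cup V(M_2')\cup\{v_{31},\ldots,v_{rm}\}$ contains a copy of $M_m + M_m + T_{m(r-2),r-2}$, contradicting \autoref{bad graphs spectral}.

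The main (and essentially only) obstacle is bookkeeping: one has to check that the constants chosen in the list (i)–(xvii) are strong enough to support the greedy extension across all $r-2$ remaining parts once $2m$ ``anchor'' vertices from $M_1\cup M_2'$ have been fixed, and this is exactly what (xv) is designed for. No new use of the eigenvector $x$ or of \autoref{eigenweight estimate 2} is required, since the argument only needs $\mathcal F$-freeness of $G$, the partition $V=W_1\sqcup\cdots\sqcup W_r$, and the combinatorial bounds on degrees inside and between the parts.
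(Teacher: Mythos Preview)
Your proposal is correct and follows essentially the same approach as the paper, which simply states that one copies the proof of \autoref{strong one part} using inequality (xv) in place of (x). Your substitutions of \autoref{spectral improvement}, \autoref{spectral improvement 2}, and \autoref{bad graphs spectral} for their edge-extremal analogues are exactly what is intended, and your observation that no eigenvector information is needed here is accurate.
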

\begin{proof}
    We copy the proof of \autoref{strong one part}, using $\sqrt\varepsilon+2mr\varepsilon_5<1/r$ (xv).
\end{proof}

Assume without loss of generality that $W_1$ is the part for which $e(W_i)\ge C$ is possible, so that $e(W_i)=O(1)$ for all $i\ne 1$. Observe that for each $i\ne 1$, we may write $W_i=W_i^o\sqcup W_i^m$, where $e(W_i^o)=0$ and $|W_i^m|=O(1)$. We will also define $W^o=\bigcup_{i=1}^rW_i^o$ and $W^m=\bigcup_{i=1}^rW_i^m$.

\begin{lemma} \label{spectral ordinary vertices}
    For each $i\ne 1$, for each $v\in W_i^o$, $N(v)=V-W_i$.
\end{lemma}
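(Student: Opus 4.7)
The plan is to argue by contradiction, in close analogy with the proof of Lemma \ref{spectral improvement 2}. First I would unpack what $v \in W_i^o$ means for $i \ne 1$: by the decomposition introduced just above the lemma, $e(W_i^o) = 0$, so $d_{W_i}(v) = 0$ and hence $N(v) \subseteq V - W_i$ automatically. Thus the content of the lemma is the reverse inclusion $V - W_i \subseteq N(v)$, and I would suppose for contradiction that there exists some $u \in V - W_i$ with $u \not\sim v$.

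Next I would construct $G'$ from $G$ by replacing $v$ with a new ordinary vertex $v' \in W_i$, meaning one whose neighborhood is exactly $V - W_i$. By Lemma \ref{ordinary vertex replacement 2 spectral}, $G'$ remains $\mathcal{F}$-free, so $\lambda(G') \le \lambda(G)$ by the assumption $G \in \mathrm{SPEX}(n, \mathcal{F})$. The goal is to contradict this by exhibiting a strictly positive change in the quadratic form $x^\top A(\cdot)\, x$, where $x$ is the principal eigenvector of $G$ from Lemma \ref{z belongs somewhere} and onward.

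The key computation is that only the edges incident to the swapped vertex change, and because $N_G(v) \subseteq V - W_i = N_{G'}(v')$, the modification is a pure addition of edges. Therefore
\[
\tfrac{1}{2 x_v}\bigl(x^\top A(G') x - x^\top A(G) x\bigr) \;=\; \sum_{w \in (V - W_i) \setminus N_G(v)} x_w \;\ge\; 1 - \varepsilon_6 \;>\; 0,
\]
where the first inequality uses that the sum contains at least the missing neighbor $u$, together with the uniform lower bound $x_w \ge 1 - \varepsilon_6$ from Lemma \ref{eigenweight estimate 2}, and the final strict inequality follows from $\varepsilon_6 < 1$ (which is immediate from (xiv)). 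This yields $\lambda(G') > \lambda(G)$, the desired contradiction.

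I do not anticipate any real obstacle here: all the heavy machinery, namely the uniform eigenweight estimate (Lemma \ref{eigenweight estimate 2}) and the preservation of $\mathcal{F}$-freeness under the ordinary-vertex replacement (Lemma \ref{ordinary vertex replacement 2 spectral}), has already been established. The argument is essentially a sharper asymmetric variant of Lemma \ref{spectral improvement 2} that exploits the crucial fact $d_{W_i}(v) = 0$ for $i \ne 1$, which means no existing edges need to be subtracted when we make $v$ ordinary; consequently even a single missing cross-part neighbor suffices to produce the spectral gain, without having to offset any intra-part edges.
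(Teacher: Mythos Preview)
Your proof is correct and follows essentially the same approach as the paper: both observe that since $d_{W_i}(v)=0$, replacing $v$ by an ordinary vertex is a pure edge addition, which strictly increases the spectral radius and contradicts $G\in\mathrm{SPEX}(n,\mathcal F)$. The only cosmetic difference is that the paper invokes the general fact that adding an edge to a connected graph increases its spectral radius (so it does not even need Lemma~\ref{eigenweight estimate 2}), whereas you carry out the quadratic-form comparison explicitly.
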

\begin{proof}
    We copy the proof of \autoref{ordinary vertices}, using the fact that adding an edge to a connected graph increases its spectral radius.
\end{proof}

\begin{lemma} \label{spectral general lemma}
    Suppose $G'$ can be obtained from $G$ by deleting $a$ edges and adding $b$ edges, where $a/b<1-2\varepsilon_6$. Then $\lambda(G')>\lambda(G)$.
\end{lemma}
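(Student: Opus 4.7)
The plan is to use the Rayleigh quotient characterization of the spectral radius together with the eigenweight bound from Lemma \autoref{eigenweight estimate 2}. Let $x$ denote the principal eigenvector of $G$ normalized so that its maximum entry is $1$. Since $G$ is connected by Lemma \autoref{connected}, Perron--Frobenius gives $x>0$, and in fact Lemma \autoref{eigenweight estimate 2} says $x_v\ge 1-\varepsilon_6$ for every $v\in V(G)$, while the normalization forces $x_v\le 1$.

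The key identity is
\[
\lambda(G')-\lambda(G)\ge \frac{x^TA(G')x-x^TA(G)x}{x^Tx}=\frac{2\sum_{uv\in E(G')\setminus E(G)}x_ux_v-2\sum_{uv\in E(G)\setminus E(G')}x_ux_v}{x^Tx},
\]
since $\lambda(G)=x^TA(G)x/x^Tx$ and $\lambda(G')\ge y^TA(G')y/y^Ty$ for any nonzero vector $y$. I would then estimate the numerator termwise: for each of the $b$ added edges $uv$, we have $x_ux_v\ge(1-\varepsilon_6)^2\ge 1-2\varepsilon_6$, and for each of the $a$ deleted edges $uv$, we have $x_ux_v\le 1$. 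Hence
\[
x^TA(G')x-x^TA(G)x\ \ge\ 2\bigl(b(1-2\varepsilon_6)-a\bigr).
\]

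The hypothesis $a/b<1-2\varepsilon_6$ rearranges to $b(1-2\varepsilon_6)>a$, making the right-hand side strictly positive, and since $x^Tx>0$ we conclude $\lambda(G')>\lambda(G)$. There is essentially no obstacle here beyond bookkeeping: the whole point of the carefully chosen constant $\varepsilon_6$ in Lemma \autoref{eigenweight estimate 2} is to give the two-sided control $1-\varepsilon_6\le x_v\le 1$ that makes this edge-swap comparison go through. The resulting lemma is the workhorse we will invoke throughout the remainder of the spectral analysis: conditions (xvi) and (xvii), namely $2\varepsilon_6<1-\tfrac{k(k-2)}{(k-1)^2}$ and $2\varepsilon_6<1-\tfrac{k-2}{k-1}$, are precisely the thresholds needed so that replacing smaller tree components of $G[W_1^o]$ by $k$-vertex trees, or reassigning vertices between parts, satisfies the hypothesis $a/b<1-2\varepsilon_6$ and therefore strictly increases $\lambda$, contradicting $G\in\mathrm{SPEX}(n,\mathcal F)$.
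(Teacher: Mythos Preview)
Your proof is correct and follows essentially the same route as the paper: use the Rayleigh quotient with the Perron vector $x$ of $G$, bound each added edge's contribution below by $(1-\varepsilon_6)^2>1-2\varepsilon_6$ via Lemma~\ref{eigenweight estimate 2} and each deleted edge's contribution above by $1$, and conclude from $a<b(1-2\varepsilon_6)$. The paper's proof is just the one-line version of what you wrote.
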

\begin{proof}
    By \autoref{eigenweight estimate 2}, we have
    $$\frac{1}{2}(x^TA(G')x-x^TA(G)x)\ge b(1-\varepsilon_6)^2-a>b(1-2\varepsilon_6)-a>0.$$
\end{proof}

\begin{lemma} \label{spectral structure of w1}
    We can write $W_1=W_1^o\sqcup W_1^m$, where $W_1^o$ is a union of trees of fixed order $k$, each tree component of $G[W_1^o]$ occurs more than $M(m+1)$ times, and $|W_1^m|=O(1)$.
\end{lemma}
\begin{proof}
    We prove the result in a series of claims.
    \begin{claim} \label{structure of w1 spectral claim 1}
        The number of components of $G[W_1]$ whose order is at least $k+1$ is at most $m$.
    \end{claim}
    \begin{proof}
        We copy the proof of \autoref{structure of w1 claim 1}.
    \end{proof}
    \begin{claim} \label{structure of w1 spectral claim 2}
        The graph $G[W_1]$ has at most $m$ disjoint cycles of order at most $k$.
    \end{claim}
    \begin{proof}
        We copy the proof of \autoref{structure of w1 claim 2}.
    \end{proof}
    \begin{claim} \label{structure of w1 spectral claim 3}
        The graph $G[W_1]$ has at least $\sqrt n$ tree components of order $k$.
    \end{claim}
    \begin{proof}
        Similarly to \autoref{structure of w1 claim 3}, we find an $\mathcal F$-free graph $G'$ obtained from $G$ by adding $b=w_1\frac{k-1}{k}+o(n)$ edges and deleting $a=w_1\frac{k-2}{k-1}+o(n)$ edges. Then
        $$\frac{a}{b}\le\frac{k(k-2)}{(k-1)^2}+o(1)<1-2\varepsilon_6$$
        since $2\varepsilon_6<1-\frac{k(k-2)}{(k-1)^2}$, (xvi),
        so \autoref{spectral general lemma} gives $\lambda(G')>\lambda(G)$, a contradiction.
    \end{proof}
    Let $T'$ be a tree occurring at least $\sqrt n$ times as guaranteed by \autoref{structure of w1 spectral claim 3}.
    \begin{claim} \label{strucure of w1 spectral claim 4}
        For any set $S\subseteq W_1$ with $|S|=k$, the graph $G'$ obtained by deleting $E(S)$ and adding a copy of $T'$ on $S$ is $\mathcal F$-free.
    \end{claim}
    \begin{proof}
        We copy the proof of \autoref{structure of w1 claim 4}.
    \end{proof}
    \begin{claim} \label{structure of w1 spectral claim 5}
        If the tree components of $G[W_1]$ of order at most $k-1$ are $T_1,\ldots,T_a$, then we have
        $$\sum_{i=1}^a|V(T_i)|\le(k-1)^2k.$$
    \end{claim}
    \begin{proof}
        As in \autoref{structure of w1 claim 5}, we obtain an $\mathcal F$-free graph $G'$ obtained from $G$ by deleting $a=k(\ell-1)$ edges and adding $b=(k-1)\ell$ edges, where $\ell\le k-1$. Then we have
        $$\frac{a}{b}\le\frac{k(\ell-1)}{(k-1)\ell}\le\frac{k(k-2)}{(k-1)^2}<1-2\varepsilon_6$$
        since $2\varepsilon_6<1-\frac{k(k-2)}{(k-1)^2}$, (xvii). Hence, \autoref{spectral general lemma} gives $\lambda(G')>\lambda(G)$, contradicting $G\in\mathrm{SPEX}(n,\mathcal F)$.
    \end{proof}
    Combining \autoref{structure of w1 spectral claim 1}, \autoref{structure of w1 spectral claim 2}, and \autoref{structure of w1 spectral claim 5}, we obtain the result (we put any trees of order $k$ which occur at most $M(m+1)$ times in $W_1^m$).
\end{proof}

\begin{lemma} \label{spectral structure of w1 2}
    For each $v\in W_1^o$, $N(v)\supseteq V-W_i$.
\end{lemma}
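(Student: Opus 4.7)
The plan is to mirror the proof of the edge-extremal analogue \autoref{structure of w1 2}. The crucial ingredient is that $G$ is connected by \autoref{connected}, so adding any edge to $G$ strictly increases $\lambda(G)$.

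Suppose for contradiction that some $v \in W_1^o$ has a non-neighbor $u \in V - W_1$. Let $G' := G + uv$, so $\lambda(G') > \lambda(G)$. Since $G \in \mathrm{SPEX}(n, \mathcal{F})$, the graph $G'$ must contain a copy of some $F \in \mathcal{F}$; because $G$ is $\mathcal{F}$-free, this copy must use the new edge $uv$, so $u, v \in V(F)$.

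Let $T$ be the tree component of $G[W_1^o]$ containing $v$. By \autoref{spectral structure of w1}, $G[W_1^o]$ has $\omega(1)$ pairwise vertex-disjoint copies of $T$. Following the template of \autoref{structure of w1 claim 4}, I would find one such copy $T''$ with isomorphism $\pi : T \to T''$ satisfying (a) $V(T'') \cap V(F) = \emptyset$, and (b) for every $x \in V(T) \cap V(F)$ and every $y \in N_F(x) \setminus V(T)$, $\pi(x) y \in E(G)$. Given such a $T''$, the map $\phi : V(F) \to V(G)$ defined by $\phi|_{V(T) \cap V(F)} = \pi$ and $\phi|_{V(F) \setminus V(T)} = \mathrm{id}$ embeds $F$ into $G$, contradicting $\mathcal{F}$-freeness.

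Counting bad copies: condition (a) rules out at most $M$ copies; and for each of the $\le M^2$ pairs $(x, y)$ in (b) with $y \in V - W_1$, \autoref{spectral improvement 2} yields that $y$ has at most $m$ non-neighbors in $W_1$, disqualifying at most $m$ copies. This leaves $\omega(1)$ candidates. The principal obstacle I anticipate is the subcase $y \in W_1 \setminus V(T)$ (necessarily $y \in W_1^m$), where \autoref{spectral improvement} gives only $d_{W_1}(y) \le m$, so very few copies contain any vertex adjacent to $y$, let alone one in the correct tree-role under $\pi$. Since $|W_1^m| \le M$ and only $O(1)$ such constraints arise, this can be handled either by a refined simultaneous-selection argument exploiting $\mathrm{Aut}(T)$ or by first proving an auxiliary claim that no vertex of $W_1^o$ has a neighbor in $W_1^m$ in $G$.
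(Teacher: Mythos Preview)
Your approach is correct and matches the paper's, which simply says to copy the proof of \autoref{structure of w1 2} using that adding an edge to a connected graph increases the spectral radius. The ``principal obstacle'' you anticipate does not arise: by the construction in the proof of \autoref{spectral structure of w1}, $W_1^o$ is a union of entire components of $G[W_1]$, so for $x\in V(T)\subseteq W_1^o$ every $G$-neighbor of $x$ inside $W_1$ already lies in $V(T)$, and since the sole added edge $uv$ has $u\in V-W_1$, every $y\in N_F(x)\setminus V(T)$ automatically lies in $V-W_1$.
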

\begin{proof}
    We copy the proof of \autoref{structure of w1 2}, using the fact that adding an edge to a connected graph increases the spectral radius.
\end{proof}

Now we know that $G$ can be obtained from $K_{W_1,\ldots,W_r}$ by embedding trees of order $k$ in $W_1$, then adding and deleting $O(1)$ edges. Now we determine the values of the $w_i$. Since all vertices in $W_i^o$ ($i\ne 1$) have the same neighborhood, they have the same eigenweight, so let $x_i$ be the common eigenweight of vertices in $W_i^o$. Let $x_1=\frac{1}{\lambda}\sum_{v\not\in W_1}x_v$. Observe that $\lambda=\Theta(n)$, a fact we will use frequently below.

\begin{lemma} \label{eigenweight estimate 3}
    For each $i\ne 1$, $v\in W_i$, we have $x_v=x_i+O(n^{-1})$.
\end{lemma}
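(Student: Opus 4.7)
The plan is to bound the difference $x_v - x_u$ where $u \in W_i^o$ is an ordinary vertex (so $x_u = x_i$ by definition) and $v \in W_i$ is arbitrary, by exploiting the fact that the neighborhoods $N(u)$ and $N(v)$ differ in only $O(1)$ vertices.

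First I would observe that for $i\ne 1$, the set $W_i^o$ is nonempty (in fact has size $w_i - O(1) = \Theta(n)$) since $|W_i^m|=O(1)$, so one can fix some $u\in W_i^o$ with $N(u)=V-W_i$ exactly by \autoref{spectral ordinary vertices}. For the arbitrary $v\in W_i$, we have by \autoref{spectral improvement} that $d_{W_i}(v)\le m$ and by \autoref{spectral improvement 2} that $d_{W_j}(v)\ge w_j-m$ for every $j\ne i$. Therefore
\[
|N(u)\triangle N(v)|\le m + (r-1)m = rm = O(1).
\]

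Next I would subtract the two eigenvalue equations. Writing
\[
\lambda(x_v - x_u) = \sum_{w\in N(v)\setminus N(u)} x_w - \sum_{w\in N(u)\setminus N(v)} x_w,
\]
and using \autoref{eigenweight estimate 2} which guarantees $1-\varepsilon_6\le x_w\le 1$ for every vertex $w$, the right-hand side is bounded in absolute value by $|N(u)\triangle N(v)| = O(1)$. Since $\lambda\ge\lambda(T_{n,r}) = \Theta(n)$, dividing through gives $|x_v - x_u| = O(n^{-1})$, which together with $x_u = x_i$ yields the claim $x_v = x_i + O(n^{-1})$.

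I do not expect any genuine obstacle here: the lemma is essentially a direct consequence of the structural description established in \autoref{spectral structure of w1} and \autoref{spectral structure of w1 2} (which says that outside $W_1$ every vertex has an almost-complete neighborhood into $V-W_i$), combined with the uniform eigenweight bound \autoref{eigenweight estimate 2}. The only minor point to verify is that $W_i^o$ is nonempty so that $x_i$ is actually well-defined, which is immediate from $|W_i^m|=O(1)$ and $w_i = n/r + o(n)$.
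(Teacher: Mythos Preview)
Your proposal is correct and follows essentially the same approach as the paper: fix an ordinary vertex $u\in W_i^o$, note that $N(u)$ and $N(v)$ differ in only $O(1)$ vertices (via \autoref{spectral improvement} and \autoref{spectral improvement 2}), subtract the eigenvalue equations, and divide by $\lambda=\Theta(n)$. The only minor inaccuracy is in your closing paragraph, where you attribute the relevant structural information to \autoref{spectral structure of w1} and \autoref{spectral structure of w1 2}; those lemmas concern $W_1$, whereas the facts you actually use (and correctly cite in the main argument) come from \autoref{spectral improvement}, \autoref{spectral improvement 2}, and \autoref{spectral ordinary vertices}.
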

\begin{proof}
    Let $u\in W_i^o$, and observe that $N(v)=N(u)\cup A'-B'$, where $a:=|A'|=O(1)$ and $b:=|B'|=O(1)$. Thus,
    $$\begin{aligned}
        \lambda x_v&=\lambda x_u+\sum_{w\in A'}x_w-\sum_{w\in B'}x_b\\
        |x_v-x_i|&=\frac{1}{\lambda}\left|\sum_{w\in A'}x_w+\sum_{w\in B}x_w\right|
        \le\frac{1}{\lambda}(a+b)=O(n^{-1}).
    \end{aligned}$$
\end{proof}

\begin{lemma} \label{eigenweight estimate 3.1}
    For any $1<i<j$, we have
    $$x_j=\frac{\lambda+w_i}{\lambda+w_j}x_i+O(n^{-2}).$$
\end{lemma}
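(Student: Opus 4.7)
The plan is to compare the eigenvalue equation at an ordinary vertex of $W_i^o$ with that of an ordinary vertex of $W_j^o$. Since $i, j \ne 1$, \autoref{spectral ordinary vertices} says that every $u \in W_i^o$ has neighborhood exactly $V \setminus W_i$, and by definition of $x_i$ the eigenvector equation at $u$ reads
$$\lambda x_i = \sum_{v \in V \setminus W_i} x_v;$$
similarly $\lambda x_j = \sum_{v \in V \setminus W_j} x_v$. Subtracting causes all contributions from $W_\ell$ with $\ell \notin \{i, j\}$ to cancel---crucially including the sum over $W_1$, whose individual eigenweights we have no clean control over because of the embedded trees---leaving
$$\lambda(x_j - x_i) = \sum_{v \in W_i} x_v - \sum_{v \in W_j} x_v.$$

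To evaluate the two remaining sums I would invoke \autoref{eigenweight estimate 3}: for every $\ell \ne 1$ and every $v \in W_\ell$, one has $x_v = x_\ell + O(n^{-1})$. Vertices of $W_\ell^o$ contribute exactly $|W_\ell^o|\, x_\ell$, while the $O(1)$ exceptional vertices in $W_\ell^m$ each contribute $x_\ell + O(n^{-1})$, for a total of $\sum_{v \in W_\ell} x_v = w_\ell x_\ell + O(n^{-1})$. Substituting for $\ell = i, j$ turns the displayed identity into
$$\lambda(x_j - x_i) = w_i x_i - w_j x_j + O(n^{-1}),$$
which rearranges to $(\lambda + w_j)\, x_j = (\lambda + w_i)\, x_i + O(n^{-1})$.

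Finally, I would divide through by $\lambda + w_j$. Since $\lambda = \Theta(n)$, we have $\lambda + w_j = \Theta(n)$, the coefficient $(\lambda + w_i)/(\lambda + w_j)$ is of order $1$, and the $O(n^{-1})$ error term becomes $O(n^{-2})$, yielding the claimed identity. I do not foresee any real obstacle: the only conceptual point is recognizing that choosing two indices $i, j > 1$ makes the contribution from $W_1$ cancel automatically, so the tricky tree-part eigenweights never appear. Everything else is a short bookkeeping computation built directly on \autoref{spectral ordinary vertices} and \autoref{eigenweight estimate 3}.
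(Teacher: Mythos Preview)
Your argument is correct and is essentially the same as the paper's: both subtract the eigenvector equations at ordinary vertices of $W_i^o$ and $W_j^o$ so that everything outside $W_i \cup W_j$ cancels, then use \autoref{eigenweight estimate 3} and $|W_\ell^m|=O(1)$ to replace $\sum_{v\in W_\ell}x_v$ by $w_\ell x_\ell+O(n^{-1})$, and finally divide by $\lambda+w_j=\Theta(n)$. Your write-up is, if anything, a bit cleaner than the paper's.
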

\begin{proof}
    We have
    $$\begin{aligned}
        \lambda x_j&=\lambda x_i+x_i(w_i-|W_i^m|)+\sum_{w\in W_i^m}x_w-x_j(w_j-|W_j^m|)-\sum_{w\in W_j^m}x_w\\
        &=\lambda x_i+w_ix_i+|W_i^m|O(n^{-1})-w_jx_j-|W_j^m|O(n^{-1})\\
        &=\lambda x_i+w_ix_i-w_jx_j+O(n^{-1}).
    \end{aligned}$$
    Solving for $x_j$ gives the result.
\end{proof}

\begin{lemma} \label{eigenweight estimate 4}
    For $v\in W_1$, we have $x_v=x_1+O(n^{-1})$.
\end{lemma}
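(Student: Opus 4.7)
The plan is to exploit the structural description of $G$ established in Lemma \ref{spectral structure of w1} and Lemma \ref{spectral structure of w1 2}, together with the definition $x_1 = \frac{1}{\lambda}\sum_{u \notin W_1} x_u$. For any $v \in W_1$, the eigenvalue equation reads $\lambda x_v = \sum_{u \in N(v)} x_u$, so to show $x_v$ is close to $x_1$ it suffices to show that $N(v)$ differs from $V - W_1$ by only $O(1)$ vertices.

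First I would handle the generic case $v \in W_1^o$. By Lemma \ref{spectral structure of w1 2}, $N(v) \supseteq V - W_1$, and by Lemma \ref{spectral structure of w1}, $v$ lies in a tree component of $G[W_1^o]$ of order $k$, so the extra neighbors $N(v) \cap W_1 = N_T(v)$ form a set of size at most $k-1 = O(1)$. Writing
\[
\lambda x_v = \sum_{u \in V - W_1} x_u + \sum_{u \in N_T(v)} x_u = \lambda x_1 + \sum_{u \in N_T(v)} x_u
\]
and using $x_u \le 1$ for all $u$ gives $|\lambda x_v - \lambda x_1| \le k-1 = O(1)$.

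Next I would handle the exceptional case $v \in W_1^m$. Since we already know $G$ is obtained from $K_{W_1,\ldots,W_r}$ by embedding trees of order $k$ in $W_1^o$ and performing $O(1)$ additional edge modifications, and since $|W_1^m| = O(1)$ (so vertices of $W_1^m$ are incident to only $O(1)$ of the tree-embedding edges as well), we can still write $N(v) = (V - W_1) \cup A_v \setminus B_v$ for sets $A_v \subseteq W_1$ and $B_v \subseteq V - W_1$ with $|A_v|+|B_v| = O(1)$. The analogous identity
\[
\lambda x_v = \lambda x_1 + \sum_{u \in A_v} x_u - \sum_{u \in B_v} x_u
\]
combined with $x_u \le 1$ yields $|\lambda(x_v - x_1)| = O(1)$.

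Finally, dividing both estimates by $\lambda = \Theta(n)$ gives $x_v = x_1 + O(n^{-1})$ for every $v \in W_1$. There is no serious obstacle here: the bulk of the work has already been done in the structural lemmas, and once the neighborhood of each $v \in W_1$ is pinned down up to $O(1)$ differences from $V - W_1$, the estimate is immediate from the trivial bound $x_u \le 1$ and the linear growth of $\lambda$. The only point to be careful about is confirming that vertices of $W_1^m$ also enjoy neighborhoods within $O(1)$ of $V - W_1$, which follows from the bounded number of edge modifications recorded after Lemma \ref{spectral structure of w1 2}.
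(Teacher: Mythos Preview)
Your proposal is correct and is essentially identical to the paper's argument: the paper's proof is simply ``We follow the proof of \autoref{eigenweight estimate 3},'' which amounts to noting that for any $v\in W_1$ the neighborhood $N(v)$ differs from $V-W_1$ by $O(1)$ vertices (this is guaranteed by the structural description established just before \autoref{eigenweight estimate 3}), so $\lambda x_v=\lambda x_1+O(1)$ and dividing by $\lambda=\Theta(n)$ finishes. Your split into the cases $v\in W_1^o$ and $v\in W_1^m$ is a harmless elaboration of the same idea.
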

\begin{proof}
    We follow the proof of \autoref{eigenweight estimate 3}.
\end{proof}

\begin{lemma} \label{eigenweight estimate 5}
    For $v\in W_1$, we have $x_v=x_1+\frac{d_{W_1}(v)}{\lambda}x_1+O(n^{-2})$.
\end{lemma}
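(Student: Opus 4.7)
The plan is to directly expand the eigenvalue equation $\lambda x_v = \sum_{u \in N(v)} x_u$ for $v \in W_1$, splitting the sum into the contribution from $N(v) \cap W_1$ and from $N(v) \setminus W_1$. The crucial point is that the outside contribution can be evaluated \emph{exactly} (not merely approximately) using the definition $\lambda x_1 = \sum_{u \notin W_1} x_u$, which is why a final error of $O(n^{-2})$ is achievable rather than just $O(n^{-1})$.

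For the outside contribution, I would invoke \autoref{spectral structure of w1 2}, which guarantees $N(v) \supseteq V - W_1$ for every $v \in W_1^o$. Hence
$$\sum_{u \in N(v), \, u \notin W_1} x_u \;=\; \sum_{u \notin W_1} x_u \;=\; \lambda x_1$$
holds \emph{exactly}, with no error term. (For the $O(1)$ many vertices $v \in W_1^m$, which may miss $O(1)$ neighbors outside $W_1$, the analogous statement is obtained by the same argument after noting that the missing eigenweights contribute $O(1)$ in total; this is still enough after dividing by $\lambda$, but the cleanest case is $v \in W_1^o$, where it should be applied.)

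For the inside contribution, I would use \autoref{spectral improvement} to note that $d_{W_1}(v) \le m = O(1)$, and \autoref{eigenweight estimate 4} to replace each $x_u$ by $x_1 + O(n^{-1})$ for $u \in W_1$. Summing over the at most $m$ neighbors yields
$$\sum_{u \in N(v) \cap W_1} x_u \;=\; d_{W_1}(v)\, x_1 + O(n^{-1}).$$
Combining the two contributions gives $\lambda x_v = \lambda x_1 + d_{W_1}(v)\, x_1 + O(n^{-1})$, and dividing through by $\lambda = \Theta(n)$ produces the claimed identity
$$x_v \;=\; x_1 + \frac{d_{W_1}(v)}{\lambda}\, x_1 + O(n^{-2}).$$

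There is no substantive obstacle here: the whole argument is a single application of the eigenvalue equation. The only subtlety is bookkeeping the error terms carefully, and in particular realizing that the weaker $O(n^{-1})$ bound from \autoref{eigenweight estimate 4} is good enough when multiplied by the bounded number $d_{W_1}(v)$ of inside-neighbors and then divided by $\lambda$, while the outside-neighbor contribution must be exact (via the definition of $x_1$ together with \autoref{spectral structure of w1 2}) rather than approximate.
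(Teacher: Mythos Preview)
Your proposal is correct and follows essentially the same route as the paper: both split the eigenvalue equation $\lambda x_v=\sum_{u\in N(v)}x_u$ into the exact outside contribution $\sum_{u\notin W_1}x_u=\lambda x_1$ (via \autoref{spectral structure of w1 2}) and the inside contribution $\sum_{u\in N_{W_1}(v)}x_u=d_{W_1}(v)(x_1+O(n^{-1}))$ (via \autoref{eigenweight estimate 4} and the bounded degree from \autoref{spectral improvement}), then divide by $\lambda=\Theta(n)$. Your remark that the clean $O(n^{-2})$ bound strictly requires $v\in W_1^o$ is well taken; the paper's proof tacitly uses this, and the weaker $O(n^{-1})$ bound for the $O(1)$ vertices in $W_1^m$ suffices for all later applications.
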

\begin{proof}
    We have
    $$\begin{aligned}
        \lambda x_v&=\lambda x_1+\sum_{u\in N_{W_1}(v)}x_u\\
        &=\lambda x_1+d_{W_1}(v)(x_1+O(n^{-1}))
    \end{aligned}$$
    and since $\Delta(G[W_1])$ is bounded, solving for $x_v$ gives the result.
\end{proof}

The spectral version of \autoref{OVXT edge} gives the following fact: for any $u\in W_i^o$, for any $j\ne i$, we have $\lambda(G_{ij})\le\lambda(G)$ and so with $u$ being the transferred vertex, we have
\begin{equation} \label{equation 3}
0\ge\frac{1}{2x_u}(x^TA(G_{ij})x-x^TA(G)x)=\sum_{v\in W_i-\{u\}-(N(u)\cap W_i)}x_v-\sum_{v\in W_j}x_v
\end{equation}

For each $i$ write $X_i=\sum_{u\in W_i}x_u$. Note that for $i\ne 1$, $X_i=w_ix_i+O(n^{-1})$ and when $i=1$ we have
$$X_1=\sum_{u\in W_1}(x_1+\frac{d_{W_1}(u)}{\lambda}x_1+O(n^{-2}))=w_1x_1+\frac{2e(W_1)}{\lambda}x_1+O(n^{-1})=w_1x_1+O(1).$$

\begin{lemma} \label{eigenweight estimate 5.1}
    For any $i>1$, we have $x_i=\frac{\lambda+w_1+2e(W_1)/\lambda}{\lambda+w_i}x_1+O(n^{-2})$.
\end{lemma}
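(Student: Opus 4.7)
The plan is to reduce the lemma to a direct computation combining the definition of $x_1$, the eigenvalue equation at ordinary vertices in $W_i^o$, and the expansions of the partial sums $X_j$ recorded in the discussion immediately preceding the statement.

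First I would apply \autoref{spectral ordinary vertices}, which guarantees that every $v \in W_i^o$ satisfies $N(v) = V - W_i$. The eigenvalue equation at such a vertex then reads
$$\lambda x_i \;=\; \sum_{u \notin W_i} x_u \;=\; X_1 \;+\; \sum_{\substack{j\ne 1 \\ j\ne i}} X_j.$$
Since $x_1$ is defined by $\lambda x_1 = \sum_{j\ne 1} X_j$, the tail sum on the right equals $\lambda x_1 - X_i$, and rearranging gives the symmetric identity $\lambda x_i + X_i = \lambda x_1 + X_1$.

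Next I would substitute the two expansions $X_i = w_i x_i + O(n^{-1})$ for $i > 1$ and $X_1 = w_1 x_1 + \tfrac{2e(W_1)}{\lambda} x_1 + O(n^{-1})$ recorded just above the lemma. Grouping the coefficients of $x_i$ and $x_1$ transforms the identity into
$$(\lambda + w_i)\, x_i \;=\; \left(\lambda + w_1 + \frac{2e(W_1)}{\lambda}\right) x_1 \;+\; O(n^{-1}).$$
Dividing both sides by $\lambda + w_i = \Theta(n)$ upgrades the error from $O(n^{-1})$ to $O(n^{-2})$, delivering the claim.

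I expect no real obstacle, since the bulk of the work is carried out by the preceding lemmas and what remains is essentially a one-line manipulation. The only minor point of care is that, although the $\tfrac{2e(W_1)}{\lambda} x_1$ term is of constant order rather than $o(1)$, it must be tracked explicitly: after division by $\lambda + w_i$ it contributes a $\Theta(n^{-1})$ correction to the ratio $x_i/x_1$, which is exactly the content of the lemma and will matter for the subsequent analysis of the $w_i$'s.
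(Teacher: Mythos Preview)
Your proposal is correct and follows essentially the same route as the paper: both start from the eigenvalue equation at an ordinary vertex of $W_i^o$, subtract the defining relation $\lambda x_1=\sum_{j\ne 1}X_j$ to get $(\lambda+w_i)x_i=(\lambda+w_1+2e(W_1)/\lambda)x_1+O(n^{-1})$, and divide by $\lambda+w_i=\Theta(n)$. Your write-up is in fact slightly more explicit than the paper's, which suppresses the $O(n^{-1})$ from $X_i=w_ix_i+O(n^{-1})$ in its first displayed line before absorbing it in the second.
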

\begin{proof}
    We have
    $$\begin{aligned}\lambda x_i&=\lambda x_1-w_ix_i+X_1\\
    &=\lambda x_1-w_ix_i+w_1x_1+\frac{2e(W_1)}{\lambda}x_1+O(n^{-1})\end{aligned}$$
    and solving for $x_i$ gives the result.
\end{proof}

\begin{lemma} \label{initial estimates}
    We have $w_i=n/r+O(1)$ for each $i$.
\end{lemma}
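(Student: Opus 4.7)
\emph{The plan} is to use the vertex-swap operation of Lemma~\ref{OVXT edge} together with equation~(\ref{equation 3}) to compare the eigenweight masses $X_i$ across parts, then translate the resulting inequality into a size bound via the eigenweight approximations already in hand. A final appeal to $\sum_i w_i = n$ yields the claim.

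\emph{Step 1 (mass comparison across parts).} For each pair $i \ne j$, choose $u \in W_i^o$ with $d_{W_i}(u) \le 1$. Such a vertex exists: for $i \ne 1$, $W_i^o$ has no internal edges and only $O(1)$ of its vertices can have any neighbor in the bounded set $W_i^m$ (indeed we may take $d_{W_i}(u) = 0$); for $i = 1$, any leaf of a tree component of $G[W_1^o]$ works (we may assume $k \ge 2$, since the $k = 1$ case of \autoref{main theorem} is implied by \autoref{non bipartite paper result}). Lemma~\ref{OVXT edge} ensures $G_{ij}$ is $\mathcal F$-free, hence $\lambda(G_{ij}) \le \lambda(G)$, so equation~(\ref{equation 3}) gives $X_i - x_u - \sum_{w \in N(u) \cap W_i} x_w \le X_j$. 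Since eigenweights are at most $1$ and $|N(u) \cap W_i| \le 1$, this simplifies to $X_i - X_j \le 2$. Swapping roles yields $|X_i - X_j| \le 2$ for all $i \ne j$.

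\emph{Step 2 (from $|X_i - X_j|$ to $|w_i - w_j|$).} Set $B := \lambda + w_1 + 2e(W_1)/\lambda$. By Lemma~\ref{eigenweight estimate 5.1}, $x_i = \frac{B}{\lambda + w_i} x_1 + O(n^{-2})$ for $i > 1$, and combined with $X_i = w_i x_i + O(n^{-1})$ a short calculation shows, for $i, j \ne 1$,
\[
X_i - X_j \;=\; \frac{B \lambda x_1 (w_i - w_j)}{(\lambda + w_i)(\lambda + w_j)} + O(n^{-1}).
\]
Since $\lambda \sim (r-1)n/r$, $w_i \sim n/r$, $B \sim n$, and $x_1 \sim 1$, the coefficient of $(w_i - w_j)$ equals $(r-1)/r + O(\sqrt{\varepsilon})$, giving $X_i - X_j = \frac{r-1}{r}(w_i - w_j)(1 + O(\sqrt{\varepsilon})) + O(1)$. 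For the case $i = 1$, $j \ne 1$, use $X_1 = w_1 x_1 + (2e(W_1)/\lambda) x_1 + O(n^{-1})$; the extra $(2e(W_1)/\lambda) x_1$ contributes only $O(e(W_1)/n) = O(1)$, since Lemma~\ref{spectral improvement} bounds $\Delta(G[W_1])$ by $m$ and hence $e(W_1) \le m w_1/2 = O(n)$. Thus in every case $|X_i - X_j| \le 2$ forces $|w_i - w_j| = O(1)$.

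\emph{Step 3 and main obstacle.} The uniform bound $|w_i - w_j| = O(1)$ together with $\sum_{i=1}^r w_i = n$ immediately yields $w_i = n/r + O(1)$. The one delicate point is in Step 2: the leading $(r-1)/r$ coefficient comes from a ratio of two $\Theta(n^2)$ quantities, so one must verify that the various $O(n^{-1})$ and $O(n^{-2})$ error terms in the eigenweight formulas do not contaminate the leading behavior. This bookkeeping is straightforward because the previously established bound $w_i = n/r + O(\sqrt{\varepsilon}n)$ keeps all the relevant quantities of the same order.
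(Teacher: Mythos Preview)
Your proof is correct and follows essentially the same strategy as the paper: apply equation~(\ref{equation 3}) from the vertex-swap operation to bound the eigenweight masses $X_i$ against one another, then convert to a bound on $|w_i-w_j|$ via the eigenweight approximations. The paper's version is slightly leaner---it only compares $w_1$ against each $w_i$ (which suffices) and uses the immediate consequence $x_i=x_1+O(n^{-1})$ of Lemma~\ref{eigenweight estimate 5.1} rather than the full formula---but the argument is the same. One small imprecision: when you pick a leaf of a tree in $W_1^o$, you should note that only $O(1)$ such leaves can have an additional neighbor in $W_1^m$, so a leaf with $d_{W_1}(u)=1$ genuinely exists.
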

\begin{proof}
    It suffices to show that for each $i>1$, we have $|
    w_1-w_i|=O(1)$. Taking \autoref{equation 3} with respect to $G_{1i}$ gives
    $$\begin{aligned}
        0&\ge w_1x_1-w_ix_i+O(1)\\
        &=w_1x_1-w_i\frac{\lambda+w_1+O(1)}{\lambda+w_i}x_1+O(1)=\left(w_1-w_i\frac{\lambda+w_i}{\lambda+w_1}\right)x_1\\
        O(n)&\ge(\lambda+w_i)w_1-(\lambda+w_1)w_i=\lambda(w_1-w_i)
    \end{aligned}$$
    from which we obtain $w_1-w_i\le O(1)$. Similarly, taking \autoref{equation 3} with respect to $G_{i1}$ gives
    $$\begin{aligned}
        0&\ge w_ix_i-w_1x_1+O(1)\\
        &=w_i\frac{\lambda+w_1+O(1)}{\lambda+w_i}x_1-w_1x_1+O(1)\\
        &=\left(w_i\frac{\lambda+w_1}{\lambda+w_i}-w_1\right)x_1+O(1)\\
        O(n)&\ge(\lambda+w_1)w_i-(\lambda+w_i)w_1=\lambda(w_i-w_1)
    \end{aligned}$$
    from which we obtain $w_i-w_1\le O(1)$.
\end{proof}

Combining \autoref{eigenweight estimate 5.1} and \autoref{initial estimates} now gives $x_i=x_1+O(n^{-1})$ for every $i$. Furthermore, combining \autoref{initial estimates} with the bound $\lambda\le\Delta(G)$ proves $\lambda\le\frac{r-1}{r}n+O(1)$, hence $\lambda=\frac{r-1}{r}n+O(1)$. These estimates will be useful in the rest of the argument.

\begin{lemma} \label{spectral balance 1} We have $|w_i-w_j|\le 1$ for each $i,j>1$.
\end{lemma}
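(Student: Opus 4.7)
The plan is to apply the spectral OVXT swap from Lemma~\ref{OVXT edge} (in its spectral form given in equation~(3)) with a suitably chosen ordinary vertex, and then convert the resulting inequality into a sharp integer bound by exploiting the eigenweight estimates already established.

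First I would fix any $u \in W_i^o$. By Lemma~\ref{spectral ordinary vertices}, $N(u) = V - W_i$, so in particular $d_{W_i}(u) = 0 \le 1$ and $G_{ij}$ is well-defined. Equation~(3), a consequence of $\lambda(G_{ij}) \le \lambda(G)$, then simplifies (since $N(u) \cap W_i = \emptyset$) to
\[
(X_i - x_u) - X_j \le 0.
\]
Because $i,j > 1$, I use $X_i = w_i x_i + O(n^{-1})$ and $X_j = w_j x_j + O(n^{-1})$, and substitute the relation $x_j = \tfrac{\lambda + w_i}{\lambda + w_j}\, x_i + O(n^{-2})$ from Lemma~\ref{eigenweight estimate 3.1}. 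After clearing denominators the $w_i w_j x_i$ cross-terms cancel exactly, leaving
\[
(w_i - w_j) \cdot \frac{\lambda}{\lambda + w_j}\, x_i \;\le\; x_u + O(n^{-1}).
\]

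Next, I plug in $\lambda = \tfrac{r-1}{r} n + O(1)$ (noted just before this lemma) and $w_j = n/r + O(1)$ (Lemma~\ref{initial estimates}), which give $\tfrac{\lambda + w_j}{\lambda} = \tfrac{r}{r-1} + O(n^{-1})$. By Lemma~\ref{eigenweight estimate 2}, both $x_u$ and $x_i$ lie in $[1 - \varepsilon_6, 1]$, so $x_u/x_i \le 1/(1 - \varepsilon_6)$. Combining these yields
\[
w_i - w_j \;\le\; \frac{r}{r-1} \cdot \frac{1}{1 - \varepsilon_6} + O(n^{-1}).
\]
For $r \ge 3$ the leading term is at most $\tfrac{3/2}{1 - \varepsilon_6}$, which is strictly less than $2$ once $\varepsilon_6$ is small (as ensured by the constant setup at the start of this subsection). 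Hence $w_i - w_j < 2$ for $n$ large, and integrality forces $w_i - w_j \le 1$. The symmetric application, with the roles of $i$ and $j$ swapped, yields $w_j - w_i \le 1$, so $|w_i - w_j| \le 1$.

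The main subtlety, and the reason a direct imitation of the edge-count argument in Lemma~\ref{balance 1} is not enough, is that $x_i$ and $x_j$ differ by $O(n^{-1})$, so a crude substitution $x_j \approx x_i$ only yields an $O(1)$ bound. What rescues the argument is that after the precise substitution from Lemma~\ref{eigenweight estimate 3.1}, the $w_i w_j$ contributions cancel and the coefficient multiplying $w_i - w_j$ becomes $\tfrac{\lambda}{\lambda + w_j} \sim \tfrac{r-1}{r}$, bounded below by a positive constant. This uniform lower bound is exactly what converts the asymptotic inequality into the sharp integer bound $\le 1$ via integrality.
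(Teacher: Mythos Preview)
Your proof is correct and follows essentially the same route as the paper: both apply equation~(\ref{equation 3}) together with the eigenweight relation of Lemma~\ref{eigenweight estimate 3.1}, simplify to an expression of the form $(w_i-w_j)\frac{\lambda}{\lambda+w_j}x_i \le x_u + O(n^{-1})$, and then use $r\ge 3$ and integrality to conclude. One minor simplification you overlooked: since $u\in W_i^o$ you have $x_u=x_i$ exactly, so the bound reads $w_i-w_j\le \frac{r}{r-1}+O(n^{-1})<2$ directly, with no need to invoke $\varepsilon_6$.
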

\begin{proof}
    If there is some $i,j>1$ such that $w_j\ge w_i+2$, then taking \autoref{equation 3} with respect to $G_{ji}$ gives
    $$\begin{aligned}
        0\ge&\sum_{v\in W_j-u}x_v-\sum_{v\in W_i}x_v\\
        =&(w_j-1-|W_j^m|)\left(\frac{\lambda+w_i}{\lambda+w_j}x_i+O(n^{-2})\right)+|W_j^m|\left(\frac{\lambda+w_i}{\lambda+w_j}x_i+O(n^{-1})\right)\\
        &-(w_i-|W_i^m|)x_i-|W_i^m|(x_i+O(n^{-1}))\\
        =&\left((w_j-1)\frac{\lambda+w_i}{\lambda+w_j}-w_i\right)x_i+O(n^{-1}).
    \end{aligned}$$
    Now since $w_i<w_j$, we have $w_i<n/2$, and since $r\ge 3$ we have $\lambda\ge\left(1-\frac{1}{r}\right)n+O(1)\ge 2n/3+O(1)$. This implies that $\lambda>w_i+n^{1/2}$, which together with $w_j\ge w_i+2$ implies $\lambda(w_j-w_i-1)>w_i+n^{1/2}$, so $(w_j-1)\frac{\lambda+w_i}{\lambda+w_j}-w_i\ge\Omega(n^{-1/2})$ and so the expression displayed above is positive, a contradiction.
\end{proof}

\begin{lemma} \label{spectral balance 2}
    For all $i>1$, we have $w_1\le w_i+2$.
\end{lemma}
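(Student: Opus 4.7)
The strategy mirrors that of \autoref{spectral balance 1} but uses the operation $G_{1i}$ in place of $G_{ji}$. Assume for contradiction that $w_1 \geq w_i + 3$ for some $i > 1$. By \autoref{spectral structure of w1}, since $k \geq 2$, the graph $G[W_1^o]$ contains $\omega(1)$ tree components of order $k$; choose $u \in W_1^o$ to be a leaf of some such component that is undisturbed by the $O(1)$ edge modifications obtained from $K_{W_1,\ldots,W_r}$ plus the embedded trees, so that $d_{W_1}(u) = 1$. Then \autoref{OVXT edge} applies and gives $\lambda(G_{1i}) \leq \lambda(G)$, whence \autoref{equation 3} reads
\[
0 \;\geq\; \sum_{v \in W_1 - \{u\} - (N(u)\cap W_1)} x_v \;-\; \sum_{v \in W_i} x_v.
\]

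The next step is to expand both sums in terms of $x_1$ using the eigenweight estimates proved earlier. Recall $X_1 = w_1 x_1 + \tfrac{2e(W_1)}{\lambda} x_1 + O(n^{-1})$; by \autoref{eigenweight estimate 4} and \autoref{eigenweight estimate 5}, the first sum equals $X_1 - x_u - x_{u'} = (w_1 - 2) x_1 + \tfrac{2e(W_1)}{\lambda} x_1 + O(n^{-1})$, where $u'$ is the unique $W_1$-neighbor of $u$. The second sum is $w_i x_i + O(n^{-1})$, and \autoref{eigenweight estimate 5.1} gives $x_i = \tfrac{\lambda + w_1 + 2e(W_1)/\lambda}{\lambda + w_i} x_1 + O(n^{-2})$. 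Substituting, dividing by $x_1 = \Theta(1)$, and multiplying by $\lambda + w_i$ collapses the inequality to
\[
0 \;\geq\; (w_1 - w_i - 2)\lambda \;-\; 2 w_i \;+\; 2 e(W_1) \;+\; O(1).
\]

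To extract a contradiction I plug in $\lambda = \tfrac{r-1}{r} n + O(1)$, $w_i = \tfrac{n}{r} + O(1)$ (from \autoref{initial estimates}), and the lower bound $e(W_1) \geq \tfrac{k-1}{k} w_1 - O(1) \geq \tfrac{(k-1) n}{k r} - O(1)$, which follows because $W_1^o$ is a disjoint union of trees of order $k$ by \autoref{spectral structure of w1}. Under the assumption $w_1 - w_i - 2 \geq 1$, the right-hand side is at least $\tfrac{n}{r}\bigl(r - 1 - \tfrac{2}{k}\bigr) - O(1)$, which is $\Omega(n) > 0$ whenever $r \geq 3$ and $k \geq 2$, contradicting the displayed inequality.

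The delicate point is the boundary case $r = 3$: the coefficient $(r-1)/r - 2/r = (r-3)/r$ from the $(w_1-w_i-2)\lambda - 2 w_i$ terms vanishes, so the argument would fail without the positive $\Theta(n)$ contribution of $2 e(W_1)$ coming from the tree edges embedded in $W_1$. This is exactly where the assumption $k \geq 2$ (equivalently, the failure of the Wang--Kang--Xue/Fang--Tait--Zhai regime) gets leveraged: the tree structure forced by \autoref{spectral structure of w1} provides precisely the $\tfrac{(k-1)}{kr} n$ of extra edges needed to push the bound past zero.
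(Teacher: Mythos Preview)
Your proof is correct and follows essentially the same approach as the paper: assume $w_1\ge w_i+3$, apply \autoref{equation 3} to $G_{1i}$ with $u$ a pendant vertex in $W_1$, substitute the eigenweight estimates for $x_i$ and $X_1$, and derive a positive lower bound contradicting $0\ge\cdots$. The only cosmetic differences are that you clear denominators (multiplying through by $\lambda+w_i$) to reach $0\ge (w_1-w_i-2)\lambda-2w_i+2e(W_1)+O(1)$ before plugging in asymptotics, whereas the paper keeps the fractional form; and you invoke the standing assumption $k\ge2$ together with the tree-edge lower bound $e(W_1)\ge\frac{k-1}{k}w_1-O(1)$, while the paper separates off the $e(W_1)=o(n)$ case by appealing to \autoref{non bipartite paper result} and then uses the cruder bound $\tfrac{2e(W_1)}{\lambda}\ge\tfrac{1}{r-1}+o(1)$.
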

\begin{proof}
    Otherwise, we have $w_1\ge w_i+3$.
    Then taking \autoref{equation 3} with respect to $G_{1i}$, where $u$ is pendant in $G[W_1]$, gives
    $$\begin{aligned}0&\ge w_1x_1+\frac{2e(W_1)}{\lambda}x_1-2x_1-w_ix_i+O(n^{-1})\\
    &=w_1x_1-2+\frac{2e(W_1)}{\lambda}x_1-w_i\frac{\lambda+w_1+2e(W_1)/\lambda}{\lambda+w_i}x_1+O(n^{-1})\\
    &=(w_1-w_i)\left(1-\frac{w_i}{\lambda+w_i}\right)x_1-2+2e(W_1)\left(\frac{1}{\lambda}-\frac{w_i}{\lambda(\lambda+w_i)}\right)x_1+O(n^{-1})\\
    &\ge 3\left(1-\frac{w_i}{\lambda+w_i}\right)-2+\frac{2e(W_1)}{\lambda}\left(1-\frac{w_i}{\lambda+w_i}\right)+O(n^{-1})\\
    &=3\left(1-\frac{1}{r}\right)-2+\frac{2e(W_1)}{\lambda}\left(1-\frac{w_i}{\lambda+w_i}\right)+O(n^{-1})>0.
    \end{aligned}$$
    Now if $e(W_1)=o(n)$ then we must have $k(\mathcal F)=1$ and \autoref{non bipartite paper result} gives $G\in\mathrm{EX}(n,\mathcal F)$. Otherwise, we have $\frac{2e(W_1)}{\lambda}\ge\frac{1}{r-1}+o(1)$ and $1-\frac{w_i}{\lambda+w_i}=1-\frac{1}{r}+o(1)$, so that
    $$0\ge 3\left(1-\frac{1}{r}\right)-2+\frac{1}{r}+o(1)\ge 1-\frac{2}{r}+o(1)>0$$
    a contradiction.
\end{proof}
    
\begin{lemma} We have $w_i\le w_1+2$ for all $i$. \label{spectral balance 2.5}
\end{lemma}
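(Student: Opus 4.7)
The plan is to mirror the proof of \autoref{spectral balance 2} but applied in the reverse direction. The case $i=1$ is vacuous, so suppose for contradiction that $i>1$ and $w_i\ge w_1+3$. Choose any $u\in W_i^o$; by \autoref{spectral ordinary vertices} and the independence of $W_i^o$, we have $d_{W_i}(u)=0$. Applying the spectral version of \autoref{OVXT edge} we get that $G_{i1}$ is $\mathcal F$-free, so $\lambda(G_{i1})\le\lambda(G)$, and \eqref{equation 3} becomes
$$0\ge \sum_{v\in W_i-\{u\}}x_v-X_1.$$

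Next I substitute the eigenweight estimates proved above. By \autoref{eigenweight estimate 3} together with $|W_i^m|=O(1)$, we have $\sum_{v\in W_i-\{u\}}x_v=(w_i-1)x_i+O(n^{-1})$, while $X_1=w_1x_1+\tfrac{2e(W_1)}{\lambda}x_1+O(n^{-1})$. Writing $E:=2e(W_1)/\lambda$ and using \autoref{eigenweight estimate 5.1} to express $x_i$ in terms of $x_1$, a short algebraic simplification rewrites the inequality as
$$0\ge\frac{\lambda(w_i-w_1-1-E)-(w_1+E)}{\lambda+w_i}\,x_1+O(n^{-1}).$$

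The final step is to derive the contradiction. Since $G[W_1]$ is a disjoint union of trees of order $k$ together with $O(1)$ extra vertices, $e(W_1)\le w_1\tfrac{k-1}{k}+O(1)\le n/r+O(1)$; combined with $\lambda=\tfrac{r-1}{r}n+O(1)$ this gives $E\le \tfrac{2}{r-1}+o(1)$. Under the assumption $w_i-w_1\ge 3$ and the standing hypothesis $r\ge 3$ we therefore have
$$w_i-w_1-1-E\ge 2-\tfrac{2}{r-1}+o(1)=\tfrac{2(r-2)}{r-1}+o(1)>0,$$
so the numerator in the displayed inequality is at least $\tfrac{2r-5}{r}n+o(n)=\Omega(n)$, while the denominator is $\Theta(n)$. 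Hence the right-hand side is bounded below by a positive constant times $x_1$, contradicting the inequality.

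The one place where care is needed is the bound on $E$: it is precisely the correction to $X_1$ coming from the embedded trees in $W_1$, and could in principle absorb the gap $w_i-w_1-1=2$, blunting the whole argument. The hypothesis $r\ge 3$ is exactly what forces $E\le\tfrac{2}{r-1}\le 1<2$, ensuring the coefficient of $\lambda$ remains a positive constant bounded away from zero. (Note the contrast with the $r=2$ case, where this slack vanishes and a counterexample can actually be constructed, consistent with \autoref{counterexample 2}.)
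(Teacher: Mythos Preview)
Your argument is correct and follows essentially the same route as the paper: apply \eqref{equation 3} to $G_{i1}$, substitute $x_i$ via \autoref{eigenweight estimate 5.1}, bound $2e(W_1)/\lambda\le\frac{2}{r-1}+o(1)$, and extract a positive constant that contradicts the inequality when $r\ge 3$. The only cosmetic difference is bookkeeping---the paper rewrites the main term as $(w_i-w_1)\bigl(1-\tfrac{w_i}{\lambda+w_i}\bigr)-1+\tfrac{2e(W_1)}{\lambda}\bigl(\tfrac{w_i}{\lambda+w_i}-1\bigr)$ and arrives at $2-\tfrac{5}{r}+O(n^{-1})>0$, whereas you keep everything over the common denominator $\lambda+w_i$---but the two computations are algebraically equivalent.
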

\begin{proof}
Otherwise we have $w_i\ge w_1+3$. Then taking \autoref{equation 3} with respect to $G_{i1}$ gives
    $$\begin{aligned}
        0&\ge (w_i-1)x_i-w_1x_1-\frac{2e(W_1)}{\lambda}x_1+O(n^{-1})\\
        &=w_i\frac{\lambda+w_1+2e(W_1)/\lambda}{\lambda+w_i}x_1-1-w_1x_1-\frac{2e(W_1)}{\lambda}+O(n^{-1})\\
        &=w_i\left(1+\frac{w_1-w_i}{\lambda+w_i}\right)x_1-w_1x_1-1+2e(W_1)\left(\frac{w_i}{\lambda(\lambda+w_i)}-\frac{1}{\lambda}\right)+O(n^{-1})\\
        &=(w_i-w_1)\left(1-\frac{w_i}{\lambda+w_i}\right)-1+\frac{2e(W_1)}{\lambda}\left(\frac{w_i}{\lambda+w_i}-1\right)+O(n^{-1}).\\
    \end{aligned}$$
    Now using $e(W_1)\le\frac{n}{r}+O(1)$ we obtain $\frac{2e(W_1)}{\lambda}\left(\frac{w_i}{\lambda+w_i}-1\right)\ge-\frac{2}{r}+o(1)$, so
    $$0\ge 3\left(1-\frac{1}{r}\right)-1-\frac{2}{r}+o(1)\ge 2-\frac{5}{3}+o(1)>0$$
    a contradiction.
\end{proof}

We now state a general lemma which will be useful for the rest of the proof.

\begin{lemma} \label{bounded modification lemma}
    Suppose $H$ is any graph obtained from $K_{W_1,\ldots,W_r}$ by embedding a graph of bounded degree in $W_1$, then deleting and adding $O(1)$ edges. Assume that $|W_i|=n/r+O(1)$ for each $i$. If $H'$ is obtained from $H$ by deleting $a$ edges and adding $b$ edges where $a,b=O(1)$, then $\lambda(H')-\lambda(H)=2(b-a)/n+O(n^{-2})$; in particular if $a<b$ then $\lambda(H')>\lambda(H)$.
\end{lemma}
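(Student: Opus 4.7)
The strategy is to compare $\lambda(H)$ and $\lambda(H')$ via the Rayleigh quotient, after first establishing a uniform estimate on the Perron eigenvector of $H$. Specifically, I would first show that $\lambda(H)=\frac{r-1}{r}n+O(1)$ and that the Perron eigenvector $x$ of $H$, normalized so that $\max_v x_v=1$, satisfies $x_v=1+O(n^{-1})$ for every $v\in V(H)$. The eigenvalue estimate follows from $H$ being a bounded-degree perturbation of $K_{W_1,\ldots,W_r}$ with $|W_i|=n/r+O(1)$, together with only $O(1)$ further edge modifications, so $\lambda(H)=\lambda(T_{n,r})+O(1)$. The uniform eigenvector estimate is proved by transcribing \autoref{eigenweight estimate 1}--\autoref{eigenweight estimate 5}: those arguments never invoked the spectral extremality of $G$, only that $G$ differs from $K_{W_1,\ldots,W_r}$ by a bounded-degree graph in one part plus $O(1)$ additional edges, which is precisely the hypothesis on $H$.

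With these estimates in hand, let $U$ and $D$ denote the sets of added and deleted edges, with $|U|=b$ and $|D|=a$, and write $A=A(H)$, $A'=A(H')$. Then
\begin{equation*}
x^TA'x-x^TAx=2\sum_{uv\in U}x_ux_v-2\sum_{uv\in D}x_ux_v=2(b-a)+O(n^{-1})
\end{equation*}
because each product $x_ux_v$ equals $1+O(n^{-1})$ and $a+b=O(1)$. Since also $x^Tx=\sum_vx_v^2=n+O(1)$, the Rayleigh principle gives
\begin{equation*}
\lambda(H')\geq\frac{x^TA'x}{x^Tx}=\lambda(H)+\frac{2(b-a)}{n}+O(n^{-2}).
\end{equation*}

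For the matching upper bound I would repeat the argument verbatim with the Perron eigenvector $y$ of $H'$ in place of $x$. The graph $H'$ is of the same form as $H$ (a bounded-degree perturbation of $K_{W_1,\ldots,W_r}$ with $O(1)$ additional modifications), so the same eigenweight estimates yield $y_v=1+O(n^{-1})$ uniformly, and hence $\lambda(H)\geq\lambda(H')-\frac{2(b-a)}{n}+O(n^{-2})$. Combining the two inequalities gives the claimed asymptotic equality. The ``in particular'' clause is immediate: if $b>a$ then $b-a\geq 1$ and the positive leading term $\frac{2(b-a)}{n}$ dominates the $O(n^{-2})$ correction for $n$ large.

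The only substantive technical point is the uniform estimate $x_v=1+O(n^{-1})$ for the generic graph $H$; everything else is a routine Rayleigh quotient computation. Fortunately, since the hypotheses on $H$ are structurally identical to the hypotheses used in deriving the corresponding estimate for $G\in\mathrm{SPEX}(n,\mathcal F)$, the earlier proofs carry over with essentially no change.
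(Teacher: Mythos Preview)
Your proof is correct and follows essentially the same approach as the paper: invoke the eigenweight estimates $x_v=1+O(n^{-1})$ (which, as you note, depend only on the structure of $H$ and not on any extremality), compute the change in $x^TAx$, and apply the Rayleigh quotient. In fact your write-up is slightly more complete than the paper's, which only spells out the lower bound $\lambda(H')-\lambda(H)\ge 2(b-a)/n+O(n^{-2})$ and leaves the matching inequality (via the Perron eigenvector of $H'$) implicit.
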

\begin{proof}
    Let $x$ be the principal eigenvector of $H$, scaled so that its largest entry is $1$. From the estimates proved above, we already have that $x_v=1+O(n^{-1})$ for every $v$. Let $A'$ be the set of deleted edges and $B'$ be the set of added edges. Then we have
    $$\begin{aligned}
        x^TA(H')x-x^TA(H)x&=2\sum_{uv\in B'}x_ux_v-2\sum_{uv\in A'}x_ux_v\\
        &=2b(1+O(n^{-1}))^2-2a(1+O(n^{-1}))^2=2(b-a)+O(n^{-1}).
    \end{aligned}$$
    Thus
    $$\begin{aligned}
        \lambda(H')-\lambda(H)&\ge\frac{x^TA(H')x}{x^Tx}-\frac{x^TA(H)x}{x^Tx}\\
        &=\frac{2(b-a)+O(n^{-1})}{n+O(1)}=\frac{2(b-a)}{n}+O(n^{-2}).
    \end{aligned}$$
    By symmetry, we also have
    $$\lambda(H)-\lambda(H')\ge\frac{2(a-b)}{n}+O(n^{-2})$$
    which proves the result.
\end{proof}

\begin{lemma} \label{spectral balance 4}
    For each $i=2,\ldots,r$, we have $w_i\le w_1+1$.
\end{lemma}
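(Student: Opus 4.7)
The plan is to argue by contradiction: assume $w_i \ge w_1 + 2$ for some $i \in \{2, \ldots, r\}$, so \autoref{spectral balance 2.5} forces $w_i = w_1 + 2$. I would then apply the spectral version of \autoref{OVXT edge} to the graph $G_{i1}$ (delete an ordinary $u \in W_i^o$, insert a new vertex $v$ with neighborhood $V - W_1 - u$); that operation produces an $\mathcal F$-free graph, so extremality of $G$ forces $\lambda(G_{i1}) \le \lambda(G)$. Expressing this inequality via Rayleigh's principle with the eigenvector of $G$ (extended by $x_v := x_u$, which leaves $x^Tx$ unchanged) yields $(X_i - x_u) - X_1 \le 0$.

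I would then plug in the estimates from \autoref{eigenweight estimate 3}, \autoref{eigenweight estimate 4}, and \autoref{eigenweight estimate 5.1}, namely $X_i - x_u = (w_i - 1)x_i + O(n^{-1})$, $X_1 = (w_1 + E) x_1 + O(n^{-1})$ with $E := 2e(W_1)/\lambda$, and $x_i = \frac{\lambda + w_1 + E}{\lambda + w_i} x_1 + O(n^{-2})$. After substituting $w_i = w_1 + 2$ and clearing the $\Theta(n)$ denominator, a short computation will collapse the inequality to
$$\lambda(1 - E) - (w_1 + E) \le O(1),$$
which, combined with the standard asymptotics $\lambda = \tfrac{r-1}{r}n + O(1)$ and $w_1 = \tfrac{n}{r} + O(1)$, forces
$$E \ge \frac{r-2}{r-1} - o(1).$$

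The contradiction will come from upper-bounding $E$. By \autoref{spectral structure of w1} we have $e(W_1) \le w_1(k-1)/k + O(1)$, so $E \le \tfrac{2(k-1)}{k(r-1)} + o(1) < \tfrac{2}{r-1}$. For $r \ge 4$ this upper bound satisfies $\tfrac{2}{r-1} \le \tfrac{r-2}{r-1}$, which is incompatible with the lower bound on $E$, and the contradiction follows immediately. The hard part is $r = 3$: here $\tfrac{2}{r-1} = 1$ and the crude bound only gives $E < 1$, while we would need $E < \tfrac12$. In that regime I would handle the case by supplementing $G_{i1}$ with a bounded edge modification that repacks $W_1 \cup \{v\}$ into an additional tree of order $k$ whenever the residue of $w_1 \pmod k$ permits this, then applying \autoref{bounded modification lemma} to conclude that $\lambda$ still strictly increases under the combined operation, again contradicting extremality of $G$.
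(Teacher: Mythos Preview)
Your argument for $r\ge 4$ is correct and essentially matches the paper's: both derive, from the Rayleigh inequality applied to $G_{i1}$, the bound $E:=2e(W_1)/\lambda\ge\frac{r-2}{r-1}-o(1)$, and both contradict this via $E\le\frac{2(k-1)}{k(r-1)}<\frac{2}{r-1}\le\frac{r-2}{r-1}$.

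The gap is in your treatment of $r=3$. Your proposed fix is to ``repack $W_1\cup\{v\}$ into an additional tree of order $k$ whenever the residue of $w_1\pmod k$ permits this,'' but this essentially never happens: by \autoref{spectral structure of w1} the set $W_1^o$ is already completely tiled by trees of order $k$, so $|W_1^o|\equiv 0\pmod k$, and after inserting $v$ you have exactly one leftover vertex---never enough to form a new order-$k$ tree using only ordinary material. You could try to scavenge vertices from $W_1^m$, but $|W_1^m|=O(1)$ with no control on its size, so this fails in general. You also cannot simply attach $v$ to an existing tree, since that creates a component of order $k+1$ and there is no guarantee such a graph stays $\mathcal F$-free. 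Finally, even if some bounded edge addition were available, applying \autoref{bounded modification lemma} to the \emph{combined} operation is not legitimate as stated: the transfer step alters $\Theta(n)$ edges, so you would need a first-order Rayleigh estimate on the transfer separately, and the computation $(X_i-x_u)-X_1=\frac{1-2E}{3}+O(n^{-1})$ shows this term can be a negative constant of order $\Theta(1)$ when $E>\tfrac12$, swamping any $O(1)$ gain from added edges unless you quantify it precisely.

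The paper closes this case by an entirely different route: it passes to a cleaned graph $G'$ (all $W^m$ vertices made ordinary), then computes the eigenvalues of $G'$ and of $G'_{21}$ to second order via explicit eigenvector equations, treating the two subcases $w_3=w_2-1$ and $w_3=w_2$ separately, and obtains $\lambda(G'_{21})-\lambda(G')=\frac{4}{3}\bigl(1-\frac{k-1}{k}\bigr)\frac{1}{n}+O(n^{-2})>0$. This second-order computation is the genuine content of the $r=3$ case, and your proposal does not supply a substitute for it.
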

\begin{proof}
    If $w_i=w_1+2$, then as in \autoref{spectral balance 2.5}, taking \autoref{equation 3} with respect to $G_{i1}$ gives
    $$0\ge 2\left(1-\frac{1}{r}\right)-1+\frac{2e(W_1)}{\lambda}\left(\frac{w_i}{\lambda+w_i}-1\right)+O(n^{-1}).$$
    We have $2e(W_1)\le\frac{2(k-1)}{k}\frac{n}{r}+O(1)$, so we obtain
    $$0\ge 1-\frac{2}{r}-\frac{k-1}{k}\frac{2}{r}+o(1)$$
    which gives a contradiction if $r\ge 4$. If $r=3$, we require a more detailed argument. Without loss of generality, $i=2$. Let $G'$ be obtained from $G$ by converting every vertex in $W^m$ to an $W$-ordinary vertex in the same part, let $a=e(G')-e(G)=O(1)$, and let $G_{21}'=(G')_{21}$. Observe by \autoref{bounded modification lemma} that $\lambda(G')=\lambda(G)+2a/n+O(n^{-2})$ and $\lambda(G_{21})=\lambda(G_{21}')-2a/n+O(n^{-2})$, and so it suffices to show that $\lambda(G_{21}')-\lambda(G')\ge\delta/n$ for some constant $\delta>0$. Let $V(G_{21}')$ have partition $U_1\sqcup U_2\sqcup U_3$ so that $|U_1|=w_1+1$, $|U_2|=w_2-1$, and $|U_3|=w_3$. Note that according to the previous lemmas, we either have $w_1=w_2-2=w_3-1$ or $w_1=w_2-2=w_3-2$. Let $\mu=\lambda(G')$, $\rho=\lambda(G_{21}')$, with corresponding eigenvectors $y,z$, respectively, scaled so that $y_1:=\frac{1}{\mu}\sum_{v\not\in W_1}x_v=1$ and $z_1:=\frac{1}{\rho}\sum_{v\not\in U_1}z_v=1$. Let $y_2,y_3$ be the common eigenweights of vertices in $W_2,W_3$, and let $z_2,z_3$ be the common eigenweights of vertices in $U_2,U_3$.
    \begin{claim} \label{spectral balance 4 claim 1}
        We have $\rho-\mu=O(n^{-1})$,
    \end{claim}
    \begin{proof}
        Observe that $G_{21}'$ can be obtained from $G'$ by deleting $w_1$ edges and adding $w_2$ edges, so
        $$\frac{1}{2}(\lambda(G_{21}')-\lambda(G'))\ge\frac{-w_1(1+O(n^{-1}))^2+w_2(1+O(n^{-1}))^2}{y^Ty}=\frac{O(1)}{y^Ty}=O(n^{-1}).$$
        The reverse direction is proved similarly.
    \end{proof}
    Now for $v\in W_1$ we obtain
    $$\begin{aligned}
        \mu^2y_v&=\mu\left(\mu y_1+\sum_{u\in N_{W_1}(v)}x_u\right)\\
        &=\mu^2+\mu d_{W_1}(v)+d_{W_1}'(v)+O(n^{-1})\\
        y_v&=1+\frac{d_{W_1}(v)}{\mu}+\frac{d_{W_1}'(v)}{\mu^2}+O(n^{-3}).
    \end{aligned}$$
    Similarly for $v\in W_1$,
    $$\begin{aligned}
        z_v&=1+\frac{d_{W_1}(v)}{\rho}+\frac{d_{W_1}'(v)}{\rho^2}+O(n^{-3})\\
        &=1+\frac{d_{W_1}(v)}{\mu}+\frac{d_{W_1}'(v)}{\mu^2}+O(n^{-3}).
    \end{aligned}$$
    and for the vertex $v\in U_1-W_1$ we have $z_v=1$.
    Let $Y_1=\sum_{v\in W_1}y_v$, $Z_1=\sum_{v\in U_1}z_v$. Then using the eigenweight estimates above we obtain
    $$\begin{aligned}
        Y_1&=w_1+\frac{2e(W_1)}{\mu}+\frac{\sum_{v\in W_1}d_{W_1}'(v)}{\mu^2}+O(n^{-2})\\
        Z_1&=w_1+1+\frac{2e(W_1)}{\mu}+\frac{\sum_{v\in W_1}d_{W_1}'(v)}{\mu^2}+O(n^{-2})=Y_1+1+O(n^{-2}).
    \end{aligned}$$
    In particular, $Y_1,Z_1=w_1+O(1)$. From the first-degree eigenvector equations we obtain
    $$\begin{aligned}
        y_2&=1+\frac{Y_1-w_2}{\mu+w_2}\\
        y_3&=1+\frac{Y_1-w_3}{\mu+w_3}\\
        z_2&=1+\frac{Z_1-w_2+1}{\rho+w_2-1}=1+\frac{Y_2-w_2+2}{\mu+w_2-1}+O(n^{-3})\\
        &=1+\frac{Y_2-w_2+2}{\mu+w_2}+\frac{Y_2-w_2+2}{(\mu+w_2)(\mu+w_2-1)}+O(n^{-3})\\
        z_3&=1+\frac{Z_1-w_3}{\rho+w_3}=1+\frac{Y_1-w_3+1}{\mu+w_3}+O(n^{-3}).
    \end{aligned}$$
    Then
    \begin{equation} \label{spectral balance 4 equation 1}
    \begin{aligned}
        \mu=&\,\mu y_1=w_2\left(1+\frac{Y_1-w_2}{\mu+w_2}\right)+w_3\left(1+\frac{Y_1-w_3}{\mu+w_3}\right)\\
        \rho=&\,\rho z_1=(w_2-1)\left(1+\frac{Y_1-w_2+2}{\mu+w_2}+\frac{Y_1-w_2+2}{(\mu+w_2)(\mu+w_2-1)}\right)\\
        &+w_3\left(1+\frac{Y_1-w_3+1}{\mu+w_3}\right)+O(n^{-2})
    \end{aligned}
    \end{equation}
    hence
    $$\begin{aligned}
        \rho-\mu=&-1+\frac{3w_2-Y_1-2}{\mu+w_2}+\frac{(w_2-1)(Y_1-w_2+2)}{(\mu+w_2)(\mu+w_2-1)}+\frac{w_3}{\mu+w_3}+O(n^{-2})\\
    \end{aligned}$$
    Now $Y_1-w_2=\frac{2e(W_1)}{\mu}-2+O(n^{-1})=\frac{k-1}{k}-2+O(n^{-1})$, so
    $$\begin{aligned}\rho-\mu&=-1+\frac{2w_2-\frac{k-1}{k}}{\mu+w_2}+\frac{w_2\frac{k-1}{k}}{(\mu+w_2)(\mu+w_2-1)}+\frac{w_3}{\mu+w_3}+O(n^{-2})\\
    &=-1+\frac{2w_2}{\mu+w_2}-\frac{2(k-1)}{3k}\frac{1}{n}+\frac{w_3}{\mu+w_3}+O(n^{-2}).\end{aligned}$$
    \uline{Case 1:} $w_3=w_2-1$. Using \autoref{spectral balance 4 equation 1} we obtain an initial estimate of $\mu$:
    $$\begin{aligned}\mu&=w_2+w_3+\frac{w_2(Y_1-w_2)}{\mu+w_2}+\frac{(w_2-1)(Y_1-w_2+1)}{\mu+w_3}\\
    &=2w_2-1+\frac{w_2(2e(W_1)/\mu-2)}{\mu+w_2}+\frac{w_2(2e(W_1)/\mu-1)}{\mu+w_2-1}+O(n^{-1})\\
    &=2w_2-1+\frac{w_2(4e(W_1)/\mu-3)}{\mu+w_2}+O(n^{-1})\\
    &=2w_2-2+\frac{2}{3}\frac{k-1}{k}+O(n^{-1}).\end{aligned}$$
    Also, note that
    $$\begin{aligned}
        \frac{w_3}{\mu+w_3}&=\frac{w_2-1}{\mu+w_2-1}=\frac{w_2}{\mu+w_2}+\frac{w_2}{(\mu+w_2)(\mu+w_2-1)}-\frac{1}{n}+O(n^{-2})\\
        &=\frac{w_2}{\mu+w_2}-\frac{2}{3n}+O(n^{-2}).
    \end{aligned}$$
Thus
$$\begin{aligned}
    \rho-\mu&=-1+\frac{\mu}{\mu+w_2}+\frac{w_2}{\mu+w_2}+\left(2-\frac{2}{3}\frac{k-1}{k}-\frac{2}{3}\frac{k-1}{k}-\frac{2}{3}\right)\frac{1}{n}+O(n^{-2})\\
    &=\frac{4}{3}\left(1-\frac{k-1}{k}\right)\frac{1}{n}+O(n^{-2})
\end{aligned}$$
and we are done.\\
\uline{Case 2:} $w_3=w_2$. Similarly to case 1 we obtain an initial estimate of $\mu$:
$$\begin{aligned}
    \mu&=w_2+w_3+\frac{w_2(Y_1-w_2)}{\mu+w_2}+\frac{w_2(Y_1-w_2)}{\mu+w_2}\\
    &=2w_2+\frac{2w_2(Y_1-w_2)}{\mu+w_2}\\
    &=2w_2-\frac{4}{3}+\frac{2}{3}\frac{k-1}{k}+O(n^{-1}).
\end{aligned}$$
Thus,
$$\begin{aligned}
\rho-\mu&=-1+\frac{\mu}{\mu+w_2}+\frac{w_2}{\mu+w_2}+\left(\frac{4}{3}-\frac{2}{3}\frac{k-1}{k}-\frac{2}{3}\frac{k-1}{k}\right)\frac{1}{n}+O(n^{-2})\\
&=\frac{4}{3}\left(1-\frac{k-1}{k}\right)\frac{1}{n}+O(n^{-2})
\end{aligned}$$
and we are done.
\end{proof}

Henceforth, whenever we need precise eigenvalue estimates we will follow the argument of \autoref{spectral balance 4}, so we will not repeat every detail of such arguments. Before completing our analysis of the part sizes of $G$, we need the following lemma.

\begin{lemma} \label{tree comparison lemma}
    Suppose $G_1$, $G_2$ are graphs obtained from $T_{n,r}$ by embedding the same number of trees of order $k\ge 4$ in a large part so as to cover all but $O(1)$ vertices in the part. Then
    $$\lambda(G_1)-\lambda(G_2)\le\left[\frac{k-5+6/k}{r-1}\right]\frac{1}{n}+O(n^{-2}).$$
    Moreover, if all the trees embedded in $G_1$ are stars and all the trees embedded in $G_2$ are paths, then equality holds.
\end{lemma}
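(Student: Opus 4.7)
The plan is to isolate the dependence of $\lambda$ on the chosen tree structures at the second-order level, building on the eigenvector expansions developed in the proof of \autoref{spectral balance 4}. The key observation is that both $G_1$ and $G_2$ share the same partition sizes $w_1,\ldots,w_r$ and edge count $e(W_1) = t(k-1)$, where $t = n/(rk)+O(1)$ is the common number of trees; hence all contributions to $\lambda$ except the second-order correction match between them, and the difference is driven entirely by
$$S(G) := \sum_{v \in W_1} d_{W_1}(v)^2 = \sum_{T}\sigma(T), \qquad \sigma(T):=\sum_{v \in V(T)} d_T(v)^2,$$
summed over tree components $T$ of $G[W_1]$.

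Concretely, for $v \in W_1$, the expansion from \autoref{spectral balance 4} gives $y_v = 1 + d_{W_1}(v)/\mu + d_{W_1}'(v)/\mu^2 + O(n^{-3})$, so summing yields
$$Y_1 = w_1 + \frac{2e(W_1)}{\mu} + \frac{S(G)}{\mu^2} + O(n^{-2}).$$
Since every vertex of $W_i$ ($i \ne 1$) has neighborhood $V \setminus W_i$, the common eigenweight in $W_i$ is $y_i = (Y_1+\mu)/(\mu+w_i)$; substituting into $\mu = \sum_{i\ne 1} w_i y_i$ produces the master equation
$$s(\mu)\,Y_1 = \mu\bigl(1 - s(\mu)\bigr), \qquad s(\mu) := \sum_{i\ne 1} \frac{w_i}{\mu+w_i}.$$
Viewing $\mu$ as an implicit function of $S(G)$ with $w_1,\ldots,w_r,e(W_1)$ held fixed (and common to both graphs), implicit differentiation at $\mu \approx (r-1)n/r$ gives $\partial\mu/\partial S = \frac{r}{(r-1)n^2} + O(n^{-3})$. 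Since $|S(G_1) - S(G_2)| = O(n)$, integrating gives
$$\lambda(G_1) - \lambda(G_2) = \frac{r\bigl(S(G_1) - S(G_2)\bigr)}{(r-1)n^2} + O(n^{-2}).$$

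It remains to bound $S(G_1) - S(G_2)$ by extremizing $\sigma(T)$ over trees of order $k$. Any tree's degree sequence sums to $2(k-1)$ with max entry at most $k-1$ and at least two leaves, so it is majorized by the star's sequence $(k-1,1,\ldots,1)$ and majorizes the path's sequence $(2,\ldots,2,1,1)$; by Karamata's inequality, $\sigma$ is therefore maximized by $K_{1,k-1}$ with $\sigma(K_{1,k-1}) = (k-1)^2 + (k-1) = k(k-1)$, and minimized by $P_k$ with $\sigma(P_k) = 2\cdot 1 + (k-2)\cdot 4 = 4k-6$ (using $k \ge 4$). Therefore
$$S(G_1) - S(G_2) \le t\bigl(k(k-1) - (4k-6)\bigr) = t(k-2)(k-3) = \frac{n(k-2)(k-3)}{rk} + O(1),$$
which substituted into the preceding display gives $\lambda(G_1) - \lambda(G_2) \le \frac{(k-2)(k-3)}{k(r-1)n} + O(n^{-2}) = \frac{k-5+6/k}{(r-1)n} + O(n^{-2})$. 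The moreover claim is immediate since both extremizations become equalities exactly when $G_1$ consists of stars and $G_2$ of paths.

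The main technical obstacle is that each of $\lambda(G_1)$, $\lambda(G_2)$ is only individually known to precision $O(n^{-1})$ via the expansions of \autoref{spectral balance 4}, yet the claimed bound on their difference is itself of order $n^{-1}$. The resolution is that the common $O(n^{-1})$ errors depend only on shared data $(w_1,\ldots,w_r,e(W_1))$ and cancel in the difference; the implicit-differentiation step above is what makes this cancellation rigorous, and in carrying it out one must confirm that the sub-leading terms of the master equation truly are independent of $S(G)$ up to $O(n^{-3})$ corrections.
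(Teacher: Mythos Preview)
Your argument is correct and arrives at the same formula, but the route differs from the paper's in two respects worth noting.

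First, to reduce to the extremal tree shapes, the paper works graph-theoretically: it shows $\lambda(G_1)\le\lambda(G_{\mathrm{star}})$ by iterated Kelmans transformations (each step increases the number of leaves and cannot decrease $\lambda$), and $\lambda(G_2)\ge\lambda(G_{\mathrm{path}})$ by a direct Rayleigh-quotient edge-switch that increases the diameter. You instead identify $S(G)=\sum_v d_{W_1}(v)^2$ as the sole tree-dependent input to $\lambda$ up to $O(n^{-2})$, and then extremize $S$ over tree degree sequences via majorization and Karamata. Your reduction is cleaner and more uniform (one argument handles both extremes), while the paper's has the advantage of yielding genuine inequalities $\lambda(G_1)\le\lambda(G_{\mathrm{star}})$ and $\lambda(G_2)\ge\lambda(G_{\mathrm{path}})$ without any $O(n^{-2})$ slack, which it then only needs to compute for a single explicit pair.

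Second, for the spectral computation itself, the paper subtracts the two explicit expansions of $\mu$ and $\rho$ term by term. You instead package the common structure into the master equation $s(\mu)Y_1=\mu(1-s(\mu))$ and differentiate implicitly in $S$. This is slicker, but your ``integrating'' sentence is where the rigor needs attention: $S$ is discrete, so what you really want is to subtract the two instances $\Phi(\mu_j,S_j)=O(n^{-2})$ of the master equation and Taylor-expand, checking that the second-order remainder (involving $\partial^2\Phi/\partial\mu^2=O(n^{-1})$, $\partial^2\Phi/\partial\mu\partial S=O(n^{-3})$, $\partial^2\Phi/\partial S^2=0$, together with $\mu_1-\mu_2=O(n^{-1})$ and $S_1-S_2=O(n)$) is $O(n^{-2})$. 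With that said explicitly, your argument is complete.
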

\begin{proof}
    Let $G_{\mathrm{star}}$ and $G_{\mathrm{path}}$ be the graphs obtained from $G_1$ or $G_2$ by replacing each embedded tree with a star or path, respectively.
    \begin{claim} \label{tree comparison lemma claim 1}
    Let $K$ be any graph obtained by embedding a disjoint union of trees of order $k$ in one part of $T_{n,r}$, where one of the trees $T$ is not a star. Then there is another tree $T'$ with more pendant vertices than $T$ such that, if $K'$ is obtained from $K$ by replacing $T$ with $T'$, we have $\lambda(K')\ge\lambda(K)$.
    \end{claim}
    \begin{proof}
        Choose a non-pendant vertex $u$ as the root of $T$ and choose $v\in V(T)$ at maximum distance from $u$. Since $T$ is not a star, $d(v,u)\ge 2$ and also $v$ is a pendant vertex. Let $v\sim_T w$. Since $v$ is at maximum distance from $u$ and $w\ne u$, $w$ has a unique neighbor $w'$ such that $w'$ is not a pendant vertex. Let $T'$ be obtained by deleting the edge $wv'$ and adding the edge $w'v'$ for every pendant $v'\sim_Tw'$, and note that $T'$ has more pendant vertices than $T$. Now $K'$ is obtained from $K$ by performing a Kelmans transformation and so $\lambda(K')\ge\lambda(K)$ (see e.g. \cite{csikvari2009conjecture}).
    \end{proof}
    By repeatedly applying \autoref{tree comparison lemma claim 1} to $G_1$, it follows that $\lambda(G_1)\le\lambda(G_{\mathrm{star}})$.
    \begin{claim} \label{tree comparison lemma claim 2}
        Let $K$ be any graph obtained by embedding a disjoint union of trees of order $k$ in one part of $T_{n,r}$, where one of the trees $T$ is not a path. Then there is another tree $T'$ of greater diameter than $T$ such that, if $K'$ is obtained from $K$ by replacing $T$ with $T'$, we have $\lambda(K')\le\lambda(K)$.
    \end{claim}
    \begin{proof}
        Let a longest path $P$ in $T$ have endpoints $u$ and $u'$ (so that $u,u'$ are pendant in $T$), and note that if $T$ is not a path then there is a vertex $v$ of degree at least $3$ which is on $P$. Let $v\sim w$ with $w\not\in\{u,u'\}$. Let $T'$ be obtained from $T$ by deleting $vw$ and adding $uw$ and note that $T'$ is a tree of greater diameter than $T$. Let $\mu=\lambda(K)$, with eigenvector $y$, scaled so that the sum of $y_v$ over $v$ not in the large part equals $\mu$. For any $x\in V(T)$ one can show $y_x=1+\frac{d_T(x)}{\mu}+O(n^{-2})$. Therefore
        $$\begin{aligned}
            \frac{1}{2y_w}(y^TA(K')y-y^TA(K)y)&\le 1+\frac{1}{\mu}-1-\frac{3}{\mu}+O(n^{-2})<0.
        \end{aligned}$$
    \end{proof}
    By repeatedly applying \autoref{tree comparison lemma claim 2} to $G_2$, it follows that $\lambda(G_2)\ge\lambda(G_{\mathrm{path}})$.
    Let $V_1,\ldots,V_r$ be the common partition of $G_{\mathrm{star}}$ and $G_{\mathrm{path}}$, with $v_i:=|V_i|$. Let $v_1,\ldots,v_h=v_1$ and $v_{h+1},\ldots,v_r=v_1-1$. Let $\mu=\lambda(G_{\mathrm{star}})$, $\rho=\lambda(G_{\mathrm{path}})$, and let $y,z$ be the corresponding eigenvectors, scaled so that $y_1:=\frac{1}{\mu}\sum_{v\not\in V_1}y_v=1$ and $z_1:=\frac{1}{\rho}\sum_{v\not\in V_1}z_v=1$. Similarly to \autoref{spectral balance 4 claim 1}, we find $\mu-\rho=O(n^{-1})$.
    For $v\in V_1$, let $d_s(v):=d_{G_{\mathrm{star}}[V_1]}(v)$ and $d_p(v):=d_{G_{\mathrm{path}[V_1]}}(v)$, and let $d_s'(v)$ ($d_p'(v)$) be the number of walks of length 2 starting from $v$ in $G_{\mathrm{star}}[V_1]$ ($G_{\mathrm{path}}[V_1]$). The second-degree eigenvector equations give that for $v\in V_1$,
    $$\begin{aligned}
        y_v&=1+\frac{d_s(v)}{\mu}+\frac{d_s'(v)}{\mu^2}+O(n^{-3})\\
        z_v&=1+\frac{d_p(v)}{\mu}+\frac{d_p'(v)}{\mu^2}+O(n^{-3}).
    \end{aligned}$$
    Let $Y_1=\sum_{v\in V_1}y_v$, $Z_1=\sum_{v\in V_1}z_v$. We obtain
    $$\begin{aligned}
        Y_1&=v_1+\frac{2e(V_1)}{\mu}+\frac{1}{\mu^2}\sum_{v\in V_1}d_s'(v)+O(n^{-2})\\
        Z_1&=v_1+\frac{2e(V_1)}{\mu}+\frac{1}{\mu^2}\sum_{v\in V_1}d_p'(v)+O(n^{-2}).
    \end{aligned}$$
    Observe that the number of walks of length two in a path of order $k\ge 4$ is $4k-6$ and the number of walks of length two in $K_{1,k-1}$ is $k(k-1)$. Therefore,
    $$\begin{aligned}
        Y_1-Z_1&=\frac{1}{\mu^2}\sum_{v\in V_1}(d_s'(v)-d_p'(v))+O(n^{-2})\\
        &=\frac{1}{\mu^2}\left(\frac{v_1-O(1)}{k}k(k-1)-\frac{v_1-O(1)}{k}(4k-6)\right)+O(n^{-2})\\
        &=\frac{v_1(k-5+6/k)}{\mu^2}+O(n^{-2}).
    \end{aligned}$$
    Now for $v\in V_i$ $(i\ge 2)$, the eigenvector equations give
    $$y_v=\begin{cases}
        1+\frac{Y_1-v_1}{\mu+v_1},&2\le i\le h\\
        1+\frac{Y_1-v_1+1}{\mu+v_1}+\frac{Y_1-v_1+1}{(\mu+v_1)^2}+O(n^{-3}),&h+1\le i\le r
    \end{cases}$$
    and
    $$z_v=\begin{cases}
        1+\frac{Z_1-v_1}{\mu+v_1}+O(n^{-3}),&2\le i\le h\\
        1+\frac{Z_1-v_1+1}{\mu+v_1}+\frac{Z_1-v_1+1}{(\mu+v_1)^2}+O(n^{-3}),&h+1\le i\le r.
    \end{cases}$$
    We obtain
    $$\begin{aligned}
        \mu=&\,\mu y_1=(h-1)v_1\left(1+\frac{Y_1-v_1}{\mu+v_1}\right)\\
        &+(r-h)(v_1-1)\left(1+\frac{Y_1-v_1+1}{\mu+v_1}+\frac{Y_1-v_1+1}{(\mu+v_1)^2}\right)+O(n^{-2})\\
        \rho=&\,\rho z_1=(h-1)v_1\left(1+\frac{Z_1-v_1}{\mu+v_1}\right)\\
        &+(r-h)(v_1-1)\left(1+\frac{Z_1-v_1+1}{\mu+v_1}+\frac{Z_1-v_1+1}{(\mu+v_1)^2}\right)+O(n^{-2}).
    \end{aligned}$$
    Thus
    $$\begin{aligned}
        \mu-\rho&=(h-1)v_1\frac{Y_1-Z_1}{\mu+v_1}+(r-h)(v_1-1)\frac{Y_1-Z_1}{\mu+v_1}+(r-h)(v_1-1)\frac{Y_1-Z_1}{(\mu+v_1)^2}+O(n^{-2})\\
        &=\frac{(r-1)v_1(Y_1-Z_1)}{\mu+v_1}+O(n^{-2})\\
        &=\left[\frac{k-5+6/k}{r-1}\right]\frac{1}{n}+O(n^{-2}).
    \end{aligned}$$
    This proves the equality part of the result, and the inequality part follows from $\lambda(G_{\mathrm{path}})\le\lambda(G_1),\lambda(G_2)\le\lambda(G_{\mathrm{star}}).$
\end{proof}

\begin{lemma} \label{spectral balance 4.5}
    For each $i=2,\ldots,r$, we have $w_i\ge w_1-1$.
\end{lemma}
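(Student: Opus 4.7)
The plan is to argue by contradiction, in the spirit of the proof of \autoref{spectral balance 4}. Assume that some $i \in \{2,\ldots,r\}$ satisfies $w_i \le w_1 - 2$; combined with \autoref{spectral balance 2} this forces $w_i = w_1 - 2$, and without loss of generality take $i = 2$. The modification to consider is $G_{12}$ from \autoref{OVXT edge}: delete a pendant vertex $u$ of one of the order-$k$ trees of $G[W_1^o]$ (such a vertex exists for $k \ge 2$, while the case $k = 1$ is already covered by \autoref{non bipartite paper result}) and add an ordinary vertex to $W_2$. By \autoref{OVXT edge}, $G_{12}$ is $\mathcal F$-free, and the two previously unbalanced parts become balanced: $|W_1| = w_1 - 1 = |W_2|$. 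The goal is to show $\lambda(G_{12}) > \lambda(G)$, contradicting $G \in \mathrm{SPEX}(n,\mathcal F)$. Note that the naive Equation~(3) estimate fails here: it only yields a non-positive quantity on the right, consistent with $0 \ge \cdot$ and hence no contradiction.

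Following the strategy of \autoref{spectral balance 4}, I would first convert every vertex in $W^m$ to an ordinary vertex, obtaining $G'$ and $G_{12}'$; by \autoref{bounded modification lemma}, the differences $\lambda(G) - \lambda(G')$ and $\lambda(G_{12}) - \lambda(G_{12}')$ agree up to $O(n^{-2})$, so it suffices to prove that $\lambda(G_{12}') - \lambda(G') \ge \delta/n$ for some positive constant $\delta$. Let $\mu = \lambda(G')$ and $\rho = \lambda(G_{12}')$ with principal eigenvectors $y, z$ scaled analogously to \autoref{spectral balance 4}.

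The heart of the argument is an explicit second-order expansion of $\rho - \mu$. The second-order eigenvector equations give $y_v = 1 + d_{W_1}(v)/\mu + d'_{W_1}(v)/\mu^2 + O(n^{-3})$ for $v \in W_1$, and analogously for $z_v$ on the modified $W_1$, yielding $Y_1 = w_1 + 2e(W_1)/\mu + \mu^{-2}\sum_{v \in W_1} d'_{W_1}(v) + O(n^{-2})$ and $Z_1 = Y_1 - 1 + O(n^{-2})$ (one fewer vertex and one fewer edge in the modified $W_1$, together with a bounded walks-of-length-2 adjustment). Expanding the first-order eigenvector equations for vertices outside $W_1$ to order $1/(\mu + w_j)^2$ on the parts whose size changed, and substituting into $\mu = \mu y_1$ and $\rho = \rho z_1$, I expect to obtain
\[
\rho - \mu = \frac{\Lambda}{n} + O(n^{-2})
\]
for an explicit constant $\Lambda = \Lambda(r,k)$ whose positivity must be verified.

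The main obstacle is the casework for small $r$. For $r \ge 4$, the rebalancing gain in the multipartite Tur\'an backbone (equalising $|W_1|$ and $|W_2|$) dominates the $\frac{k-1}{k} \cdot \frac{2}{r}$-scale loss from destroying one pendant edge in $W_1$, and $\Lambda > 0$ should follow from a direct algebraic verification. For $r = 3$ the calculation is more delicate and splits, as in Cases 1 and 2 of \autoref{spectral balance 4}, according to whether $w_3 = w_2$ or $w_3 = w_2 + 1 = w_1 - 1$. In each sub-case, after carefully tracking every second-order correction — in particular the walks-of-length-2 contribution coming from the tree whose pendant has been removed — the analysis should mirror that of \autoref{spectral balance 4} and yield $\Lambda = \frac{4}{3k} > 0$, closing the contradiction.
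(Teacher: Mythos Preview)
Your direct local comparison $\lambda(G_{12}') > \lambda(G')$ does not go through for $k=2$, and the claimed constant $\Lambda = 4/(3k)$ is wrong. The situation here is \emph{not} symmetric to \autoref{spectral balance 4}: there the larger part $W_i$ carried no embedded trees and a vertex was moved \emph{into} $W_1$; here the larger part $W_1$ carries the trees and a pendant vertex is moved \emph{out}, which costs an edge inside $W_1$. Indeed, your $G_{12}'$ coincides with the paper's $G_{12}''$, and $G''=G'$ minus one pendant edge, so combining the paper's Claim (which gives $\lambda(G_{12}'')-\lambda(G'') = [2-2/r+4(k-1)/(kr)]/n+O(n^{-2})$) with \autoref{bounded modification lemma} yields
\[
\lambda(G_{12}')-\lambda(G') \;=\; \frac{2k-4}{kr}\cdot\frac{1}{n}+O(n^{-2}).
\]
This is positive for $k\ge 3$ --- so your approach does work there, and is in fact simpler than the paper's --- but it \emph{vanishes} for $k=2$, for every $r\ge 3$. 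Your proposal contains no second-order analysis to decide the sign in that case, so the contradiction does not close.

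The paper sidesteps this by comparing $G$ not to $G_{12}$ but to an edge-extremal graph $H\in\mathrm{EX}(n,\mathcal F)$. One may assume $G\notin\mathrm{EX}(n,\mathcal F)$ (otherwise the conclusion of the main theorem holds for this $G$), so $e(H)\ge e(G)+1$; that extra edge contributes a guaranteed $+2/n$ via \autoref{bounded modification lemma}. Chaining $G\to G''\to G_{12}''\to H$ (and, for $k\ge 4$, invoking \autoref{tree comparison lemma} together with the hypothesis $Q<c_1(r)$), the paper obtains $\lambda(H)-\lambda(G)>0$, the desired contradiction. This global comparison to $H$, rather than the local comparison to $G_{12}$, is the missing ingredient in your plan.
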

\begin{proof}
    Suppose not, and assume without loss of generality that $w_1=w_2+2$. Let $w_2,\ldots,w_h=w_1-2$ and $w_{h+1},\ldots,w_r=w_1-1$. Let $G'$ be obtained from $G$ by converting every vertex in $W^m$ to a $W$-ordinary vertex, and let $a=e(G')-e(G)=O(1)$. Let $G''$ be obtained from $G'$ by deleting a pendant edge in $G'[W_1]$. Then $e(G'')=e(G)+a-1$, and $G''$ has an isolated vertex $u'$ in $G''[W_1]$. Let $\mu=\lambda(G'')$, $\rho=\lambda(G_{12}'')$, where $G_{12}''$ is obtained by transferring $u'$ to $W_2$; then $G_{12}''$ is $\mathcal F$-free and $e(G_{12}'')=e(G'')+1=e(G)+a$. Let $y$ be the principal eigenvector of $G''$ and $z$ be the principal eigenvector of $G_{12}''$, scaled so that $y_1:=\frac{1}{\mu}\sum_{v\not\in W_1}y_v=1$ and $z_1:=\frac{1}{\rho}\sum_{v\not\in W_1-\{u'\}}z_v=1$. Similarly to \autoref{spectral balance 4 claim 1}, we find $\rho-\mu=O(n^{-1})$.
    \begin{claim} \label{spectral balance 4.5 claim 2}
        $\rho-\mu=\left[2-\frac{2}{r}+\frac{4(k-1)}{kr}\right]\frac{1}{n}+O(n^{-2}).$
    \end{claim}
    \begin{proof}
        For $v\in W_1$ the second-degree eigenvector equation gives
        $$y_v=1+\frac{d_{W_1}(v)}{\mu}+\frac{d_{W_1}'(v)}{\mu^2}+O(n^{-3})$$
        and for $v\in W_1-\{u'\}$, gives
        $$z_v=1+\frac{d_{W_1}(v)}{\mu}+\frac{d_{W_1}'(v)}{\mu^2}+O(n^{-3}).$$
        Let $Y_1=\sum_{v\in W_1}y_v$, $Z_1=\sum_{v\in W_1-\{u'\}}z_v$. Then we obtain
        $$\begin{aligned}
            Y_1&=w_1+\frac{2(e_G(W_1)-1)}{\mu}+\frac{\sum_{v\in W_1-\{u'\}}d_{W_1}'(v)}{\mu^2}+O(n^{-2})\\
            Z_1&=Y_1-1+O(n^{-2}).
        \end{aligned}$$
        In particular $Y_1=w_1+\frac{2(k-1)}{k(r-1)}+O(n^{-1})$. The first-degree eigenvector equations now give that if $v\in W_i$ with $i\ge 2$, we have
        $$y_v=\begin{cases}
            1+\frac{Y_1-w_2}{\mu+w_2},&2\le i\le h\\
            1+\frac{Y_1-w_2-1}{\mu+w_2}-\frac{Y_1-w_2-1}{(\mu+w_2)^2}+O(n^{-3}),&i>h
        \end{cases}$$
        and if $v\in W_2\cup\{u'\}$ or $v\in W_i$ for $i>2$, then
        $$z_v=\begin{cases}
            1+\frac{Y_1-w_2-1}{\mu+w_2}+O(n^{-3}),&3\le i\le h\\
            1+\frac{Y_1-w_2-2}{\mu+w_2}-\frac{Y_1-w_2-2}{(\mu+w_2)^2}+O(n^{-3}),&i=2\text{ or }i>h.
        \end{cases}$$
        Thus, $\mu=\mu y_1$ and $\rho=\rho z_1$ give
        $$\begin{aligned}
            \mu=&\,(h-1)w_2\left(1+\frac{Y_1-w_2}{\mu+w_2}\right)\\
            &+(r-h)(w_2+1)\left(1+\frac{Y_1-w_2-1}{\mu+w_2}-\frac{Y_1-w_2-1}{(\mu+w_2)^2}\right)+O(n^{-2})\\
            \rho=&\,(h-2)w_2\left(1+\frac{Y_1-w_2-1}{\mu+w_2}\right)\\
            &+(r-h+1)(w_2+1)\left(1+\frac{Y_1-w_2-2}{\mu+w_2}-\frac{Y_1-w_2-2}{(\mu+w_2)^2}\right)+O(n^{-2}).
        \end{aligned}$$
        We now obtain an initial estimate on $\mu$. We have
        $$\begin{aligned}
            \mu=&\,(h-1)w_2+(r-h)(w_2+1)+\frac{(h-1)w_2\left(w_1-w_2+\frac{2(k-1)}{k(r-1)}\right)+O(1)}{\mu+w_2}\\
            &+\frac{(r-h)(w_2+1)\left(w_1-w_2-1+\frac{2(k-1)}{k(r-1)}\right)+O(1)}{\mu+w_2}+O(n^{-1})\\
            =&\,(r-1)w_2+r-h+\frac{h}{r}-\frac{2}{r}+1+\frac{2(k-1)}{kr}+O(n^{-1}).
        \end{aligned}$$
        Now
        $$\begin{aligned}
            \rho-\mu=&\,-w_2+w_2\frac{(h-2)(Y_1-w_2-1)-(h-1)(Y_1-w_2)}{\mu+w_2}\\
            &+w_2+1+(w_2+1)\frac{(r-h+1)(Y_1-w_2-2)-(r-h)(Y_1-w_2-1)}{\mu+w_2}\\
            &-(w_2+1)\frac{(r-h+1)(Y_1-w_2-2)-(r-h)(Y_1-w_2-1)}{(\mu+w_2)^2}+O(n^{-2})\\
            =&\,1+\frac{w_2(w_2-Y_1-h+2)}{\mu+w_2}\\
            &+\frac{(w_2+1)(Y_1-w_2-r+h-2)}{\mu+w_2}\\
            &-\frac{(w_2+1)(Y_1-w_2-r+h-2)}{(\mu+w_2)^2}+O(n^{-2})\\
            =&\,1-\frac{rw_2}{\mu+w_2}+\frac{Y_1-w_2-r+h-2}{\mu+w_2}-\frac{w_2(Y_1-w_2-r+h-2)}{(\mu+w_2)^2}+O(n^{-2}).
        \end{aligned}$$
        We estimate each term. Using the initial estimates of $\mu$ and $Y_1$, we have
        $$\begin{aligned}
            \frac{rw_2}{\mu+w_2}&=1-\left[r-h+\frac{h}{r}-\frac{2}{r}+1+\frac{2(k-1)}{kr}\right]\frac{1}{n}+O(n^{-2})\\
            \frac{Y_1-w_2-r+h-2}{\mu+w_2}&=\left[\frac{2(k-1)}{k(r-1)}-r+h\right]\frac{1}{n}+O(n^{-2})\\
            \frac{w_2(Y_1-w_2-r+h-2)}{(\mu+w_2)^2}&=\left[\frac{2(k-1)}{kr(r-1)}-1+\frac{h}{r}\right]\frac{1}{n}+O(n^{-1}).
        \end{aligned}$$
        So,
        $$\begin{aligned}\rho-\mu&=\left[-\frac{2}{r}+1+\frac{2(k-1)}{kr}+\frac{2(k-1)}{k(r-1)}-\frac{2(k-1)}{kr(r-1)}+1\right]\frac{1}{n}+O(n^{-2})\\
        &=\left[2-\frac{2}{r}+\frac{4(k-1)}{kr}\right]\frac{1}{n}+O(n^{-2}).\end{aligned}$$
    \end{proof}
    Note that $G_{12}''$ is obtained from $T_{n,r}$ by embedding trees of order $k$ in a large part, with possible 1 smaller tree. By \autoref{structure lemma 1} and applying a vertex transfer if necessary, some graph $H\in\mathrm{EX}(n,\mathcal F)$ is obtained from $T_{n,r}$ by embedding trees of order $k$ in a large part, then making a bounded number of edge modifications. There is a graph $H'$ obtained from $H$ by a bounded number of edge modifications and a graph $G_{12}'''$ obtained from $G_{12}''$ by a bounded number of edge modifications, so that each graph is obtained from $T_{n,r}$ by embedding the same number of trees of order $k$ in the large part; let $b=e(H)-e(H')=O(1)$, $a'=e(G_{12}''')-e(G_{12}'')=O(1).$ Now if $G\in\mathrm{EX}(n,\mathcal F)$ we are done, so we may assume
    $b+a'+a=e(H)-e(G)\ge 1$.\\
    \uline{Case 1:} $k\in\{1,2,3\}$. Then there is only one tree of order $k$, so $H'$ can be obtained from $G_{12}'''$ by a bounded number of edge modifications. By \autoref{bounded modification lemma}, we have
    $$\begin{aligned}
        \lambda(H)-\lambda(G)=&\,\lambda(H)-\lambda(H')+\lambda(H')-\lambda(G_{12}''')+\lambda(G_{12}''')-\lambda(G_{12}'')\\
        &+\lambda(G_{12}'')-\lambda(G'')+\lambda(G'')-\lambda(G)\\
        =&\,\left[2b+0+2a'+2-\frac{2}{r}+\frac{4(k-1)}{kr}+2(a-1)\right]\frac{1}{n}+O(n^{-2})>0
    \end{aligned}$$
    which gives a contradiction.\\
    \uline{Case 2:} $k\ge 4$.
    By \autoref{tree comparison lemma}, we have $\lambda(H')-\lambda(G_{12}''')\ge-\left[\frac{k-5+6/k}{r-1}\right]\frac{1}{n}+O(n^{-2})$. Then by \autoref{bounded modification lemma} we have
    $$\begin{aligned}
        \lambda(H)-\lambda(G)=&\,\lambda(H)-\lambda(H')+\lambda(H')-\lambda(G_{12}''')+\lambda(G_{12}''')-\lambda(G_{12}'')\\
        &+\lambda(G_{12}'')-\lambda(G'')+\lambda(G'')-\lambda(G)\\
        \ge&\,\left[2b-\frac{k-5+6/k}{r-1}+2a'+2-\frac{2}{r}+\frac{4(k-1)}{kr}+2(a-1)\right]\frac{1}{n}+O(n^{-2})\\
        \ge&\,\left[-\frac{k-5+6/k}{r-1}+2-\frac{2}{r}+\frac{4(k-1)}{kr}\right]\frac{1}{n}+O(n^{-2})
    \end{aligned}$$
    Now, the assumption $Q<c_1(r)$ and $1\le k\le1/(1-Qr)$ imply that $2-\frac{k-5+6/k}{r-1}-\frac{4(k-1)}{kr}>0$. Since $k>4/3$, we have $\frac{4(k-1)}{kr}-\frac{2}{r}>-\frac{4(k-1)}{kr}$ and so the bracketed expression above is positive, giving a contradiction.
\end{proof}

This establishes that $|w_i-w_j|\le 1$ for every $i,j$. We now complete the proof of \autoref{main theorem}. 

\begin{proof}[Proof of \autoref{main theorem}] As stated above, the assumption $Q<c_1(r)$ gives $2-\frac{k-5+6/k}{r-1}-\frac{4(k-1)}{kr}>0$, and we will use this estimate below. Let $H\in\mathrm{EX}(n,\mathcal F)$, with partition $V_1\sqcup\cdots \sqcup V_r$ according to \autoref{structure lemma 1}, and $v_i:=|V_i|$. If $v_1<v_i$ for some $i$ then $e(H_{i1})=e(H)$, so we may assume without loss of generality that $v_1\ge v_i$ for every $i$. Then either $w_1=v_1$ or $w_1=v_1-1$. Assume for a contradiction that $e(H)>e(G)$.\\
\uline{Case 1:} $w_1=v_1$. \\
\uline{Subcase 1.1:} $k\in\{1,2,3\}$. Then $H$ can be obtained from $G$ by a bounded number of edge deletions and additions, and by \autoref{bounded modification lemma} we are done.\\
\uline{Subcase 1.2:} $k\ge 4$. There is a graph $H'$ be obtained from $H$ by a bounded number of edge modifications and a graph $G'$ be obtained from $G$ by a bounded number of edge modifications such that $H'$ and $G'$ can be obtained from $T_{n,r}$ by embedding the same number of trees of order $k$ in the large part. Let $b=e(H)-e(H')=O(1)$, $a=e(G)-e(G')<b$. Then \autoref{tree comparison lemma} gives $\lambda(H')\ge\lambda(G')-\left[\frac{k-5+6/k}{r-1}\right]\frac{1}{n}+O(n^{-2})$. Hence, using \autoref{bounded modification lemma}
$$\begin{aligned}
    \lambda(H)-\lambda(G)&=\lambda(H)-\lambda(H')+\lambda(H')-\lambda(G')+\lambda(G')-\lambda(G)\\
    &\ge\left[2b-\frac{k-5+6/k}{r-1}-2a\right]\frac{1}{n}+O(n^{-2})\\
    &\ge\left[2-\frac{k-5+6/k}{r-1}\right]\frac{1}{n}+O(n^{-2}).
\end{aligned}$$
Since $2-\frac{k-5+6/k}{r-1}>0$, this gives a contradiction.\\
\uline{Case 2: $w_1=v_1-1.$} Then without loss of generality, $w_2=w_1+1=v_1$. Let $G'$ be obtained from $G$ by replacing every vertex in $W^m$ with a $W$-ordinary vertex. Let $a=e(G')-e(G)=O(1)$.
\begin{claim} \label{proof claim 1}
    We have $\lambda(G_{21}')-\lambda(G')=-\left[\frac{4(k-1)}{kr}\right]\frac{1}{n}+O(n^{-2}).$
\end{claim}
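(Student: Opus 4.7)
The plan is to follow the eigenvector-comparison template used in \autoref{spectral balance 4.5 claim 2}, but now without any edge deletion in the transfer. Let $\mu=\lambda(G')$ and $\rho=\lambda(G_{21}')$, with principal eigenvectors $y$ and $z$, scaled so that $y_1:=\mu^{-1}\sum_{w\notin W_1}y_w=1$ and $z_1:=\rho^{-1}\sum_{w\notin W_1\cup\{v^*\}}z_w=1$, where $v^*$ is the ordinary vertex added to $W_1$ in the transfer. A Rayleigh-quotient comparison---using that $G_{21}'$ and $G'$ have exactly the same number of edges, since $w_2=w_1+1$ forces the deleted vertex $u\in W_2^o$ and the added vertex $v^*$ to have equal degree $n-w_1-1$---yields $\rho-\mu=O(n^{-1})$ at the outset.

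Next I would derive the eigenweight expansions for vertices in $W_1$ and in $W_1\cup\{v^*\}$ via the second-degree eigenvector equation, as in \autoref{eigenweight estimate 5}, obtaining $y_w=1+d_{W_1}(w)/\mu+d'_{W_1}(w)/\mu^2+O(n^{-3})$ and the analogous expansion for $z_w$. Because the trees embedded in $W_1$ are identical in $G'$ and $G_{21}'$ and $v^*$ is isolated inside $W_1\cup\{v^*\}$, the partial sums satisfy $Z_1=Y_1+1+O(n^{-2})$; combining $e(W_1)=\frac{k-1}{k}w_1+O(1)$ with the initial estimate $\mu=\frac{r-1}{r}n+O(1)$ gives $Y_1-w_1=\frac{2(k-1)}{k(r-1)}+O(n^{-1})$.

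For $i\ge 2$, the first-degree eigenvector equations give the familiar formulas $y_v=1+(Y_1-w_i)/(\mu+w_i)+\cdots$ and $z_v=1+(Z_1-w_i')/(\rho+w_i')+\cdots$, where $w_i'$ is the new part size (equal to $w_i$ for $i\ge 3$ and to $w_2-1$ for $i=2$); I would split the sum according to whether $w_i=w_1$ or $w_i=w_1+1$. Then $\mu=\mu y_1$ and $\rho=\rho z_1$ give asymptotic expansions for $\mu$ and $\rho$ whose leading $O(1)$ terms must cancel (since $\rho-\mu=O(n^{-1})$), and after substituting the estimates for $\mu$ and $Y_1$ the remaining $O(n^{-1})$ contributions should collapse to $-\frac{4(k-1)}{kr}\frac{1}{n}$.

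The main obstacle is purely computational: the terms of order $n^0$ and $n^{-1}$ split into many pieces parameterized by the number of parts of each size, and one must carefully identify the surviving $O(n^{-1})$ contribution. The negative sign---in contrast to the positive value obtained in \autoref{spectral balance 4.5 claim 2}---reflects the fact that no pendant tree edge is deleted in this transfer; the only structural effect is that the part carrying the trees is enlarged, which dilutes their contribution to the spectral radius.
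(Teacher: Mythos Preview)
Your plan is correct and follows essentially the same route as the paper: the paper too transfers an ordinary vertex $u'$ from $W_2$ to $W_1$, scales $y,z$ so that $y_1=z_1=1$, obtains $Z_1=Y_1+1+O(n^{-2})$ and $Y_1-w_1=\frac{2(k-1)}{k(r-1)}+O(n^{-1})$, splits the parts by size $w_1$ versus $w_1+1$, derives an initial estimate for $\mu$, and then subtracts the two expansions of $\mu y_1$ and $\rho z_1$ to obtain exactly $-\frac{4(k-1)}{kr}\frac{1}{n}+O(n^{-2})$. The only thing left for you is the straightforward but lengthy arithmetic you flagged as the main obstacle.
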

\begin{proof}
    Let $u'$ be the $W$-ordinary vertex in $W_2$ which was transferred to obtain $G_{21}'$. Let $\mu=\lambda(G'),\rho=\lambda(G_{21}')$, with corresponding eigenvectors $y,z$, scaled so that $y_1:=\frac{1}{\mu}\sum_{v\not\in W_1}y_v=1$ and $z_1:=\frac{1}{\rho}\sum_{v\not\in W_1\cup\{u'\}}z_v=1$. Let $w_2,\ldots,w_h=w_1+1$ and $w_{h+1},\ldots,w_r=w_1$. Similarly to \autoref{spectral balance 4.5}, we obtain the following preliminary estimates. First, $\rho-\mu=O(n^{-1})$. For $v\in W_1$,
    $$\begin{aligned}y_v&=1+\frac{d_{W_1}(v)}{\mu}+\frac{d_{W_1}'(v)}{\mu^2}+O(n^{-3})\\
    z_v&=1+\frac{d_{W_1}(v)}{\mu}+\frac{d_{W_1}'(v)}{\mu^2}+O(n^{-3}).\end{aligned}$$
    Also $z_{u'}=1$. Let $Y_1=\sum_{v\in W_1}y_v,$ $Z_1=\sum_{W_1\cup\{u'\}}z_v$. Then
    $$\begin{aligned}
        Y_1&=w_1+\frac{2e(W_1)}{\mu}+\frac{\sum_{v\in W_1}d_{W_1}'(v)}{\mu^2}+O(n^2)\\
        &=w_1+\frac{2(k-1)}{k(r-1)}+O(n^{-1})\\
        Z_1&=Y_1+1+O(n^{-2}).
    \end{aligned}$$
    If $v\in W_i$ for $i\ge 2$, then
    $$y_v=\begin{cases}
        1+\frac{Y_1-w_1-1}{\mu+w_1}-\frac{Y_1-w_1-1}{(\mu+w_1)^2}+O(n^{-3}),&2\le i\le h\\
        1+\frac{Y_1-w_1}{\mu+w_1},&h+1\le i\le r.
    \end{cases}$$
    For $v\in W_2-\{u'\}$ or $v\in W_i$ with $i\ge 3$,
    $$z_v=\begin{cases}
        1+\frac{Y_1-w_1}{\mu+w_1}-\frac{Y_1-w_1}{(\mu+w_1)^2}+O(n^{-3}),&3\le i\le h\\
        1+\frac{Y_1-w_1+1}{\mu+w_1}+O(n^{-3}),&i=2\text{ or }h+1\le i\le r.
    \end{cases}$$
    So,
    $$\begin{aligned}
        \mu=&\,(h-1)(w_1+1)\left(1+\frac{Y_1-w_1-1}{\mu+w_1}-\frac{Y_1-w_1-1}{(\mu+w_1)^2}\right)\\
        &+(r-h)w_1\left(1+\frac{Y_1-w_1}{\mu+w_1}\right)+O(n^{-2})\\
        \rho=&\,(h-2)(w_1+1)\left(1+\frac{Y_1-w_1}{\mu+w_1}-\frac{Y_1-w_1}{(\mu+w_1)^2}\right)\\
        &+(r-h+1)w_1\left(1+\frac{Y_1-w_1+1}{\mu+w_1}\right)+O(n^{-2}).
    \end{aligned}$$
    We obtain the initial estimate
    $$\mu=(r-1)w_1+h-1+\frac{2(k-1)}{kr}-\frac{h}{r}+\frac{1}{r}+O(n^{-1}).$$
    Then,
    $$\begin{aligned}
        \rho-\mu=&\,-1+\frac{(w_1+1)(w_1-Y_1+h-1)}{\mu+w_1}\\
        &-\frac{(w_1+1)(w_1-Y_1+h-1)}{(\mu+w_1)^2}+\frac{w_1(Y_1-w_1+r-h+1)}{\mu+w_1}+O(n^{-2})\\
        =&\,-1+\frac{rw_1}{\mu+w_1}+\frac{w_1-Y_1+h-1}{\mu+w_1}-\frac{w_1(w_1-Y_1+h-1)}{(\mu+w_1)^2}+O(n^{-2}).
    \end{aligned}$$
    Now 
    $$\begin{aligned}
        \frac{rw_1}{\mu+w_1}&=1-\left[h-1+\frac{2(k-1)}{kr}-\frac{h}{r}+\frac{1}{r}\right]\frac{1}{n}+O(n^{-2})\\
        \frac{w_1-Y_1+h-1}{\mu+w_1}&=\left[-\frac{2(k-1)}{k(r-1)}+h-1\right]\frac{1}{n}+O(n^{-2})\\
        \frac{w_1(w_1-Y_1+h-1)}{(\mu+w_1)^2}&=\left[-\frac{2(k-1)}{kr(r-1)}+\frac{h}{r}-\frac{1}{r}\right]\frac{1}{n}+O(n^{-2}).
    \end{aligned}$$
    Thus,
    $$\rho-\mu=-\left[\frac{4(k-1)}{kr}\right]\frac{1}{n}+O(n^{-2}).$$
\end{proof}
\uline{Subcase 2.1:} $k\in\{1,2,3\}$. Then $H$ can be obtained from $G_{21}'$ by a bounded number of edge modifications. Let $b=e(H)-e(G_{21}')=e(H)-e(G')=e(H)-e(G)-a$, so $b+a\ge1$. Then by \autoref{bounded modification lemma} and \autoref{proof claim 1}, we have
$$\begin{aligned}
    \lambda(H)-\lambda(G)&=\lambda(H)-\lambda(G_{21}')+\lambda(G_{21}')-\lambda(G')+\lambda(G')-\lambda(G)\\
    &\ge\left[2b-\frac{4(k-1)}{kr}+2a\right]\frac{1}{n}+O(n^{-2})>0,
\end{aligned}$$
a contradiction.\\
\uline{Subcase 2.2:} $k\ge 4$. Let $H'$ be obtained from $H$ by deleting some $b=O(1)$ edges, and $G_{21}''$ be obtained from $G_{21}'$ by deleting some $a'=O(1)$ edges, so that $H'$ and $G_{21}''$ are each obtained by embedding the same number of trees of order $k$ in a large part of $T_{n,r}$. Note that $e(H)>e(G)$ gives $b+a-a'\ge 1$. Then by \autoref{bounded modification lemma}, \autoref{tree comparison lemma}, and \autoref{proof claim 1}, we have
$$\begin{aligned}
    \lambda(H)-\lambda(G)=&\,\lambda(H)-\lambda(H')+\lambda(H')-\lambda(G_{21}'')+\lambda(G_{21}'')-\lambda(G_{21}')\\
    &+\lambda(G_{21}')-\lambda(G')+\lambda(G')-\lambda(G)\\
    &\ge\left[2b-\frac{k-5+6/k}{r-1}-2a'-\frac{4(k-1)}{kr}+2a\right]\frac{1}{n}+O(n^{-2})\\
    &\ge\left[2-\frac{k-5+6/k}{r-1}-\frac{4(k-1)}{kr}\right]\frac{1}{n}+O(n^{-2}).
\end{aligned}$$
The bracketed expression is positive, which gives a contradiction.
\end{proof}

Observe that in Subcase 2.2 of the proof of \autoref{main theorem}, if $e(H)=e(G)+1$, $k\ge 4$, and all the trees embedded in $H$ are paths while all the trees embedded in $G$ are stars, then according to \autoref{tree comparison lemma}, the equality
$$\lambda(H)-\lambda(G)=\left[2-\frac{k-5+6/k}{r-1}-\frac{4(k-1)}{kr}\right]\frac{1}{n}+O(n^{-2})$$
holds. In the next section we will show that there is a forbidden family $\mathcal F$ such that some $H\in\mathrm{EX}(n,\mathcal F)$ and some $\mathcal F$-free graph $G$ in fact satisfy these conditions, and then use this to prove \autoref{counterexample 1}.

\section{Counterexamples}

\subsection{Large extremal number} \label{section large extremal number}
\begin{proof}[Proof of \autoref{counterexample 1}]
    Suppose that $r\ge 3$, $n\equiv 1\pmod r$ is large enough, and $\lfloor n/r\rfloor=(n-1)/r\equiv 0\pmod k$ (note there are infinitely many such $n$). Let $G$ be obtained by embedding $U(K_{1,k-1},\lfloor n/r\rfloor)$ in a small part of $T_{n,r}$ and let $H$ be obtained by embedding $U(P_k,\lceil n/r\rceil)$ in a large part of $T_{n,r}$ and then extending one of the paths by a single edge within the part, so that $e(H)=e(G)+1$. Let $V_1\sqcup\cdots\sqcup V_r$ and $U_1\sqcup\cdots\sqcup U_r$ be the corresponding partitions of $V(G)$ and $V(H)$, respectively, so that the trees are embedded in $V_1$ and $U_1$. Since $k>\alpha$ we have that
    $$\frac{1}{r-1}k+\frac{4}{r}-\frac{5}{r-1}+\left(\frac{6}{r-1}-\frac{4}{r}\right)k^{-1}>2$$
    and in particular $k\ge 4$. Thus, by the observation below the proof of \autoref{main theorem}, this implies that $\lambda(H)<\lambda(G)$. So, to prove \autoref{counterexample 1} it suffices to construct a family $\mathcal F$ such that $\mathrm{EX}(n,\mathcal F)=\{H\}$ and $G$ is $\mathcal F$-free. Let $m>\max\{4,k-1\}$ and let $\mathcal F$ be the union of the following graph families.
    \begin{itemize}
        \item[(1)] $T_k+T_{m(r-1),r-1}$ for every tree $T_k$ on $k$ vertices for which $T_k\not\simeq K_{1,k-1},P_k$
        \item[(2)] $T_{k+1}+T_{m(r-1),r-1}$ for every tree $T_{k+1}$ on $k+1$ vertices for which $T_{k+1}\not\simeq P_{k+1}$
        \item[(3)] $(2\cdot P_{k+1})+T_{m(r-1),r-1}$
        \item[(4)] $(K_{1,k-1}\cup P_{k+1})+T_{m(r-1),r-1}$
        \item[(5)] $C_\ell+T_{m(r-1),r-1}$ for $3\le\ell\le k+1$
        \item[(6)] $K_{r+2}$
    \end{itemize}
    \begin{claim}
        $H$ is $\mathcal F$-free.
    \end{claim}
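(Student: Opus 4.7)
The plan is to rule out each of the six types of forbidden graphs in $\mathcal F$ separately, exploiting the rigid structure of $H$: the induced subgraph $H[U_1]$ is a disjoint union of paths (many copies of $P_k$ and exactly one $P_{k+1}$, together with at most $k-1$ isolated vertices), so it has maximum degree at most $2$ and is bipartite, while each $U_i$ with $i\ge 2$ remains an independent set completely joined to all other parts.

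First I would eliminate type (6) and the odd-cycle instances of type (5) by a chromatic-number argument. Properly $2$-coloring $U_1$ (possible since $H[U_1]$ is bipartite) and assigning one further color to each $U_i$ $(i\ge 2)$ exhibits a proper $(r+1)$-coloring of $H$, so $\chi(H)\le r+1$; hence $K_{r+2}\not\subseteq H$, and for odd $\ell$ we have $\chi(C_\ell+T_{m(r-1),r-1})=3+(r-1)=r+2>r+1$.

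For each of the remaining forbidden graphs, which take the form $F+T_{m(r-1),r-1}$, I suppose toward a contradiction that $H$ contains such a subgraph, witnessed by disjoint $A,B\subseteq V(H)$ with $F\subseteq H[A]$, $T_{m(r-1),r-1}\subseteq H[B]$, and every $A$--$B$ edge present. Partition $B$ into the $r-1$ color classes $B_1,\dots,B_{r-1}$ of the embedded Tur\'an graph, each of size $m\ge k-1\ge 5$ (the hypothesis on $k$ and $r$ forces $k\ge 6$). Each $B_j$ is independent in $H$ and any two are completely joined. Since each $U_i$ is independent for $i\ge 2$ and $H[U_1]$, being a union of paths, contains no $K_{m,m}$ for $m\ge 2$, no two $B_j$'s share a part; hence the $r-1$ color classes occupy $r-1$ distinct parts of $H$. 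I then split into two sub-cases. In the generic sub-case, no $B_j$ lies in $U_1$: any $a\in A\cap U_i$ with $i\ge 2$ would fail to be adjacent to the $B_j$ sitting in $U_i$, so $A\subseteq U_1$, and $F$ embeds as a subgraph of $H[U_1]$. Since any connected subgraph on $k$ (resp.\ $k+1$) vertices of a disjoint union of paths is itself $P_k$ (resp.\ $P_{k+1}$), this eliminates the non-path trees of types (1) and (2); only one $P_{k+1}$ component is available, eliminating type (3); the star $K_{1,k-1}$ has a vertex of degree $k-1\ge 5>2$, exceeding the maximum degree of $H[U_1]$ and eliminating type (4); and no cycle embeds in the acyclic $H[U_1]$, eliminating the even-cycle instances of type (5). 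In the exceptional sub-case some $B_1\subseteq U_1$: any $a\in A\cap U_1$ would require $B_1\subseteq N_{H[U_1]}(a)$, which is impossible since $|B_1|=m\ge 5$ exceeds the maximum degree $2$ of $H[U_1]$. Thus $A\cap U_1=\emptyset$, and $A$ is forced into the unique unused part $U_{i_*}$ with $i_*\ge 2$, which is independent; so $F$ has no edges, contradicting the presence of an edge in each forbidden $F$.

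The step I expect to demand the most care is the case-by-case verification in the generic sub-case, where one must rule out, for each listed $F$, an embedding into a disjoint union of $P_k$'s plus a single $P_{k+1}$. This is clean once one invokes the bound $k\ge 6$ implied by the hypothesis, since it simultaneously prevents $K_{1,k-1}$ from sitting inside any $P_k$-component and makes $m\ge k-1\ge 5$ large enough to collapse the exceptional sub-case immediately.
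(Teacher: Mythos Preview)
Your overall plan is sound, the chromatic-number disposal of $K_{r+2}$ and of the odd-$\ell$ instances of (5) is clean, and the checks that no listed $F$ embeds in $H[U_1]$ are all correct. However, the pivotal structural step has a genuine gap: you assert that ``each $B_j$ is independent in $H$'', but the color classes $B_1,\dots,B_{r-1}$ are independent only in the embedded Tur\'an graph, and $H[B]$ may well contain extra edges. Concretely, for $r=3$ take $B_1$ to be an adjacent pair $u_1u_2$ on a path in $U_1$ together with $m-2$ vertices of $U_2$, and $B_2$ any $m$ vertices of $U_3$; then $B_1$ is complete to $B_2$ in $H$, so $K_{m,m}\subseteq H[B]$, yet $B_1$ spans two parts and is not independent in $H$. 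Your dichotomy ``no $B_j$ lies in $U_1$'' versus ``some $B_j\subseteq U_1$'' therefore does not exhaust all configurations, and the sub-case analysis built on it is incomplete.

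The paper avoids this by never localizing the color classes. It instead bounds $|V(K)\cap U_i|\le m$ for every $i$: for $i\ge2$ the set $V(K)\cap U_i$ is independent in $K$, hence no larger than a single color class; for $i=1$ the bound $\Delta(H[U_1])\le2$ forces $K[V(K)\cap U_1]$ to have maximum degree at most $2$, which also caps its order at $m$. These bounds show $V(K)$ cannot be confined to only $r-2$ parts, and a short case split on $|V(K)\cap U_1|$ (at most $2$ versus at least $3$) then forces $V(F)\subseteq U_1$, after which your verifications apply verbatim. Your argument can be repaired along these lines.
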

    \begin{proof}
        Consider any copy $K$ of $T_{m(r-1),r-1}$ which appears in $H$ and assume for a contradiction that a forbidden subgraph of the form $F+K$ appears. If $V(K)\subseteq V-U_1$, then $N(V(K))=U_1$, so no forbidden graph of the form $F+K$ appears, by the definition of $H[U_1]$. Otherwise, $V(K)\ni v\in U_1$. If $|V(K)\cap U_1|\ge 3$, then $N(V(K)\cap U_1)\cap U_1=\emptyset$, hence $V(F)\cap U_1=\emptyset$. Then $V(F)$ intersects two different parts $U_2,U_3$, implying that $K$ is contained in the $r-2$ parts $U_1,U_4,\ldots,U_r$. But one can check $|V(K)\cap U_i|\le m$ since $\Delta(H[U_i])\le 2< m$ so $|V(K)|\le(r-2)m$, a contradiction. Thus, $|V(K)\cap U_1|\le 2$, so $|V(K)-U_1|\ge m(r-1)-2$, and thus $V(K)\cap U_i\ne\emptyset$ for each $i$ as $m(r-2)+2<m(r-1)-2$. Hence, $N(V(K))\subseteq U_1$, and the previous argument applies. This covers forbidden subgraphs of forms (1) through (5). Finally, if $H$ contains a copy $K$ of $K_{r+2}$, then $|V(K)\cap U_i|\le 1$ for each $i>1$ and $|V(K)\cap U_1|\le 2$. Hence $|V(K)|\le r+1$, a contradiction.
    \end{proof}
    \begin{claim}
        $G$ is $\mathcal F$-free.
    \end{claim}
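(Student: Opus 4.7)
The plan is to mirror the structure of the preceding proof that $H$ is $\mathcal{F}$-free, adapted to the structure of $G$. I take an arbitrary copy $K$ of $T_{m(r-1),r-1}$ in $G$ and argue that no $F$ for which $F+K$ is a forbidden subgraph can fit into $G-V(K)$ with complete bipartite connection to $V(K)$. The first observation is that each of the $r-1$ parts of $K$ is an independent set of $G$, and since $G$ is complete multipartite between the parts $V_1,\ldots,V_r$, each $K$-part sits entirely inside a single $V_j$. For $j\ge 2$, $V_j$ itself is independent in $G$, so two $K$-parts cannot share a single $V_j$ (once $m\ge 2$); for $j=1$, $G[V_1]$ has maximum degree $k-1$, and for $m$ large enough the same conclusion holds. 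Thus the $r-1$ parts of $K$ occupy $r-1$ distinct $V_j$'s, leaving exactly one ``missing'' part $V_{j_0}$ with $V(K)\cap V_{j_0}=\varnothing$.

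I then split into two cases. If $V(K)\cap V_1=\varnothing$, then $V_{j_0}=V_1$ and $N(V(K))=V_1$, so any $F$ of a forbidden type must embed in $G[V_1]$, a disjoint union of $K_{1,k-1}$'s on $k$ vertices each. Types (1)--(4) are ruled out because the only connected $k$-vertex subgraph of a $K_{1,k-1}$-component is $K_{1,k-1}$ itself (excluded from type~(1)), and the components are too small to host a tree on $k+1$ vertices. Type~(5) is ruled out because $G[V_1]$ is a forest. If instead $V(K)\cap V_1\ne\varnothing$, then an entire $K$-part lies in $V_1$, forming an independent set of size $m$ in $G[V_1]$; since every vertex of $V_1$ has at most $k-1$ neighbors inside $V_1$, for $m$ sufficiently large no vertex of $V_1$ can be adjacent to all $m$ of these, and hence $N(V(K))\cap V_1=\varnothing$. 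Together with $V(K)\cap V_j\ne\varnothing$ for every $j\ne j_0$, this forces $N(V(K))\subseteq V_{j_0}$, an independent set of $G$. Consequently $G[N(V(K))]$ is edgeless and contains no forbidden $F$ from types (1)--(5).

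Type (6) is handled directly: in any clique of $G$, at most two vertices lie in $V_1$ (and they must form an edge of some $K_{1,k-1}$, namely a center together with one of its leaves) and at most one lies in each $V_j$ with $j\ge 2$, so the clique number of $G$ is at most $r+1$ and $K_{r+2}\not\subseteq G$. The only delicate point is the bookkeeping asserting that each $K$-part lies entirely within a single $V_j$ and that at most one $K$-part lies in $V_1$; both rely on the bounded degree $\Delta(G[V_1])\le k-1$ together with $m$ being taken sufficiently large relative to $k$. With these in hand the case analysis collapses in direct parallel to the argument given for $H$.
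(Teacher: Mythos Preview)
Your argument has a genuine gap at its very first step. You assert that ``each of the $r-1$ parts of $K$ is an independent set of $G$, and since $G$ is complete multipartite between the parts $V_1,\ldots,V_r$, each $K$-part sits entirely inside a single $V_j$.'' This is not justified. A copy of $T_{m(r-1),r-1}$ in $G$ is a (not necessarily induced) subgraph, so a part $P$ of $K$ is independent in $K$ but may well fail to be independent in $G$: nothing prevents $P$ from containing vertices $u\in V_a$ and $v\in V_b$ with $a\ne b$, in which case $uv\in E(G)$ while $uv\notin E(K)$. Consequently the conclusion that the $r-1$ parts of $K$ occupy $r-1$ distinct classes $V_j$ (leaving exactly one ``missing'' class $V_{j_0}$) does not follow, and the case split built on it collapses.

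The paper avoids this pitfall by never attempting to localise the $K$-parts. Instead it argues directly about the cardinalities $|V(K)\cap V_i|$: first it rules out $|V(K)\cap V_1|\ge k$ by observing that then $N(V(K))\cap V_1=\varnothing$, forcing $V(F)$ to meet two classes $V_2,V_3$, which in turn confines $V(K)$ to only $r-2$ classes and contradicts $|V(K)|=m(r-1)$ via the bound $|V(K)\cap V_i|\le m$ coming from $\Delta(G[V_i])\le k-1$. With $|V(K)\cap V_1|\le k-1$ established, a simple count shows $V(K)$ meets every $V_i$, so $N(V(K))\subseteq V_1$ and the analysis of $G[V_1]$ finishes the job. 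Your treatment of the $K_{r+2}$ case and of what can embed in $G[V_1]$ is fine; the problem is solely the unjustified structural claim about how the color classes of $K$ sit inside the $V_j$.
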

    \begin{proof}
        Consider any copy $K$ of $T_{m(r-1),r-1}$ which appears in $G$. If $V(K)\subseteq V-V_1$, then $N(V(K))=V_1$, so no forbidden graph of the form $F+K$ appears, by the definition of $G[V_1]$. Otherwise, $V(K)\ni v\in V_1$. If $|V(K)\cap V_1|\ge k$, then $N(V(K)\cap V_1)\cap V_1=\emptyset$, hence $V(F)\cap V_1=\emptyset$. Then $V(F)$ intersects two different parts $V_2,V_3$, implying that $K$ is contained in the $r-2$ parts $V_1,V_4,\ldots,V_r$. But one can check $|V(K)\cap V_i|\le m$ since $\Delta(G[V_i])\le k-1< m$ so $|V(K)|\le (r-2)m$, a contradiction. Thus, $|V(K)\cap V_1|\le k-1$, so $|V(K)-V_1|\ge m(r-1)-k+1$, and thus $V(K)\cap V_i\ne\emptyset$ for each $i$ since $m\ge k-1$. Hence $N(V(K))\subseteq V_1$, and the previous argument applies. This covers forbidden subgraphs of forms (1) through (5). Finally, if $G$ contains a copy $K$ of $K_{r+2}$, then $|V(K)\cap U_i|\le 1$ for each $i>1$ and $|V(K)\cap V_1|\le 2$. Hence $|V(K)|\le r+1$, a contradiction.
    \end{proof}
    \begin{claim} \label{claim extremal graph}
        If $n$ is large enough we have $\mathrm{EX}(n,\mathcal F)=\{H\}$.
    \end{claim}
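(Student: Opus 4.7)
The plan is to apply \autoref{structure lemma 1} to reduce any extremal $\mathcal F$-free graph to a canonical form, then compare edge counts to pin down $H$. First I would verify $k(\mathcal F)=k$: the graph $U(P_k,m)+T_{m(r-1),r-1}$ is $\mathcal F$-free by direct inspection of (1)--(6) (every $k$-vertex tree subgraph is a $P_k$, no subtree has order $k+1$, the maximum degree in each component is $2$ so no $K_{1,k-1}$ appears for $k\ge 4$, and there are no short cycles or $K_{r+2}$), giving $k(\mathcal F)\ge k$; conversely $U(T_{k+1},m)+T_{m(r-1),r-1}$ is forbidden either by (2) when $T_{k+1}\not\simeq P_{k+1}$ or by (3) when $T_{k+1}\simeq P_{k+1}$, so $k(\mathcal F)=k$.

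Next I would invoke \autoref{structure lemma 1}: after an edge-preserving modification, every extremal graph is obtained from $T_{n,r}$ by embedding trees of order $k$ in a single part $W_1$ and performing $O(1)$ edge deletions and additions, with the resulting partition satisfying $|w_i-w_j|\le 1$. By (1) the embedded $k$-vertex trees are restricted to $K_{1,k-1}$ and $P_k$. I would then analyze the allowed $O(1)$ modifications: any added edge inside $W_i$ for $i\ne 1$ combines with an embedded edge in $W_1$ together with a representative from each other part to produce a $K_{r+2}$, forbidden by (6); the sole allowed ``upgrade'' in $W_1$ is to attach a leftover isolated vertex to an endpoint of an embedded $P_k$, extending it to $P_{k+1}$, and by (3)/(4) at most one such $P_{k+1}$ can appear and only in a $W_1$ whose remaining embedded trees are all $P_k$s; (5) rules out short cycles.

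Having constrained the structure, I would conclude by an edge count. When $w_1=jk$ (small part) no isolated vertex is available and the clean embedding contributes $j(k-1)$ extra edges. When $w_1=jk+1$ (large part) one isolated vertex remains, and absorbing it into a $P_{k+1}$ extension adds exactly one extra edge, yielding $j(k-1)+1$ extra edges in total; this configuration is precisely $H$. Hence the unique extremal graph is $H$ up to isomorphism.

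The principal obstacle will be the modification case analysis: the structure lemma permits $O(1)$ arbitrary edge edits, so one must carefully rule out every alternative to the single $P_{k+1}$ extension. The most delicate scenarios involve merging two or more embedded $P_k$s (via edges routed through the isolated vertex) into a longer path or cycle, which could nominally save vertices and pick up extra edges. These must each be shown either to trigger (2) (whenever merging occurs at an interior vertex of a $P_k$, producing a non-path tree on $k+1$ vertices), (3) or (4) (whenever two $P_{k+1}$ subgraphs appear, or a $P_{k+1}$ coexists with a $K_{1,k-1}$), or (5) (whenever a short cycle is formed), or else to fall strictly short of the $P_{k+1}$-extension gain. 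The part-balance consequence $|w_i-w_j|\le 1$ used along the way follows from the same vertex-shift reasoning that underlies \autoref{OVXT edge}.
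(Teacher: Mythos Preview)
Your approach mirrors the paper's: invoke \autoref{structure lemma 1}, use (6) to kill edges outside $W_1$, use (1), (2), (5) to constrain the components in $W_1$, use (3) and (4) to cap the number of $P_{k+1}$s and exclude $K_{1,k-1}$ once a $P_{k+1}$ is present, and finish with an edge count showing the large part wins. The paper does not compute $k(\mathcal F)$ explicitly but argues directly from the forbidden subgraphs; this difference is cosmetic.

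There is, however, a genuine gap that your proposal shares with the paper's own proof: neither argument rules out a single long path $P_{k+s}$ with $2\le s\le k$ as a component in $W_1$. Such a path has only $P_{k+1}$ as a $(k{+}1)$-vertex subtree, so (2) does not forbid it; it contains no two disjoint copies of $P_{k+1}$ when $s\le k$, so (3) does not forbid it; and its maximum degree is $2$, so (4) does not forbid it for $k\ge 4$. Concretely, with $w_1=jk+1$, the configuration $P_{k+2}\cup(j-2)\cdot P_k\cup P_{k-1}$ in $W_1$ is $\mathcal F$-free and has exactly $(k+1)+(j-2)(k-1)+(k-2)=j(k-1)+1$ embedded edges, matching $H$. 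So $\mathrm{EX}(n,\mathcal F)\ne\{H\}$, and the claim is false as stated. Your flagged concern about ``merging two or more embedded $P_k$s into a longer path'' is precisely the failure point: contrary to your outline, these longer-path alternatives neither trigger any forbidden subgraph nor fall strictly short in edge count. A clean repair is to enlarge $\mathcal F$ by also forbidding $P_{k+2}+T_{m(r-1),r-1}$; this forces every component in $W_1$ to have order at most $k+1$ (as the paper asserts from (2) alone), after which both your argument and the paper's go through, and one checks easily that $H$ and $G$ remain $\mathcal F$-free.
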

    \begin{proof}
        Let $H'\in\mathrm{EX}(n,\mathcal F)$. Following \autoref{section structure of edge extremal} through \autoref{balance 2} shows that $H'$ is obtained from $K_{W_1,\ldots,W_r}$ by embedding trees in $W_1$, where $|w_i-w_j|\le 1$ for $i,j>1$ and $w_1-2\le w_i\le w_1+1$ for $i>1$. Suppose there are $h$ parts of size $w_1-2$ and $r-h-1$ parts of size $w_1-1$, so that
        $$\begin{aligned}
            w_1+h(w_1-2)+(r-h-1)(w_1-1)=n\Longrightarrow
            w_1=\frac{n-1}{r}+1+\frac{h}{r}.
        \end{aligned}$$
        Now $n\equiv 1\pmod r$, the right-hand side is an integer, and $0\le h\le r-1$, so we have $h=0$, and in fact $K_{W_1,\ldots,W_r}=T_{n,r}$. By (1) through (5), $H'[W_1]$ is a disjoint union of trees, each of which is $P_{k+1}$, $P_k$, $K_{1,{k-1}}$, or of order smaller than $k$. Moreover, at most one tree is $P_{k+1}$ and such a tree can only exist if there is no $K_{1,k-1}$. From these restrictions and the fact that $\lfloor n/r\rfloor\equiv 0\pmod k$ it is clear that $e(H'[W_1])$ (hence $e(H')$) is maximized exactly when $H'\simeq H$.
    \end{proof}
    By the discussion above, this completes the proof of \autoref{counterexample 1}.
\end{proof}

\subsection{Chromatic number 3} \label{section chromatic number 3}

\begin{proof}[Proof of \autoref{counterexample 2}]
Let $\mathcal F$ consist of $K_4$ and all graphs of the form $T_4+(m\cdot K_1)$, where $T_4$ is a tree on four vertices. (Here $m$ is a sufficiently large constant.) Now $\chi(F)=3$ and $\mathcal F$ satisfies the assumptions of \autoref{structure lemma 1} with $Q=\frac{1}{3}$. Let $n=2p$, where $p\equiv 1\pmod 3$. Let $G$ be the graph obtained by embedding $(p-1)/3\cdot P_3$ in the smaller part of $K_{p-1,p+1}$; let $H$ be the graph obtained by embedding $(p-1)/3\cdot P_3$ in one part of $K_{p,p}$, let $H'$ be the graph obtained by embedding $((p-4)/3\cdot P_3)\cup (2\cdot P_2)$ in one part of $K_{p,p}$, and let $H''$ be the graph obtained by embedding $((p-1)/3\cdot P_3)\cup P_2$ in the large part of $K_{p+1,p-1}$. Similarly to \autoref{claim extremal graph} one can prove $\mathrm{EX}(n,\mathcal F)=\{H,H',H''\}$. Now $G,H,H',H''$ have equitable partitions with quotient matrices
$$\begin{aligned}
    B=\begin{pmatrix}
    0&2&p+1\\
    1&0&p+1\\
    \frac{p-1}{3}&\frac{2(p-1)}{3}&0
\end{pmatrix},&\ \ C=\begin{pmatrix}
    0&2&0&p\\
    1&0&0&p\\
    0&0&0&p\\
    \frac{p-1}{3}&\frac{2(p-1)}{3}&1&0
\end{pmatrix},\\ C'=\begin{pmatrix}
    0&2&0&p\\
    1&0&0&p\\
    0&0&1&p\\
    \frac{p-4}{3}&\frac{2(p-4)}{3}&4&0
\end{pmatrix},&\ \ C''=\begin{pmatrix}
    0&2&0&p\\
    1&0&0&p\\
    0&0&1&p\\
    \frac{p-4}{3}&\frac{2(p-4)}{3}&4&0
\end{pmatrix},
\end{aligned}$$
respectively. We find that
$\lambda(H),\lambda(H'),\lambda(H'')<p+\frac{2}{3}-\frac{1}{5p}<\lambda(G)$ by substituting this value into the characteristic polynomials. Since $G\not\in\mathrm{EX}(n,\mathcal F)$, this implies that $\mathrm{SPEX}(n,\mathcal F)\cap\mathrm{EX}(n,\mathcal F)=\emptyset$.
\end{proof}

\section{Proof of \autoref{corollary}}
    First we show that $\alpha$ is not an integer. We have
    $$\begin{aligned}\alpha&=\frac{r-1}{2}\left(2+\frac{5}{r-1}-\frac{4}{r}+\sqrt{\left(2+\frac{5}{r-1}-\frac{4}{r}\right)^2-\frac{4}{r-1}\left(\frac{6}{r-1}-\frac{4}{r}\right)}\right)\\
    &=r-1+\frac{5}{2}-2+\frac{2}{r}+\sqrt{\left(2+\frac{5}{r-1}-\frac{4}{r}\right)^2\frac{(r-1)^2}{4}-(r-1)\left(\frac{6}{r-1}-\frac{4}{r}\right)}\\
    &=\frac{2r(r-1)+5r-4r+4}{2r}+\sqrt{\left(\frac{2r^2-r+2}{2r}\right)^2-\frac{5}{r}+\frac{3}{r^2}}.\end{aligned}$$
    For $i\in\mathbb Z$ let $s_i=(i/2r)^2$ and let $S=\{s_i:i\in\mathbb Z\}$. Note that $s_{2r^2-r+2}-s_{2r^2-r+1}=1-\frac{1}{2r}-\frac{3}{4r^2}.$ Whenever $r\ge 6$, we have
    $$0<\frac{5}{r}+\frac{3}{r^2}<1-\frac{1}{2r}-\frac{3}{4r^2}$$
    so the expression in the square root above is not an element of $S$ and therefore $\alpha$ is not an integer. When $r=3,4,5$, one may check directly that $\alpha$ is not an integer.

    Now let $k=\lceil\alpha\rceil>\alpha$. \autoref{counterexample 1} gives that $c(r)\le\frac{k-1}{kr}$. Suppose $Q<\frac{k-1}{kr}$. Let $\mathcal F$ be any finite family of graphs with $\chi(\mathcal F)=r+1$ and $\mathrm{ex}(n,\mathcal F)\le e(T_{n,r})+Qn$ for large enough $n$. Then we have
    $$\begin{aligned} k'&:=k(\mathcal F)\le\left\lfloor\frac{1}{1-Qr}\right\rfloor\le k-1\\
    &<\frac{r-1}{2}\left(2+\frac{5}{r-1}-\frac{4}{r}+\sqrt{\left(2+\frac{5}{r-1}-\frac{4}{r}\right)^2-\frac{4}{r-1}\left(\frac{6}{r-1}-\frac{4}{r}\right)}\right)\end{aligned}$$
    and so $\frac{k'-1}{k'r}<c_1(r)$. Moreover, \autoref{structure lemma 1} gives that $\mathrm{ex}(n,\mathcal F)\le e(T_{n,r})+\frac{k'-1}{k'r}n+O(1)$ for $n$ large enough, as $k'=k(\mathcal F)$. Hence, we may apply \autoref{main theorem} to show that $\mathrm{SPEX}(n,\mathcal F)\subseteq\mathrm{EX}(n,\mathcal F)$ for $n$ large enough. Therefore, $c(r)\ge Q$.\qed

\section{Concluding remarks}

\autoref{main theorem} gives a sharp condition on the value of $\mathrm{ex}(n,\mathcal F)$ which guarantees that $\mathrm{SPEX}(n,\mathcal F)\subseteq\mathrm{EX}(n,\mathcal F)$, when $\chi(\mathcal F)=r+1\ge 4$. As shown by \autoref{counterexample 2}, \autoref{main theorem} does not hold if we allow $r=2$ and $k(\mathcal F)=3$. We attempted to provide a similar counterexample in the case of $r=2,k(\mathcal F)=2$ but were unable to do so because $\lambda(H)$ is in fact greater than $\lambda(G)$ by a term of $\Theta(n^{-2})$, where $G$ and $H$ are graphs defined analogously to the ones in \autoref{section chromatic number 3}. This fact, combined with the result of \cite{wang2023conjecture}, suggested \autoref{conjecture}. A solution to \autoref{conjecture} would be interesting as it potentially requires stronger techniques than those of this paper and those of \cite{wang2023conjecture}.

We believe that the methods of this paper are general enough to apply to many specific families $\mathcal F$ which do not satisfy the assumptions of \autoref{main theorem}. Even for $\mathcal F$ such that $\mathrm{SPEX}(n,\mathcal F)\not\subseteq\mathrm{EX}(n,\mathcal F)$, large parts of the proof of \autoref{main theorem} go through and may complete much of the work in determining the extremal graphs.

Besides \autoref{conjecture}, the open questions we are most interested in are the following.
\begin{itemize}
    \item[(1)] Can the assumption that $\mathcal F$ is finite be dropped from \autoref{main theorem}?
    \item[(2)] What is the smallest value of $|\mathcal F|$ for which an example proving \autoref{counterexample 1} can be constructed?
\end{itemize}

\section{Acknowledgements}

The author would like to thank Michael Tait for suggesting the problem and for useful discussions which improved the quality of the paper. This research was partially supported by NSF DSM-2245556.

\printbibliography

\end{document}